\title{Connecting conformal dimension and Poincar\'e profiles}
\author{David Hume}
\address{School of Mathematics, University of Birmingham, Edgbaston, UK.}
\email{d.hume@bham.ac.uk}
\author{John M. Mackay}
\address{School of Mathematics, University of Bristol, Bristol, UK.}
\email{john.mackay@bristol.ac.uk}
\date{\today}
\numberwithin{equation}{section}
\newtheorem{theorem}[equation]{Theorem}
\newtheorem{proposition}[equation]{Proposition}
\newtheorem{corollary}[equation]{Corollary}
\newtheorem{lemma}[equation]{Lemma}
\newtheorem{question}[equation]{Question}
\newtheorem{definition}[equation]{Definition}
\newtheorem{remark}[equation]{Remark}
\newtheorem*{theorem*}{Theorem}
\newtheoremstyle{citing}
  {3pt}
  {3pt}
  {\itshape}
  {}
  {\bfseries}
  {}
  {.5em}
  {\thmnote{#3}}
\theoremstyle{citing}
\newtheorem*{varthm}{}
\DeclareMathOperator{\diam}{diam}
\DeclareMathOperator{\cut}{cut}
\DeclareMathOperator{\sep}{sep}
\DeclareMathOperator{\CAT}{CAT}
\DeclareMathOperator{\GL}{GL}
\DeclareMathOperator{\Cone}{Con}
\DeclareMathOperator{\St}{St}
\DeclareMathOperator{\Confdim}{Confdim}
\newcommand{\bdry}{\partial_\infty}
\newcommand{\setcon}[2]{\left\{#1\ \left|\ #2\right.\right\}}
\newcommand{\cC}{\mathcal{C}}
\newcommand{\R}{\mathbb{R}}
\newcommand{\C}{\mathbb{C}}
\newcommand{\N}{\mathbb{N}}
\newcommand{\Z}{\mathbb{Z}}
\newcommand{\HH}{\mathbb{H}}
\newcommand{\ba}{\mathbf{a}}
\newcommand{\bbb}{\mathbf{b}}
\newcommand{\bc}{\mathbf{c}}
\newcommand{\pcrit}{p_{\Lambda}}
\def\XXint#1#2#3{{\setbox0=\hbox{$#1{#2#3}{\int}$}
\vcenter{\hbox{$#2#3$}}\kern-.5\wd0}}
\numberwithin{equation}{section}
\begin{document}

\begin{abstract} 
	We strengthen the connection between the Ahlfors-regular (AR) conformal dimension $\Confdim(Z)$ of a compact AR metric space $Z$ and a certain critical exponent of the Poincar\'e profiles $\pcrit$ of its hyperbolic cone $X$ in the sense of Bonk--Schramm. We prove that the two values are equal in two situations: firstly, when $Z$ is a product $C\times [0,1]$ where $C$ is a compact AR metric space; and secondly when $X$ is quasi-isometric to a Heintze manifold $\R^n\rtimes_A\R$ where $A\in\GL(n,\R)$ is diagonalisable.  A key tool is a lower bound for $\pcrit$ for combinatorial round trees which also applies to various random group models and families of Coxeter groups.

	We also show that for a torsion free hyperbolic group $G$, $\pcrit(G)>1$ if and only if Benjamini--Schramm--Tim\'ar's separation profile grows faster than $r^\alpha$ for some $\alpha>0$, if and only if $\Confdim(\bdry G)>1$.  On the other hand, we find new, non-virtually-Fuchsian examples of groups with the same separation profile as $\mathbb{H}^2$.
	All these results imply various obstructions to coarse and regular embeddings of such groups.  
\end{abstract}

\maketitle

\section{Introduction}
\label{sec:intro}

There is a classical correspondence between isometries of hyperbolic spaces and M\"obius transformations of their sphere at infinity.
This was generalized by Paulin and Bonk--Schramm to give a correspondence between quasi-isometries of Gromov hyperbolic spaces and quasisymmetric homeomorphisms of their boundaries, making it natural to seek to relate the large-scale geometric properties of Gromov hyperbolic spaces with analytic properties of their boundaries and vice-versa.

One highly studied quasisymmetry invariant of compact metric spaces is Pansu's conformal dimension \cite{Pan-89-cdim}. In previous work of the authors and Romain Tessera, 
we demonstrated that a critical exponent of ``Poincar\'e profiles'', that is,  Poincar\'e inequalities on bounded subsets, equals the conformal dimension of the boundary at infinity for certain Gromov hyperbolic spaces including classical hyperbolic spaces~\cite{HumeMackTess-Pprof, HumeMackTess-PprofLie}. There are two key benefits of working with the critical exponent of Poincar\'e profiles: it is defined for all proper, geodesic metric spaces; and, in addition to being invariant under quasi-isometries, it behaves monotonically with respect to a much broader class of maps called regular maps, which include coarse embeddings, despite the fact that these maps typically do not have good boundary extensions.

The goal of this paper is to further develop our understanding of the relationship between this critical exponent and conformal dimension, in particular determining situations where the two values agree, and where existing bounds on conformal dimension can be reproved using Poincar\'e profiles.

As many lower bounds on conformal dimension are proved by finding embedded ``round trees'', we start by studying their Poincar\'e profiles. We use this to determine many situations where the conformal dimension of the boundary of a hyperbolic space agrees with a critical exponent for the Poincar\'e profiles, with consequent non-embedding results.

We also establish two new phenomena for hyperbolic groups around conformal dimension $1$.  First, we use (irregular) round trees to show that torsion-free hyperbolic groups with conformal dimension $>1$ have $L^1$-Poin\-ca\-r\'e profiles, i.e.\ separation profiles \cite{BenSchTim-12-separation-graphs}, which grow $\gtrsim r^{\epsilon}$ for some $\epsilon>0$.
Second, we show that many hyperbolic groups whose boundaries have conformal dimension $1$, but do not attain this value, nevertheless have the same Poincar\'e profiles as the hyperbolic plane.

Let us now describe our results in more detail.
\subsection{Main concepts}
We recall the definition of the Poincar\'e profiles of a graph.
\begin{definition}[{\cite[\S 1.1]{HumeMackTess-Pprof}}]
	\label{def:poinc-profile}
	For a finite graph $\Gamma$ with vertex set $V(\Gamma)$ and edge set $E(\Gamma)$, for $p \in [1,\infty]$ its \emph{$L^p$-Poincar\'e constant} is
	\[
		h^p(\Gamma) = \inf \left\{\left. \frac{\|\nabla f\|_p}{\|f\|_p} \ \right|\ f:V(\Gamma) \to \R, \sum_{v \in V(\Gamma)} f(v)=0, f \not\equiv 0\right\}.
	\]
	Here, $|\nabla f|:E(\Gamma)\to \R$ is defined by $|\nabla f|(e) = |f(x)-f(y)|$ where $e \in E(\Gamma)$ has endpoints $x,y \in V(\Gamma)$.

	For an infinite bounded degree graph $X$, and $p \in [1,\infty]$, its \emph{$L^p$-Poincar\'e profile} $\Lambda_X^p:\N\to\R$ is
	\[
		\Lambda_X^p(r) = \sup\{|\Gamma|\, h^p(\Gamma) : \Gamma\leq X, |\Gamma|\leq r\},
	\]
	where $|\Gamma|:=|V(\Gamma)|$. 
\end{definition}

We consider functions on $\N$ up to the equivalence relation $\simeq$, where $f \lesssim g$ if there exists $C>0$ so that $f(r) \leq Cg(Cr+C)+C$ for all $r$, and $f \simeq g$ if $f \lesssim g$ and $g \lesssim f$.

A function $f:VX \to VY$ is a \emph{regular map} if there exists $C>0$ so that $f$ is $C$-Lipschitz and for all $y \in VY$, $|f^{-1}(y)| \leq C$.  We say $X$ \emph{regularly embeds} in $Y$ if there exists a regular map from $X$ to $Y$.
In particular, coarse embeddings between bounded degree graphs are regular maps, as is the inclusion map for a finitely generated subgroup of a finitely generated group.

If there exists a regular map $f: VX \to VY$ then $\Lambda_X^p \lesssim \Lambda_Y^p$ for all $p \in [1,\infty]$ \cite{HumeMackTess-Pprof}.
Thus, $\Lambda_X^p$ is a quasi-isometry invariant of $X$ for each $p \in [1,\infty]$.  In particular, if $X$ is any metric space quasi-isometric to a bounded degree graph $Y$, we can set $\Lambda_X^p := \Lambda_Y^p$ and then $\Lambda_X^p$ is well-defined up to $\simeq$, independently of the choice of $Y$.

For $p=1$, $\Lambda_X^1 \simeq \sep_X$, where $\sep_X$ is the separation profile of Benjamini--Schramm--Tim\'ar~\cite{BenSchTim-12-separation-graphs}.

Let us now turn to conformal dimension.
A metric space $Z$ is Ahlfors $Q$-regular if there exists a Borel regular measure $\mu$ on $Z$ and a constant $C>0$ so that for any $z \in Z$, $0<r\leq \diam(Z)$, $\frac{1}{C} r^Q \leq \mu(B(z,r)) \leq Cr^Q$.
The \emph{(Ahlfors regular) conformal dimension} of $Z$, denoted $\Confdim(Z)$, is the infimum of all $Q$ such that $Z$ is quasisymmetric to an Ahlfors $Q$-regular space.
As the conformal dimension is a quasisymmetry invariant, for a Gromov hyperbolic space $X$, $\Confdim(\bdry X)$ is well-defined and depends only on the quasi-isometry class of $X$.
It can be more sensitive than the topology of the boundary. For example, $\HH_\R^4$ and $\HH_\C^2$ are not quasi-isometric as $\Confdim(\bdry \HH_\R^4)=3$ and $\Confdim(\bdry \HH_\C^2)=4$, even though $\bdry \HH_\R^4$ and $\bdry \HH_\C^2$ are homeomorphic.
An important example for us is that when $Z$ is a compact Ahlfors $Q$-regular space, $\Confdim(Z \times [0,1])=Q+1$, see~\cite[Example 4.1.9]{Mac-Tys-cdimexpo} and references therein.

In earlier work of the authors and Romain Tessera, the Poincar\'e profiles of each rank-one symmetric space $X$ were computed, in particular, for $p<Q:=\Confdim(\bdry X)$ we have $\Lambda_X^p(r) \simeq r^{1-\frac{1}{Q}}$, $\Lambda_X^Q(r) \simeq r^{1-\frac{1}{Q}}\log^{\frac{1}{Q}}(r)$, while for $p>Q$ we have $\Lambda_X^p(r) \simeq r^{1-\frac{1}{p}}$ \cite[Theorem 12]{HumeMackTess-Pprof}.
The lower bound in the case $p>Q$ comes from an embedded $3$-regular tree $T_3$ which has $\Lambda_{T_3}^p(r) \simeq r^{1-\frac{1}{p}}$ for all $p \in [1,\infty)$. This threshold, beyond which trees are the critical obstruction to Poincar\'e inequalities, motivates the following definition.

\begin{definition}\label{def:profile-crit}
	Given a graph $X$, we define its \emph{critical exponent for Poincar\'e profiles} $\pcrit(X)$ as  
\begin{equation}\label{eq:pcrit}
    \pcrit(X) := \inf \{ p \geq 1 : \Lambda^p_X(r) \lesssim r^{1-\frac{1}{p}} \}.
\end{equation}
\end{definition}
This critical exponent satisfies the following monotonicity property amongst bounded degree graphs: $\pcrit(X) \leq \pcrit(Y)$ whenever $X$ regularly embeds into $Y$.  Moreover, the set in equation \eqref{eq:pcrit} is always connected, see Corollary~\ref{cor:crit-exponent-interval}. We note that these results do not assume hyperbolicity, and indeed, the critical exponent can be useful in other scenarios; for instance, the critical exponent of any finitely generated nilpotent group with polynomial growth of degree $d$ is exactly $1-\frac1d$ \cite{HumeMackTess-Pprof}.

In the definition here, for graphs containing a regularly embedded copy of $T_3$ (such as Cayley graphs of non-elementary hyperbolic groups) we could equivalently have taken ``$\simeq r^{1-\frac{1}{p}}$'', see Corollary~\ref{cor:crit-exponent-interval}.

\subsection{Results on critical exponents}
A goal of this paper is to progress our understanding of the following question.  Recall that for any compact Ahlfors regular space $Z$ there is a geodesic Gromov hyperbolic graph $\Cone(Z)$ so that $Z$ is a boundary of $\Cone(Z)$~\cite{BonkSchramm}.  If $G$ is a Gromov hyperbolic group, $\Cone(\bdry G)$ is quasi-isometric to $G$.

\begin{question}[{\cite[Question 8.6]{HumeMackTess-PprofLie}}]\label{quequality}
    For which compact Ahlfors regular metric spaces $Z$ does the equality $\pcrit(\Cone(Z)) = \Confdim(Z)$ hold?  Is there equality for every boundary of a hyperbolic group?
\end{question}
A positive answer to this question for hyperbolic groups would imply a positive answer to the following question.
\begin{question}\label{qu:regmap-confdim-monotone}
If a hyperbolic group $G$ regularly embeds in a hyperbolic group $H$, then $\Confdim(\bdry G) \leq \Confdim(\bdry H)$.	
\end{question}

We always have one inequality in Question~\ref{quequality}.
\begin{theorem}[{\cite[Theorem 5.16]{HumeMackTess-PprofLie}}]\label{thm:crit-exponent-previous-upper-bound}
    Let $Z$ be a compact Ahlfors regular metric space and let $\Cone(Z)$ be the hyperbolic cone of $Z$. Then
    \[
        \pcrit(\Cone(Z)) \leq \Confdim(Z).
    \]
\end{theorem}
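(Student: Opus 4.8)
The plan is to reduce to a concrete combinatorial model of the cone and then, for every finite subgraph, to pull back a test function from the boundary $Z$: the resulting Rayleigh quotient will be governed by a geometric series over the levels of the cone whose convergence is exactly what the strict inequality $p > \Confdim(Z)$ provides (and which accounts for the disappearance of the logarithmic factor that appears at $p = \Confdim(Z)$).

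First I would reduce. Fix $p > \Confdim(Z)$ and choose $Q$ with $\Confdim(Z) < Q < p$; by definition there is a space $Z'$, quasisymmetric to $Z$, that is Ahlfors $Q$-regular. Since a quasisymmetry between Ahlfors regular (hence doubling) spaces induces a quasi-isometry of their hyperbolic cones (Bonk--Schramm), $\Cone(Z)$ and $\Cone(Z')$ are quasi-isometric, so by the quasi-isometry invariance of $\Lambda^p$ it suffices to show $\Lambda^p_{\Cone(Z')}(r) \lesssim r^{1 - \frac1p}$; letting $p \downarrow \Confdim(Z)$ then yields $\pcrit(\Cone(Z)) \leq \Confdim(Z)$. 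So assume from now on that $Z$ is compact Ahlfors $Q$-regular with $Q < p$, and work with the standard ``hyperbolic approximation'' model of $\Cone(Z)$: vertices are pairs (point of a maximal $2^{-n}$-separated net $V_n \subseteq Z$, level $n$), with horizontal edges joining $C2^{-n}$-close points of $V_n$ and vertical edges joining $C2^{-n}$-close points of $V_n$ and $V_{n+1}$. Ahlfors regularity makes this graph bounded-degree and gives the counting estimate $|V_n \cap B(z,R)| \simeq (2^n R)^Q$ for $2^{-n} \leq R \leq \diam Z$; in particular $|V_n| \simeq 2^{nQ}$, and each level contributes $\simeq 2^{nQ}$ horizontal and vertical edges.

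The key step is the test function. Given a finite subgraph $\Gamma \leq \Cone(Z)$ with $N = |\Gamma|$ vertices, and a Lipschitz function $\varphi\colon Z \to \R$ of mean zero, put $f(v) = \varphi(\pi(v))$, where $\pi(v) \in Z$ is the underlying net point, and restrict to $\Gamma$. On any edge $e$ at level $n$ one has $|\nabla f|(e) \leq C\,\mathrm{Lip}(\varphi)\,2^{-n}$, and there are $\lesssim 2^{nQ}$ edges at level $n$, so
\[
\|\nabla f\|_p^p \ \lesssim\ \mathrm{Lip}(\varphi)^p \sum_{n \geq 0} 2^{nQ}\bigl(C 2^{-n}\bigr)^p \ =\ C^p \mathrm{Lip}(\varphi)^p \sum_{n \geq 0} 2^{n(Q-p)} \ \lesssim\ 1,
\]
the series converging precisely because $p > Q$. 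On the other hand, Ahlfors regularity turns $\sum_{v \in V_n} |\varphi(\pi v)|^p$ into a Riemann sum $\simeq 2^{nQ}\|\varphi\|_{L^p(Z)}^p$ once $n$ is large relative to $\varphi$, while the signed sums $\sum_{v \in V_n}\varphi(\pi v)$ are $o(2^{nQ})$, so if $\Gamma$ contains a metric ball of $\Cone(Z)$ of radius $\simeq m$ --- so that $N \simeq 2^{mQ}$ --- then $\|f\|_p^p \gtrsim N\,\|\varphi\|_{L^p(Z)}^p$ after subtracting the (lower-order) mean. Choosing $\varphi$ fixed gives $h^p(\Gamma) \lesssim N^{-1/p}$, hence $N\, h^p(\Gamma) \lesssim N^{1 - \frac1p} \leq r^{1-\frac1p}$.

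The main obstacle is that an arbitrary $\Gamma$ need not be (close to) a metric ball, so $\|f\|_p^p \gtrsim N$ can fail and one must exploit the hyperbolic structure of $\Cone(Z)$ to reduce to the situation above. I would decompose $\Cone(Z)$ into downward cones and run a descent along the one carrying the most mass of $\Gamma$: as soon as a single subcone captures between $\varepsilon N$ and $(1-\varepsilon)N$ vertices of $\Gamma$, cutting the boundedly many edges at that branching splits $\Gamma$ into two pieces of size $\gtrsim N$ and already gives $h^p(\Gamma) \lesssim N^{-1/p}$; otherwise $\Gamma$ stays $(1-\varepsilon)$-concentrated down a geodesic ray, hence lies in a bounded neighbourhood of a geodesic segment, where either it is essentially tree-like --- handled by $\Lambda^p_{T_3}(r) \simeq r^{1 - \frac1p}$ --- or the part of $Z$ it sees at the relevant scales is covered by boundedly many balls to which rescaled copies of the boundary test function apply. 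Organising these cases (or, more structurally, reducing $\Lambda^p_{\Cone(Z)}$ to the geometric decay, valid since $p > Q$, of the combinatorial $p$-modulus of $Z$), together with subadditivity of Poincar\'e constants under such decompositions, gives $N\,h^p(\Gamma) \lesssim r^{1-\frac1p}$ whenever $|\Gamma| \leq r$, i.e.\ $\Lambda^p_{\Cone(Z)}(r) \lesssim r^{1-\frac1p}$, completing the reduction and the proof.
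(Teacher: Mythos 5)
This statement is not proved in the present paper at all: it is quoted verbatim from \cite[Theorem 5.16]{HumeMackTess-PprofLie}, so the only question is whether your blind argument actually establishes it. Your first two steps are sound and capture why the result should be true: replacing $Z$ by an Ahlfors $Q$-regular quasisymmetric model with $Q<p$, passing to a hyperbolic approximation, and pulling back a fixed mean-zero Lipschitz function $\varphi$ from $Z$ does give $\|\nabla f\|_p^p\lesssim\sum_n 2^{n(Q-p)}\lesssim 1$ together with $\|f-f_\Gamma\|_p^p\gtrsim N$ \emph{when} $\Gamma$ is (comparable to) a ball centred at the apex, and hence $h^p(\Gamma)\lesssim N^{-1/p}$ for such $\Gamma$.

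The gap is in the last paragraph, which is exactly where the content of the cited theorem lies: the supremum in $\Lambda^p$ runs over \emph{all} subgraphs, and your reduction of an arbitrary $\Gamma$ to the ball case does not work as stated. The claim that a subcone capturing a definite proportion of $\Gamma$ can be split off by ``cutting the boundedly many edges at that branching'' is false whenever $\Confdim(Z)>1$: the lateral edge-boundary of a subcone over a ball $B(z,2^{-n})$ at level $m$ has $\asymp 2^{(m-n)(Q-1)}$ edges, so inside $\Gamma$ the cut can have size a positive power of $N$ (for example, balls in $\HH^3$ have no proportional cuts of size $o(\sqrt N)$, since $\Lambda^1_{\HH^3}(r)\simeq r^{1/2}$). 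With a cut of size $K$ the two-valued test function only yields $h^p(\Gamma)\lesssim (K/N)^{1/p}$, which is insufficient unless $K=O(1)$. The complementary alternative is also not exhaustive: failure to find a proportional subcone does not force $\Gamma$ into a bounded neighbourhood of a geodesic (the mass can scatter among children, or sit in a ball around a deep vertex), and the fallback phrases ``essentially tree-like'', ``rescaled copies of the boundary test function'' and ``geometric decay of the combinatorial $p$-modulus'' are not arguments. So the proposal proves the bound only for apex-centred ball-like subgraphs; handling arbitrary finite subgraphs is the real difficulty, and for that you would need the substantially more involved machinery of \cite[Theorem 5.16]{HumeMackTess-PprofLie} (or a genuinely new argument replacing it).
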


As already mentioned, the first class of examples with a positive answer to Question~\ref{quequality} are provided by previous work of the authors with Tessera, combined with a result of Bonk and Kleiner. 

\begin{theorem}\label{thm:hyp-attain-confdim-critexp}
    If $X$ is quasi-isometric to a non-elementary Gromov hyperbolic group whose boundary attains its Ahlfors regular conformal dimension, e.g., a rank-one symmetric space or a Bourdon Fuchsian building, then $\pcrit(X) = \Confdim(\bdry X)$.  
\end{theorem}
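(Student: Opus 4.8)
The plan is to complement the general upper bound of Theorem~\ref{thm:crit-exponent-previous-upper-bound} with a matching lower bound that uses the attainment hypothesis. Let $G$ be the hyperbolic group, $Z=\bdry G$ and $Q=\Confdim(Z)$. Since $X$ is quasi-isometric to $G$ and $G$ is quasi-isometric to $\Cone(Z)$, and $\pcrit$ is a quasi-isometry invariant of bounded degree graphs, $\pcrit(X)=\pcrit(\Cone(Z))\le Q$ by Theorem~\ref{thm:crit-exponent-previous-upper-bound}. As $G$ is non-elementary, $Z$ is perfect; if $Z$ is disconnected it is a Cantor set, for which $\Confdim(Z)=0$ is not attained, so $Z$ is connected and $Q\ge\dimT(Z)\ge 1$. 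If $Q=1$ then $1\le\pcrit(X)\le 1$ and we are done, so we may assume $Q>1$.

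By hypothesis there is a metric $\rho$ in the conformal gauge of $Z$ making $(Z,\rho)$ Ahlfors $Q$-regular, and since $\Cone(\cdot)$ is, up to quasi-isometry, unchanged by a quasisymmetric change of boundary metric, $\Cone(Z,\rho)\simeq\Cone(Z)\simeq G\simeq X$; so it suffices to bound $\pcrit(\Cone(Z,\rho))$ from below by $Q$. The key external input, due to Bonk and Kleiner, is that an Ahlfors $Q$-regular metric realizing the conformal dimension $Q>1$ of the boundary of a hyperbolic group satisfies a combinatorial Loewner-type property: the combinatorial $Q$-modulus of the family of curves joining any two disjoint non-degenerate continua in $(Z,\rho)$ is bounded below, uniformly over the relevant scales.

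Feeding this into the Poincar\'e-profile lower bound from our earlier work with Tessera --- the same mechanism that produced the lower bounds $\Lambda^p_X(r)\gtrsim r^{1-\frac1Q}$ for rank-one symmetric spaces in \cite{HumeMackTess-Pprof} --- yields $\Lambda^p_{\Cone(Z,\rho)}(r)\gtrsim r^{1-\frac1Q}$ for all $p\in[1,Q]$. Concretely, one tests the $L^p$-Poincar\'e constant of the depth-$n$ truncation $\Gamma_n$ of $\Cone(Z,\rho)$, which by Ahlfors regularity has of order $2^{nQ}$ vertices, against functions assembled from the combinatorial modulus data at scale $2^{-n}$; the modulus lower bound forces $h^p(\Gamma_n)\gtrsim 2^{-n}$, hence $|\Gamma_n|\,h^p(\Gamma_n)\gtrsim 2^{n(Q-1)}\simeq|\Gamma_n|^{1-\frac1Q}$. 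For $p<Q$ the right-hand side grows strictly faster than $r^{1-\frac1p}$, so $p\notin\{q\ge 1:\Lambda^q_X(r)\lesssim r^{1-\frac1q}\}$; therefore $\pcrit(X)\ge Q$, and combined with the upper bound $\pcrit(X)=Q=\Confdim(\bdry X)$. The named examples satisfy the hypothesis: boundaries of rank-one symmetric spaces are bi-Lipschitz to Carnot groups, which are Ahlfors regular, Loewner, and attain their conformal dimension; and for Bourdon Fuchsian buildings, Bourdon computed the conformal dimension of the boundary and showed it is attained, with Bourdon--Pajot showing the extremal metric satisfies the required Loewner/modulus property.

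The main obstacle is pinning down the precise form of the Bonk--Kleiner input (attainment of conformal dimension $\Rightarrow$ a uniform lower bound on combinatorial $Q$-modulus of the extremal metric) and verifying that the hypotheses of the earlier Poincar\'e-profile lower bound, originally stated for specific Loewner boundaries such as Carnot groups, are genuinely met under this more general modulus hypothesis; the reduction to connected boundary and the case $Q=1$ are routine.
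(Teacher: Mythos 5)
Your proposal is correct and follows essentially the same route as the paper: the upper bound $\pcrit(X)\le\Confdim(\bdry X)$ from Theorem~\ref{thm:crit-exponent-previous-upper-bound} (via \cite[Theorem 6.1]{HumeMackTess-PprofLie}), and, for $Q>1$, the Bonk--Kleiner theorem that attainment of the conformal dimension yields a boundary satisfying a $(1,Q)$-Poincar\'e inequality (Loewner property), which is fed into the Poincar\'e-profile lower bound machinery of \cite[Theorem 11.3]{HumeMackTess-Pprof} to force $\pcrit(X)\ge Q$. The only differences are cosmetic: the paper quotes the Bonk--Kleiner input as an honest $(1,Q)$-Poincar\'e inequality rather than a combinatorial modulus bound and cites the profile lower bound instead of re-sketching it, and your connectivity digression is unnecessary (and slightly off, since a disconnected boundary of a non-elementary hyperbolic group need not be a Cantor set), though this does not affect the argument.
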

    
\begin{proof}
	For $p > \Confdim(\bdry X)$, $\Lambda^p_X(r) \lesssim r^{1-\frac{1}{p}}$ by \cite[Theorem 6.1]{HumeMackTess-PprofLie}.
    If $\Confdim(\bdry X)=1$ then we are done.
	Otherwise, by \cite[Theorem 1.3]{BK-05-cdim-cannon} if $X$ attains its Ahlfors regular conformal dimension $Q>1$ then $\bdry X$ admits a $(1,Q)$-Poincar\'e inequality in the sense of Heinonen--Koskela, so 
	$\Lambda^Q_X(r) \gtrsim r^{1-1/Q}\log(r)^{1/Q}$ by \cite[Theorem 11.3]{HumeMackTess-Pprof}.
\end{proof}

Spaces of the form $Z \times [0,1]$ are a classic family of metric spaces which attain their conformal dimension but are quite different to the boundaries of Theorem~\ref{thm:hyp-attain-confdim-critexp}, for example they rarely admit Poincar\'e inequalities.

We also find a positive answer to Question~\ref{quequality} for these spaces.
(We slightly simplify statements of results in this introduction.)
\begin{theorem}[Theorem \ref{thm:product-crit-exponent}]
	\label{thm:product-crit-exponent-intro}
    Let $Z$ be a compact Ahlfors regular space.
	Then
    \[
    \pcrit(\Cone(Z\times [0,1])) = \Confdim(Z\times[0,1]).
    \]
\end{theorem}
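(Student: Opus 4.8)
The plan is to establish the two inequalities separately. The upper bound $\pcrit(\Cone(Z\times[0,1])) \le \Confdim(Z\times[0,1])$ is exactly Theorem~\ref{thm:crit-exponent-previous-upper-bound} applied to the compact Ahlfors regular space $Z\times[0,1]$, so the content is the reverse inequality. Writing $Q$ for the Ahlfors regularity exponent of $Z$, we have $\Confdim(Z\times[0,1]) = Q+1$, so it suffices to prove $\pcrit(\Cone(Z\times[0,1])) \ge Q+1$. When $Q=0$ this is trivial since $\pcrit \ge 1$ always, so assume $Q>0$. The strategy is then to exhibit, inside $\Cone(Z\times[0,1])$, combinatorial round trees whose critical exponents approach $Q+1$, and to conclude using monotonicity of $\pcrit$ under regular maps together with our lower bound for $\pcrit$ of combinatorial round trees.

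Fix $\epsilon\in(0,Q)$. Starting from a Christ-type dyadic decomposition of $Z$ (cubes of generation $k$ having diameter $\asymp 2^{-k}$, measure $\asymp 2^{-Qk}$, and $\asymp 2^{Q}$ children), I would pass to generations $0,m,2m,\dots$ for a large $m=m(\epsilon)$ and build a compact subset $C_\epsilon\subseteq Z$ as follows: inside each retained super-cube of diameter $D$, keep a maximal $cD$-separated family of its sub-super-cubes, where the constant $c=c(\epsilon,m)<1$ is chosen so that such a family has $\asymp 2^{m(Q-\epsilon)}$ members. This is a routine construction, and $C_\epsilon$ is then a uniformly disconnected Cantor set which is Ahlfors regular of dimension $Q-\epsilon$ (branching $\asymp 2^{m(Q-\epsilon)}$ while scales contract by $2^{-m}$ per super-generation).

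Now $C_\epsilon\times[0,1]$ is a closed subset of $Z\times[0,1]$ with the restricted metric, so the Bonk--Schramm cone $\Cone(C_\epsilon\times[0,1])$ sits inside $\Cone(Z\times[0,1])$ as a quasi-isometrically embedded, quasiconvex subset (up to the harmless truncation of the height coordinate); passing to bounded-degree graph models of the two cones this gives a regular map, hence $\pcrit(\Cone(C_\epsilon\times[0,1])) \le \pcrit(\Cone(Z\times[0,1]))$. On the other hand, because $C_\epsilon$ is a uniformly disconnected Ahlfors regular Cantor set, $\Cone(C_\epsilon)$ is quasi-isometric to a tree which branches $\asymp 2^{m(Q-\epsilon)}$-fold between consecutive super-generations, and thickening by the $[0,1]$-factor attaches hyperbolic-plane-like fibres whose ``horocyclic'' combinatorial length grows by a factor $\asymp 2^{m}$ per super-generation. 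Thus $\Cone(C_\epsilon\times[0,1])$ is quasi-isometric to a combinatorial round tree whose ratio of log-branching to log-fibre-growth is $\tfrac{m(Q-\epsilon)\log 2}{m\log 2}=Q-\epsilon$, and our lower bound for $\pcrit$ of combinatorial round trees yields $\pcrit(\Cone(C_\epsilon\times[0,1])) \ge 1+(Q-\epsilon)$ (which, in view of Theorem~\ref{thm:crit-exponent-previous-upper-bound} and $\Confdim(C_\epsilon\times[0,1])=(Q-\epsilon)+1$, is in fact an equality). Combining, $\pcrit(\Cone(Z\times[0,1])) \ge 1+Q-\epsilon$ for every $\epsilon\in(0,Q)$; letting $\epsilon\to0$ and using the upper bound completes the proof.

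I expect the main obstacle to be the identification of $\Cone(C_\epsilon\times[0,1])$, up to quasi-isometry, with a combinatorial round tree having the claimed branching and fibre-growth parameters: one must translate between the Bonk--Schramm cone over a metric product and the abstract definition of a combinatorial round tree carefully enough to verify the hypotheses of the round-tree lower bound, tracking the branching coming from $C_\epsilon$ against the exponential growth of the interval fibres coming from $[0,1]$, and it is precisely the uniform disconnectedness of $C_\epsilon$ that makes the tree direction genuinely tree-like. By comparison, the construction of $C_\epsilon$ and the quasiconvexity of the subcone are standard, as is the fact that $\Cone(W)$ is quasi-isometric to a bounded-degree graph for $W$ Ahlfors regular (needed so that the monotonicity of $\pcrit$ applies).
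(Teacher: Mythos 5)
Your strategy is essentially the paper's: the upper bound is Theorem~\ref{thm:crit-exponent-previous-upper-bound}, and the lower bound comes from locating Cantor sets of dimension close to $Q$ inside $Z$, crossing with $[0,1]$, recognising the cone over the product as a round tree, and combining the round-tree bound (Theorem~\ref{thm:pcritRT}/Proposition~\ref{prop:roundtree-lower}) with monotonicity of $\pcrit$ under regular maps. The one real difference is where the Cantor set comes from, and it matters for the step you flag as your ``main obstacle''. The paper invokes Mattila--Saaranen to get a bi-Lipschitz embedding of the exact self-similar set $Z^{H,V}$ into $Z$; then the identification you are worried about is already available, since Theorem~\ref{thm:round-tree-confdim} shows a visual boundary of $RT^{H,V}$ is bi-Lipschitz to $Z^{H,V}\times[0,1]$, so $\Cone(Z^{H,V}\times[0,1])$ is quasi-isometric to $RT^{H,V}$ by Bonk--Schramm invariance. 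Your hand-built $C_\epsilon$ is only coarsely self-similar: its branching is $\asymp 2^{m(Q-\epsilon)}$ rather than exactly constant, whereas Definition~\ref{def:round-tree-graph} demands vertical branching exactly $V$ at every vertex (only the horizontal branching may vary). So you would have to prune to constant branching and then redo the boundary identification of Theorem~\ref{thm:round-tree-confdim} for an inhomogeneous set, or establish a genuinely bi-Lipschitz (not just quasisymmetric) equivalence of $C_\epsilon$ with some $Z^{H,V}$; neither is automatic, and that unproven identification is exactly the work the paper's citation short-circuits. Everything else in your plan (quasiconvexity of the sub-cone, the regular map between graph models, letting $\epsilon\to 0$) matches the paper, which additionally records explicit profile lower bounds for every $p<1+Q$, using Corollary~\ref{cor:lqlpweakbound} for small $p$ --- a refinement you do not need for the $\pcrit$ statement alone.
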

We deduce this from the special case (Theorem~\ref{thm:pcritRT}) when $Z \times [0,1]$ is the boundary of a ``combinatorial round tree'', a variation of a construction of Gromov introduced in \cite{Mac-12-random-cdim}.  
(See Stark~\cite{Stark-25-round-tree-survey} for a recent survey.)

We apply this to give a positive answer to Question \ref{quequality} for a certain class of Heintze groups, which also partially answers \cite[Question 8.7]{HumeMackTess-PprofLie}.
\begin{corollary}[Corollary~\ref{cor:crit-exponent-heintze}]
	\label{cor:crit-exponent-heintze-intro}
    Let $G=\R^n\rtimes_A\R$ where $A\in\GL(n,\R)$ is diagonalisable with positive eigenvalues. Then
    \[
    \pcrit(G) = \Confdim(\bdry G).
    \]
\end{corollary}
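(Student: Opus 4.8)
The plan is to prove the two inequalities separately. For $\pcrit(G)\le\Confdim(\bdry G)$ there is nothing to add: Heintze groups are Gromov hyperbolic and quasi-isometric to the Bonk--Schramm cone of their boundary (see \cite{BonkSchramm}), and $\bdry G$ is compact and Ahlfors regular, so Theorem~\ref{thm:crit-exponent-previous-upper-bound} applies verbatim. All the work goes into the reverse inequality, which I would obtain by locating inside $G$ a quasi-isometrically embedded copy of $\Cone(C\times[0,1])$ for a suitable compact Ahlfors regular space $C$, and then quoting Theorem~\ref{thm:product-crit-exponent-intro}.

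First I would normalise the datum. Since $A$ is diagonalisable over $\R$ with positive eigenvalues, and since conjugating $A$ or rescaling the $\R$-factor only replaces $G$ by an isomorphic group, I may assume $A=\mathrm{diag}(\alpha_1,\dots,\alpha_n)$ with $1=\alpha_1\le\alpha_2\le\dots\le\alpha_n$. A standard computation then identifies $\bdry G$, up to bi-Lipschitz equivalence, with the one-point compactification of $\R^n$ equipped with the translation-invariant metric $d(x,y)=\max_i|x_i-y_i|^{1/\alpha_i}$; in particular $\bdry G$ is Ahlfors regular of dimension $\sum_i\alpha_i$, and because $\alpha_1=1$ the first coordinate contributes an undistorted interval factor.

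Next I would take the closed subset $W=[0,1]^n\subseteq\R^n\subseteq\bdry G$. By translation-invariance of $d$, the space $(W,d|_W)$ is bi-Lipschitz equivalent to the product $[0,1]\times C$, where $C:=\prod_{i=2}^n\bigl([0,1],\,|s-t|^{1/\alpha_i}\bigr)$ is a compact Ahlfors regular space with regularity exponent $\sum_{i\ge2}\alpha_i$. I would then invoke the standard geometric fact that the quasiconvex hull $\mathrm{hull}(W)\subseteq G$ is quasi-isometrically embedded and is itself a visual hyperbolic space with Gromov boundary $W$ (for an induced visual metric), hence quasi-isometric to $\Cone(W)$, and so to $\Cone([0,1]\times C)$, by \cite{BonkSchramm}. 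Monotonicity of $\pcrit$ under regular maps together with Theorem~\ref{thm:product-crit-exponent-intro} then gives
\[
\pcrit(G)\ \ge\ \pcrit\bigl(\Cone([0,1]\times C)\bigr)\ =\ \Confdim\bigl([0,1]\times C\bigr).
\]
It remains to identify $\Confdim([0,1]\times C)$ with $\Confdim(\bdry G)$. The inequality ``$\le$'' is immediate from the bi-Lipschitz embedding $[0,1]\times C\hookrightarrow\bdry G$; for ``$\ge$'' I would write the one-point compactification of $(\R^n,d)$ as a countable union of isometric translates of $W$ together with a single point and use stability of conformal dimension under countable closed covers. (Alternatively, one may simply quote Pansu's formula $\Confdim(\bdry G)=\sum_i\alpha_i$ and the identity $\Confdim(C\times[0,1])=\bigl(\sum_{i\ge2}\alpha_i\bigr)+1$ from \cite{Mac-Tys-cdimexpo}.) Together with the upper bound, this yields $\pcrit(G)=\Confdim(\bdry G)$.

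I expect the main obstacle to be the geometric input in the third step: showing that the hyperbolic cone of the box $[0,1]\times C$, sitting inside the product-of-snowflakes boundary of $G$, is realised up to quasi-isometry as a quasiconvex subset of $G$. This needs an accurate description of the visual boundary of a diagonalisable Heintze group and the fact --- standard but requiring care --- that quasiconvex hulls of closed boundary subsets of a hyperbolic space are quasi-isometric to the Bonk--Schramm cones of those subsets. Everything else is bookkeeping with the invariance of $\pcrit$ under regular maps, Theorem~\ref{thm:product-crit-exponent-intro}, and standard stability properties of conformal dimension.
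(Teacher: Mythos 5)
Your proposal is correct and follows essentially the same route as the paper: the paper's proof of Corollary~\ref{cor:crit-exponent-heintze} is exactly the observation that $\bdry G$ contains a bi-Lipschitz copy of $[0,1]\times\bigl(([0,1],d^{a_2/a_1})\times\cdots\times([0,1],d^{a_n/a_1})\bigr)$, which (as in the proof of Theorem~\ref{thm:product-crit-exponent}) extends to a quasi-isometric embedding of hyperbolic cones, combined with Theorem~\ref{thm:crit-exponent-previous-upper-bound} for the upper bound. The only caveat is that the final identification $\Confdim([0,1]\times C)=\Confdim(\bdry G)$ should be done via your parenthetical alternative (Pansu's formula $\sum_i a_i/a_1$ together with the product formula), which is what the paper does, rather than via the claimed ``stability of conformal dimension under countable closed covers,'' which is not a standard property of Ahlfors regular conformal dimension.
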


Combinatorial round trees also feature in the following contexts, where they are used to give lower bounds on conformal dimension.
\begin{itemize}
	\item boundaries of random groups in the few relator, polynomially many relator, and Gromov density models~\cite{Mac-12-random-cdim,Mac-16-confdim-subcplxs,Frost-22-confdim-random}, see Section~\ref{ssec:random-groups};
	\item boundaries of hyperbolic Coxeter groups with underlying graph a complete graph and edge labels uniformly bounded, getting infinitely many quasi-isometry classes of hyperbolic groups with Pontryagin sphere boundary~\cite{Field-Gupta-Lyman-Stark-25-coxeter-confdim}, see Section~\ref{ssec:coxeter-application};
	\item boundaries of hyperbolic right-angled Coxeter groups (RACG), getting infinitely many quasi-isometry classes of RACG with Pontryagin sphere boundary, and infinitely many quasi-isometry classes of RACG that virtually algebraically fibre for each prescribed virtual cohomological dimension $n \geq 2$~\cite{Cashen-Dani-Schreve-Stark-25-RACG-confdim}, see Section~\ref{ssec:coxeter-application}
\end{itemize}

Thanks to Theorem~\ref{thm:pcritRT}, in each such situation we immediately get the same lower bounds for $p_\Lambda$. 
All these bounds on critical exponents have consequences for non-em\-bed\-ding results.
For example, a Heintze group $G=\R^n \rtimes_A \R$ with diagonal $A$ does not regularly/coarsely embed into a hyperbolic group of conformal dimension $< \Confdim(\bdry G)$.  And, in Gromov's density model for any $d \in (0,\frac{1}{8})$, random groups at large $\ell$ do not regularly embed into random groups with small $\ell$.

There are also non-hyperbolic groups with finite $\pcrit$, for example when $X$ is a finitely generated nilpotent group, $\pcrit(X)$ is the polynomial growth degree of $X$ \cite{HumeMackTess-Pprof}. We raise the following:

\begin{question} Find a finitely generated group $G$ with $\pcrit(G)<+\infty$ which 
\begin{enumerate}
    \item[(i)] is not a subgroup of a relatively hyperbolic group with virtually nilpotent peripheral subgroups; or more generally
    \item[(ii)] does not regularly embed into some real hyperbolic space.
\end{enumerate}
\end{question}

\subsection{Results at conformal dimension $1$}
We are able to use techniques from \cite{Mac-10-confdim} to build more irregular round trees to show that hyperbolic groups which do not split over finite or virtually cyclic subgroups have separation profiles larger than some positive power.  Note that an infinite hyperbolic group $G$ does not split over a finite or virtually cyclic subgroup if and only if $\bdry G$ is connected, locally connected and has no local cut points. Recall that Benjamini-Schramm-Tim\'ar's separation profile $\sep_X$ satisfies $\sep_X \simeq \Lambda_X^1$ by \cite{HumeMackTess-Pprof}; we stick to the latter notation in this introduction.

\begin{theorem}\label{thm:power-sep-hyp-no-local-cut-points}
	If $G$ is an infinite hyperbolic group that doesn't split over a finite or virtually cyclic subgroup, either $G$ is virtually co-compact Fuchsian (and so $\Lambda^1_G(r) \simeq \log(r)$) or $\Lambda^1_G(r) \gtrsim r^\alpha$ for some $\alpha>0$.
\end{theorem}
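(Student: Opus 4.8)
The plan is to argue along the stated dichotomy. Suppose first that $G$ is virtually cocompact Fuchsian; then $G$ is quasi-isometric to $\HH_\R^2$, so $\Lambda^1_G\simeq\Lambda^1_{\HH_\R^2}$. Since $\Confdim(\bdry\HH_\R^2)=\Confdim(\Sph^1)=1$ we are at the critical exponent, so the rank-one computation of \cite[Theorem 12]{HumeMackTess-Pprof} (which gives $\Lambda^p_X(r)\simeq r^{1-1/Q}(\log r)^{1/Q}$ at $p=Q=\Confdim(\bdry X)$) yields $\Lambda^1_{\HH_\R^2}(r)\simeq\log r$, recovering the separation profile of the hyperbolic plane from \cite{BenSchTim-12-separation-graphs}. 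Hence I may assume $G$ is not virtually Fuchsian. As $G$ is infinite hyperbolic and does not split over a finite subgroup it is one-ended, and it is non-elementary since it does not split over a virtually cyclic subgroup; by Bowditch's description of virtually cyclic splittings of one-ended hyperbolic groups, together with the absence of global cut points in $\bdry G$ (Bestvina--Mess, Swarup) and the convergence-group characterisation of virtually Fuchsian groups, the hypothesis forces $\bdry G$ to be connected, locally connected, to have no local cut points, and to not be homeomorphic to $\Sph^1$.

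The next step is to produce a round tree inside $G$. Under precisely these hypotheses on $\bdry G$, the construction in \cite{Mac-10-confdim} yields an (irregular) round tree $R$ together with a quasi-isometric, hence regular, embedding $R\hookrightarrow G$. The geometric input is that such a boundary is linearly connected and, because it has no local cut points and is not a circle, carries at every point and every scale ``two-sided'' branchings of uniformly thick arcs (one can always route around a short sub-arc, since deleting it does not locally separate); iterating these builds the round tree. The irregularity is unavoidable: the locations of the branchings and the widths of the arcs must be chosen to match the local structure of $\bdry G$, not self-similarly. From the construction I retain only the features needed below: over each bounded block of levels the number of sheets of $R$ multiplies by a definite factor $L\ge 2$ while the sheet widths grow by a definite geometric factor $\lambda>1$; in particular $\Confdim(\bdry R)>1$.

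The technical heart is to show that any such round tree satisfies $\Lambda^1_R(r)\gtrsim r^\alpha$ for some $\alpha=\alpha(R)>0$. I would prove this by the argument behind Theorem~\ref{thm:pcritRT}, specialised to $p=1$, which lies in the subcritical range since $1<\Confdim(\bdry R)$: for the finite subgraphs $R_k\le R$ spanned by the first $k$ levels, with $n_k:=|R_k|\to\infty$, one bounds $n_k\,h^1(R_k)$ from below, a quantity comparable (by a co-area argument) to the least number of edges whose removal leaves every component of $R_k$ of size at most $n_k/2$. Because each block of levels splits every sheet into at least $L$ sheets of $\lambda$-times-greater width, joined across consecutive levels in a grid-like pattern, every such cut must meet $\gtrsim n_k^{\alpha}$ edges: the growing widths force every separating set to be large, while the branching prevents isolating a constant fraction of $R_k$ cheaply. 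Running the $L^1$/co-area version of the combinatorial modulus estimate of Theorem~\ref{thm:pcritRT} in this subcritical range should in fact give the sharp bound $\Lambda^1_R(r)\gtrsim r^{1-1/Q}$ with $Q=\Confdim(\bdry R)$, in parallel with $\Lambda^p_{\HH_\R^n}(r)\simeq r^{1-1/(n-1)}$ for $p<n-1$, but any positive exponent will do. Since $R$ regularly embeds in $G$, monotonicity of $\Lambda^1$ under regular maps \cite{HumeMackTess-Pprof} then gives $\Lambda^1_G(r)\gtrsim\Lambda^1_R(r)\gtrsim r^\alpha$, completing the proof.

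I expect the main difficulty to be the interaction of the last two steps: the round tree one can build inside a general such $G$ is necessarily irregular, so the separation-profile lower bound must be proved robustly, with only one-sided control on the branching and width growth and without a self-similar model, while keeping the exponent strictly positive. In particular the passage to $p=1$ must not destroy positivity of the exponent --- it would, for a round tree whose boundary had conformal dimension exactly $1$ --- so the construction in the second step genuinely has to be shown to produce branching strong enough that $\Confdim(\bdry R)>1$ with a gap bounded below in terms of the data.
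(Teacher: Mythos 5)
Your overall strategy coincides with the paper's: reduce via Bowditch to the case where $\bdry G$ is connected with no local cut points, use the construction from \cite{Mac-10-confdim} to produce an irregular round tree regularly embedded in $G$, apply the subcritical $p=1$ round-tree bound, and finish by monotonicity of $\Lambda^1$ under regular maps. The genuine gap is at the central step, which you assert by citation: \cite{Mac-10-confdim} does \emph{not} produce a quasi-isometrically embedded round tree in $G$. What it produces is a ``Cantor set of quasi-arcs'' in $\bdry G$ --- arcs $J_{\ba}$, $\ba\in\{0,1\}^*$, each a local quasi-arc which follows its predecessor --- used there only to bound conformal dimension via modulus estimates on the boundary. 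Promoting this boundary data to a round tree graph in the sense of Definition~\ref{def:round-tree-graph} that is quasi-isometric to the hull $W$ of geodesic rays from the basepoint to these arcs is precisely Proposition~\ref{prop:QI-round-tree-graph}, and it is the technical heart of the paper's proof: one places $\beta^n/4\lambda$-separated nets $z_\ba^i$ on the arcs, lifts them to points $x_\ba^i\in G$ at depth $\asymp \frac{-1}{\epsilon}\log(\beta^n)$, and must then verify the round-tree axioms --- coarse surjectivity onto $W$ (Lemma~\ref{lem:cantorarcs0}), horizontal branching at least $2$ (via the quasi-arc and following properties, Lemmas~\ref{lem:cantorarcs1}--\ref{lem:cantorarcs3}) and at most a uniform $H$ (via doubling of $\bdry G$, Lemma~\ref{lem:cantorarcs3}) --- and prove that the resulting identification is a quasi-isometry. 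Your sketch (``two-sided branchings of uniformly thick arcs\dots iterating these builds the round tree'') does not engage with any of this, and since no reference supplies it, the proof is incomplete as written.

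You also mislocate the main difficulty. The robustness of the $L^1$ lower bound for \emph{irregular} round trees is not an issue: Proposition~\ref{prop:roundtree-lower} applies verbatim to any round tree graph with vertical branching $V\ge 2$ and horizontal branching in $[2,H]$, giving $\Lambda^1(r)\gtrsim r^{(Q-1)/(2Q-1)}$ with $Q=1+\frac{\log V}{\log H}>1$; once the bounded-branching structure is established, positivity of the exponent is automatic, and no separate argument that ``$\Confdim(\bdry R)>1$ with a gap bounded below'' is needed --- the uniform upper bound $H$ on horizontal branching, which comes from doubling, is exactly what provides it. (Your parenthetical expectation of the sharp bound $\Lambda^1_R(r)\gtrsim r^{1-1/Q}$ is also not what the subcritical argument yields --- it gives the weaker exponent $\frac{Q-1}{2Q-1}$ --- though, as you note, any positive exponent suffices.) The Fuchsian case and the Bowditch reduction in your first paragraph are fine and match the paper.
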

\begin{remark}
    An alternative, independent proof of Theorem~\ref{thm:power-sep-hyp-no-local-cut-points} has been announced by Bensaid--Genevois--Tessera~\cite{BenGenTes}.
\end{remark}
As a consequence, we strengthen Lazarovich--Le Coz \cite[Corollary 1.2]{Lazarovich-LeCoz-25} which shows the following result under the stronger assumption $\Lambda^1_G(r) \lesssim \log r$.
\begin{corollary}\label{cor:sep-less-than-powers-splits}
	If $G$ is a non-elementary hyperbolic group and $\Lambda^1_G(r) \not\gtrsim r^\alpha$ for any $\alpha>0$, then $G$ is virtually co-compact Fuchsian or splits over a finite or virtually cyclic subgroup.
\end{corollary}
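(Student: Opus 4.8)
The plan is to obtain Corollary~\ref{cor:sep-less-than-powers-splits} as an immediate consequence of Theorem~\ref{thm:power-sep-hyp-no-local-cut-points} by contraposition; no new argument beyond the dichotomy proved there is required. The one preliminary observation is that a non-elementary hyperbolic group $G$ is infinite (it is not virtually cyclic, in particular not finite), so Theorem~\ref{thm:power-sep-hyp-no-local-cut-points} applies to $G$ as soon as $G$ does not split over a finite or virtually cyclic subgroup.

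So let $G$ be non-elementary hyperbolic with $\Lambda^1_G(r)\not\gtrsim r^\alpha$ for every $\alpha>0$, and suppose $G$ does not split over a finite or virtually cyclic subgroup (otherwise the conclusion already holds). Then Theorem~\ref{thm:power-sep-hyp-no-local-cut-points} yields the dichotomy that either $G$ is virtually co-compact Fuchsian, or $\Lambda^1_G(r)\gtrsim r^\alpha$ for some $\alpha>0$; since the second alternative contradicts the standing hypothesis, $G$ is virtually co-compact Fuchsian, which is the desired conclusion. There is no subtlety in the quantifiers here: ``$\Lambda^1_G(r)\not\gtrsim r^\alpha$ for any $\alpha>0$'' is literally the negation of ``$\Lambda^1_G(r)\gtrsim r^\alpha$ for some $\alpha>0$'', and the only bookkeeping is to check that the elementary cases excluded by non-elementarity (finite, virtually cyclic) coincide with those excluded in Theorem~\ref{thm:power-sep-hyp-no-local-cut-points}.

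Finally, I would record briefly why this strengthens Lazarovich--Le Coz \cite[Corollary 1.2]{Lazarovich-LeCoz-25}: their hypothesis $\Lambda^1_G(r)\lesssim\log r$ implies ours, for if also $\Lambda^1_G(r)\gtrsim r^\alpha$ for some $\alpha>0$ then $r^\alpha\lesssim\log r$, which fails for large $r$. (The quantitative part of Theorem~\ref{thm:power-sep-hyp-no-local-cut-points} further shows that in the exceptional case one has $\Lambda^1_G(r)\simeq\log r$, so among hyperbolic groups not splitting over a finite or virtually cyclic subgroup the two hypotheses are in fact equivalent; the gain is the removal of the a priori gap between logarithmic and sub-polynomial separation.) I do not expect any genuine obstacle in this deduction: the ``hard part'' is entirely absorbed into having Theorem~\ref{thm:power-sep-hyp-no-local-cut-points} available, whose proof builds the irregular round trees needed for the lower bound $\Lambda^1_G(r)\gtrsim r^\alpha$ via the techniques of \cite{Mac-10-confdim}.
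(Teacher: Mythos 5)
Your proof is correct and matches the paper's intent exactly: the corollary is stated there as an immediate consequence of Theorem~\ref{thm:power-sep-hyp-no-local-cut-points}, obtained by precisely the contrapositive reading you give (non-elementary implies infinite, and the hypothesis is the literal negation of the theorem's second alternative). No further commentary is needed.
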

Combining this with existing structural results on conformal dimension, we have the following trichotomy.
\begin{corollary}\label{cor:hyp-trichotomy} Let $G$ be an infinite hyperbolic group, virtually with no $2$-torsion,  with boundary $\bdry G$. Exactly one of the following families of equivalent statements holds:
\begin{enumerate}
    \item $\Confdim(\bdry G)=0$; $\Lambda^1_G(r)\simeq 1$; $G$ is virtually free.
	\item $\Confdim(\bdry G)=1$; $\log(r)\lesssim \Lambda^1_G(r)\lesssim r^\alpha$ for every $\alpha>0$; $G$ decomposes as an iterated amalgam of finite and Fuchsian groups over finite and virtually cyclic subgroups but is not virtually free.
	\item $\Confdim(\bdry G)>1$; $\pcrit(G)>1$; $\Lambda^1_G(r)\gtrsim r^\alpha$ for some $\alpha>0$; $G$ has an infinite quasi-convex subgroup which is not virtually Fuchsian and does not split over finite or virtually cyclic subgroups.
\end{enumerate}
\end{corollary}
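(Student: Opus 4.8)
The plan is to organise the proof around the observation that $\Confdim(\bdry G)$ always lies in $\{0\}\cup\{1\}\cup(1,\infty)$: a compact totally disconnected space has conformal dimension $0$, a compact connected space with more than one point has conformal dimension at least $1$, and $\Confdim(\bdry G)=0$ precisely when $\bdry G$ is totally disconnected, precisely when $G$ is virtually free. So the three cases partition, and it is enough to show that within each row every listed assertion is equivalent to the stated value of $\Confdim(\bdry G)$. The main external inputs are: Theorem~\ref{thm:power-sep-hyp-no-local-cut-points}, applied to quasi-convex subgroups of $G$; Theorem~\ref{thm:crit-exponent-previous-upper-bound}, which (since $\Cone(\bdry G)$ is quasi-isometric to $G$) gives $\pcrit(G)\le\Confdim(\bdry G)$; the computation $\Lambda^1_{\HH^2}(r)\simeq\log r$ of \cite{HumeMackTess-Pprof}; and the structural theory of hyperbolic groups --- Dunwoody accessibility together with Bowditch's canonical splitting over virtually cyclic subgroups presents $G$ as a finite graph of groups with quasi-convex vertex groups, each finite, virtually cyclic, virtually cocompact Fuchsian, or one-ended and not splitting over a virtually cyclic subgroup --- together with the known structural fact that $\Confdim(\bdry G)\le1$ whenever every vertex group of this decomposition has conformal dimension at most $1$, and the known fact that an infinite hyperbolic group is virtually free or has $\Lambda^1(r)\gtrsim\log r$.

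Two routine Poincar\'e-profile observations handle the rest of the bookkeeping. First, monotonicity: a quasi-convex subgroup $H\le G$ includes via a regular map, so $\Lambda^p_H\lesssim\Lambda^p_G$ and $\pcrit(H)\le\pcrit(G)$. Second, a H\"older interpolation: for a bounded degree graph $X$, a finite subgraph $\Gamma$ and $p>1$, comparing the $\ell^1$- and $\ell^p$-Rayleigh quotients of a test function (using $\|f\|_1\ge\|f\|_p$ and $\|\nabla f\|_1\le|E(\Gamma)|^{1-1/p}\|\nabla f\|_p$) yields $h^1(\Gamma)\lesssim|\Gamma|^{1-1/p}h^p(\Gamma)$, hence $\Lambda^1_X(r)\lesssim r^{1-1/p}\Lambda^p_X(r)$. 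Since the set in \eqref{eq:pcrit} is an interval (Corollary~\ref{cor:crit-exponent-interval}), $\pcrit(G)=1$ forces $\Lambda^p_G(r)\lesssim r^{1-1/p}$ for every $p>1$, hence $\Lambda^1_G(r)\lesssim r^{2(1-1/p)}$ for every $p>1$, hence $\Lambda^1_G(r)\lesssim r^\epsilon$ for every $\epsilon>0$. Equivalently, $\Lambda^1_G(r)\gtrsim r^\alpha$ for some $\alpha>0$ implies $\pcrit(G)>1$.

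I would assemble the rows in the order $(3),(2),(1)$. \emph{Row $(3)$.} If $\Confdim(\bdry G)>1$, then by the structural theory the vertex groups cannot all be finite, virtually cyclic, or virtually cocompact Fuchsian (else $\Confdim(\bdry G)\le1$), so $G$ has a quasi-convex subgroup $H$ that is infinite, not virtually Fuchsian, and does not split over a finite or virtually cyclic subgroup; applying Theorem~\ref{thm:power-sep-hyp-no-local-cut-points} to $H$ (which, not splitting over a finite subgroup, is one-ended, so ``not virtually Fuchsian'' means ``not virtually cocompact Fuchsian'') gives $\Lambda^1_H(r)\gtrsim r^\alpha$ for some $\alpha>0$; monotonicity gives $\Lambda^1_G(r)\gtrsim r^\alpha$; the interpolation remark gives $\pcrit(G)>1$; and $\pcrit(G)\le\Confdim(\bdry G)$ closes the loop. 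So all four assertions in row $(3)$ are equivalent and each implies $\Confdim(\bdry G)>1$. \emph{Row $(2)$.} If $\Confdim(\bdry G)=1$ then $G$ is not virtually free, and by the structural theory together with row $(3)$ no vertex group of its decomposition is one-ended, non-virtually-Fuchsian, and non-splitting over a virtually cyclic subgroup; hence every vertex group is finite, virtually cyclic, or virtually Fuchsian, i.e.\ $G$ is an iterated amalgam of finite and Fuchsian groups over finite and virtually cyclic subgroups and is not virtually free. Conversely such a $G$ has $\Confdim(\bdry G)\le1$ by the structural fact and $\ge1$ since it is not virtually free. For the profile bounds, not being virtually free gives $\Lambda^1_G(r)\gtrsim\log r$, while $\pcrit(G)\le\Confdim(\bdry G)=1$ gives $\Lambda^1_G(r)\lesssim r^\epsilon$ for every $\epsilon>0$ by the interpolation remark. \emph{Row $(1)$.} We know $\Confdim(\bdry G)=0\iff G$ virtually free, and if $G$ is virtually free it is quasi-isometric to a tree so $\Lambda^1_G\simeq1$; conversely $\Lambda^1_G\simeq1$ is incompatible with $\Confdim(\bdry G)\ge1$, since rows $(2)$ and $(3)$ show that case forces $\Lambda^1_G\gtrsim\log r$. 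This gives the full trichotomy.

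The Poincar\'e-profile steps are routine given Theorem~\ref{thm:power-sep-hyp-no-local-cut-points}. The real work --- and the main obstacle --- is in assembling the structural facts about conformal dimension, chiefly the sharp statement (which needs the standing hypothesis that $G$ is virtually $2$-torsion free) that $\Confdim(\bdry G)\le1$ exactly when Bowditch's canonical decomposition of $G$ over finite and virtually cyclic subgroups has only finite, virtually cyclic, and virtually cocompact Fuchsian vertex groups. The delicate half is the upper bound: gluing Ahlfors-regular pieces of conformal dimension at most $1$ along a Bass--Serre tree does not push the conformal dimension above $1$. Here one must appeal to existing work (for the dichotomy ``rigid vertex group, or virtually Fuchsian'' one can instead cite Mackay's conformal dimension result \cite{Mac-10-confdim} directly and combine it with monotonicity of $\Confdim$ under quasi-convex subgroups), and the $2$-torsion hypothesis is what keeps Bowditch's splitting and its Fuchsian and virtually cyclic constituents well behaved.
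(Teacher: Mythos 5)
Your argument is correct, and its skeleton is the same as the paper's (whose proof is essentially a list of citations): Theorem~\ref{thm:power-sep-hyp-no-local-cut-points} applied to a rigid quasi-convex vertex group of the canonical splitting, the Carrasco--Mackay characterisation of conformal dimension one (this is exactly \cite[Corollary 1.2]{CarMac-22-confdim1}, which is where the virtually-no-$2$-torsion hypothesis enters), and Theorem~\ref{thm:crit-exponent-previous-upper-bound} to close the loop $\pcrit(G)>1\Rightarrow\Confdim(\bdry G)>1$. Where you genuinely diverge is in the Poincar\'e-profile bookkeeping: the paper obtains the bounds $\Lambda^1_G(r)\lesssim r^\alpha$ for all $\alpha>0$ in case (2), together with $\Lambda^1_G\simeq 1$ and $\Lambda^1_G(r)\gtrsim\log(r)$ in the other cases, by citing \cite[Theorem 6.1]{HumeMackTess-PprofLie}, whereas you derive the upper bound from $\pcrit(G)\le\Confdim(\bdry G)=1$ via the interpolation $\Lambda^1_X(r)\lesssim r^{1-1/p}\Lambda^p_X(r)$ --- the reverse-direction companion of Corollary~\ref{cor:lqlpweakbound}, and your H\"older argument for it (using the $h^p$-minimiser as a test function for $h^1$, with $\|f\|_1\geq\|f\|_p$ and $\|\nabla f\|_1\leq |E(\Gamma)|^{1-1/p}\|\nabla f\|_p$ and bounded degree) is valid --- which makes this step self-contained within the tools of Section~\ref{sec:crit-exp}; correspondingly you invoke the ``known fact'' that a non-virtually-free hyperbolic group has $\Lambda^1\gtrsim\log$, which is indeed true (e.g.\ via a one-ended quasi-convex vertex group of the Dunwoody decomposition, Bonk--Kleiner's quasi-isometrically embedded hyperbolic planes, and $\Lambda^1_{\HH^2}(r)\simeq\log(r)$), where the paper again relies on the cited theorem. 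One cosmetic slip: the quadratically hanging vertex groups of the JSJ are bounded Fuchsian groups, typically virtually free rather than ``virtually cocompact Fuchsian''; this does not affect your argument, since only the rigid vertex groups are fed into Theorem~\ref{thm:power-sep-hyp-no-local-cut-points} and the structural input you need is exactly the cited Carrasco--Mackay statement.
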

\begin{proof}
	Follows from Theorem~\ref{thm:power-sep-hyp-no-local-cut-points}, \cite[Corollary 1.2]{CarMac-22-confdim1} (see \cite[Corollary 1.3]{Mac-10-confdim}), \cite[Theorem 6.1]{HumeMackTess-PprofLie} (case $d=0$).
\end{proof}
We also have a non-embedding consequence, as separation profiles are monotone under coarse embeddings.
\begin{corollary}\label{cor:non-embed-confdim-close-to-1}
	If $G$ is a hyperbolic group (virtually with no $2$-torsion) having conformal dimension $>1$, then $\Lambda^1_G(r) \gtrsim r^{\alpha}$ for some $\alpha\in (0,1)$, hence $G$ does not coarsely (or regularly) embed into any hyperbolic group with conformal dimension $< 1/(1-\alpha)$.
\end{corollary}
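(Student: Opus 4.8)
The plan is to deduce this directly from the trichotomy of Corollary~\ref{cor:hyp-trichotomy}, using only two soft facts: the monotonicity of separation profiles under regular maps (recorded in the introduction), and the upper bound for the Poincar\'e profiles of a hyperbolic group in terms of the conformal dimension of its boundary (\cite[Theorem 6.1]{HumeMackTess-PprofLie}, case $d=0$, exactly as invoked in the proof of Corollary~\ref{cor:hyp-trichotomy}). First I would produce $\alpha$: since $\Confdim(\bdry G)>1$ the group $G$ is infinite, so the hypothesis that $G$ virtually has no $2$-torsion puts us in case (3) of Corollary~\ref{cor:hyp-trichotomy}, which gives $\Lambda^1_G(r)\gtrsim r^{\alpha_0}$ for some $\alpha_0>0$. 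As $\Lambda^1_G(r)\gtrsim r^{\alpha_0}$ forces $\Lambda^1_G(r)\gtrsim r^\alpha$ for every $\alpha\in(0,\alpha_0]$, we may fix $\alpha\in(0,1)$ with $\Lambda^1_G(r)\gtrsim r^\alpha$.

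For the non-embedding conclusion, suppose for contradiction that $G$ admits a coarse or regular embedding into a hyperbolic group $H$ with $Q:=\Confdim(\bdry H)<1/(1-\alpha)$. A coarse embedding is a regular map, and $\Lambda^1$ is monotone under regular maps between bounded degree graphs, so $\Lambda^1_G\lesssim\Lambda^1_H$ and hence $\Lambda^1_H(r)\gtrsim r^\alpha$. If $Q<1$, then $H$ is virtually free (a hyperbolic group whose boundary has conformal dimension less than $1$ is virtually free), so $\Lambda^1_H$ is bounded, contradicting $\Lambda^1_H(r)\gtrsim r^\alpha$. Otherwise $Q\ge 1$, and as $Q<1/(1-\alpha)$ we may choose $Q'$ with $Q<Q'<1/(1-\alpha)$; then the upper bound on Poincar\'e profiles of hyperbolic groups gives $\Lambda^1_H(r)\lesssim r^{1-1/Q'}$. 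Since $t\mapsto 1-1/t$ is increasing and $Q'<1/(1-\alpha)$, we have $1-1/Q'<\alpha$, so $r^\alpha\lesssim\Lambda^1_H(r)\lesssim r^{1-1/Q'}$ with $1-1/Q'<\alpha$ — a contradiction. This proves the corollary.

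The only genuine content here is the lower bound $\Lambda^1_G(r)\gtrsim r^\alpha$, that is, Theorem~\ref{thm:power-sep-hyp-no-local-cut-points} combined with the structural results packaged into Corollary~\ref{cor:hyp-trichotomy}; everything after that is formal. So the real difficulty lies upstream, in the irregular round-tree construction behind Theorem~\ref{thm:power-sep-hyp-no-local-cut-points}, not in this corollary. The one point requiring a little care is the degenerate case $Q<1$: there one cannot apply $\Lambda^1_H(r)\lesssim r^{1-1/Q'}$ with $Q'\le 1$ (the estimate fails), and must instead use that such an $H$ is virtually free. One could alternatively phrase the argument through the critical exponent and Theorem~\ref{thm:crit-exponent-previous-upper-bound}, but the separation-profile formulation above is the most direct.
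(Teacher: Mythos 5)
Your argument is correct and is essentially the paper's proof: the lower bound $\Lambda^1_G(r)\gtrsim r^\alpha$ comes from Theorem~\ref{thm:power-sep-hyp-no-local-cut-points} via Corollary~\ref{cor:hyp-trichotomy}, and the non-embedding follows from monotonicity of $\Lambda^1$ under regular maps together with the upper bound $\Lambda^1_H(r)\lesssim r^{\alpha-\varepsilon}$ from \cite[Theorem 6.1]{HumeMackTess-PprofLie} when $\Confdim(\bdry H)<1/(1-\alpha)$. Your separate treatment of $Q<1$ is fine but not needed: since $1/(1-\alpha)>1$ you may always pick $Q'$ with $\max\{Q,1\}<Q'<1/(1-\alpha)$ and apply the same estimate $\Lambda^1_H(r)\lesssim r^{1-1/Q'}$.
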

\begin{proof}
	By \cite[Theorem 6.1]{HumeMackTess-PprofLie}, if $H$ is hyperbolic with $\Confdim(\bdry H) < \frac{1}{1-\alpha}$, then there exists some $\varepsilon>0$ such that $\Lambda_H^1(r) \lesssim r^{\alpha-\varepsilon}$, so $G$ does not coarsely (or regularly) embed into $H$.
\end{proof}
This is evidence towards a positive answer to Question~\ref{qu:regmap-confdim-monotone}.

Beyond recognising whether conformal dimension is equal to $1$ or greater, calculating the $1$-Poincar\'e profiles of groups of conformal dimension $1$ is independently interesting. It is known that the hyperbolic plane has logarithmic $1$-Poincar\'e profile (or equivalently, separation profile), while Lazarovich--Le Coz construct a hyperbolic group of conformal dimension one whose $1$-Poincar\'e profile is superlogarithmic \cite{Lazarovich-LeCoz-25}. We may then ask for a classification.
\begin{question}\label{qu:logsep}
	Classify which hyperbolic groups $G$ have separation $\Lambda_G^1(r) \simeq \log(r)$.
Are these the groups with well-separated boundaries (see~\cite{Carrasco14-WS-hyp-confdim1})?
\end{question}
As a partial step towards this goal we add many new groups to this class.

We state the following theorem in terms of JSJ decompositions in the sense of Guirardel--Levitt~\cite{GuirardelLevitt-17-JSJ}, however, we do not need to work directly with the definition in this paper.
\begin{theorem}\label{thm:logsep}
	Let $G$ be a one-ended hyperbolic group which has a JSJ decomposition over $2$-ended subgroups consisting only of quadratically hanging and cylindrical vertex groups.
	Then $\Lambda^1_{G}(r) \simeq \log(r)$.
\end{theorem}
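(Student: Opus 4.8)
The plan is to establish both directions of the equivalence $\Lambda^1_G(r) \simeq \log(r)$ separately. The lower bound $\Lambda^1_G(r) \gtrsim \log(r)$ is automatic: $G$ is one-ended and non-elementary, hence contains a regularly embedded $T_3$ or, more directly, since it's infinite hyperbolic one knows $\Lambda^1_G(r) \gtrsim \log(r)$ from the presence of quasi-isometrically embedded copies of $\HH^2$ (quadratically hanging vertices are virtually surface groups) or simply from the general fact that infinite groups have superconstant separation unless they are one or two ended in a degenerate way. So the real content is the upper bound $\Lambda^1_G(r) \lesssim \log(r)$.

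For the upper bound I would use the Bowditch--Guirardel--Levitt JSJ tree $T$ over $2$-ended subgroups, on which $G$ acts cocompactly with quadratically hanging (QH) and cylindrical (hence $2$-ended, virtually cyclic) vertex stabilizers and $2$-ended edge stabilizers. The strategy is a decomposition/surgery argument on separating sets: given a finite subgraph $\Omega \subseteq G$ of size $\leq r$ and a subset $S$ separating $\Omega$ into pieces each of size $\leq |\Omega|/2$, one wants to bound $|S|$ by $C\log r$. The idea is to transfer the separating problem along the tree of spaces. Each vertex group is either a surface-with-boundary group (well-understood: these have $\Lambda^1 \simeq \log$, being virtually Fuchsian with boundary circle or arcs) or virtually cyclic (with $\Lambda^1 \simeq 1$), and the edge groups gluing them are virtually cyclic, so each ``gluing locus'' is coarsely a line and can be cut with $O(\log r)$ vertices. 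The combinatorial structure here is controlled by the quasi-isometry to a tree of spaces; one needs that a ball of radius $R$ in $G$ meets only exponentially many translates of vertex spaces but that a geodesic between two points passes through at most $O(R)$ edge spaces, so a Benjamini--Schramm--Tim\'ar type ``cutting'' can be localized.

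More concretely, I would proceed as follows. First, set up $G$ as quasi-isometric to a tree of spaces $X = \bigcup X_v \cup \bigcup X_e \times [0,1]$ over the JSJ tree, with $X_v$ either a hyperbolic plane (or a disk, for QH with boundary) with a uniformly locally finite structure, and $X_e$ a bi-infinite line. Second, cite or re-derive that each QH vertex space, being quasi-isometric to a convex subset of $\HH^2$, has $\Lambda^1(r) \lesssim \log r$, using the known computation for $\HH^2$ (this is in \cite{BenSchTim-12-separation-graphs}, and follows also from \cite[Theorem 6.1]{HumeMackTess-PprofLie} with $d=0$). Third, the key combinatorial lemma: if $Y$ is obtained by gluing spaces $Y_i$ with $\Lambda^1_{Y_i}(r) \lesssim \log r$ along quasi-lines in a tree pattern, with the property (coming from hyperbolicity and acylindricity of the action) that any radius-$R$ ball in $Y$ meets boundedly many $Y_i$ at ``depth'' controlled logarithmically, then $\Lambda^1_Y(r) \lesssim \log r$. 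To prove this lemma I would take a finite $\Omega$, look at which $Y_i$ it meets, build an auxiliary tree recording these, find a balanced separator in that auxiliary tree cutting it into few components (a constant number, by a standard tree-separator argument), realize each such tree-cut by cutting the corresponding finite collection of edge-lines — each contributing $O(\log r)$ — and then recurse into the boundedly many subpieces, where within each $Y_i$ one applies the $\log$ bound directly. Controlling the recursion depth is where hyperbolicity is essential: depth is $O(\log r)$ because the JSJ tree is acted on by a hyperbolic group, so nested vertex spaces separate exponentially.

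The main obstacle will be step three, the gluing lemma, and specifically ensuring the recursion terminates with only a logarithmic total cost rather than a polynomial one: naively recursing into each vertex space and summing $\log r$ over the potentially polynomially-many vertex spaces met by $\Omega$ would be fatal. The resolution must exploit that one does not separate \emph{within} every vertex space met, but rather first separates the \emph{tree} of vertex spaces with a bounded-size tree-separator (cutting along $O(1)$ edge-lines, total cost $O(\log r)$), so that the recursion branches only boundedly and has depth $O(\log r)$, giving total cost $O(\log r) \cdot O(1)^{O(\log r)}$ — still too big unless the branching factor is exactly controlled. The honest fix is that the recursion should not branch at all in the relevant sense: a single tree-separator of $O(1)$ edges already balances $\Omega$ among subtrees of size $\leq |\Omega|/2$, and then one inducts on $|\Omega|$, giving a clean $\Lambda^1_G(r) \lesssim \log r$ by the usual divide-and-conquer with logarithmic depth and constant cost per level. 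Making the ``balanced tree-separator of bounded size'' claim precise — that in a tree of spaces over a hyperbolic-group JSJ tree, any finite subset can be split into halves by removing $O(\log|\Omega|)$ vertices concentrated along $O(1)$ edge-spaces — is the technical heart, and I expect it to require careful use of the cocompactness of the action and a Helly-type or centroid argument on the JSJ tree.
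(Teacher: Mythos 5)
Your overall shape --- reduce to the JSJ/graph-of-groups structure and prove the upper bound $\Lambda^1_G(r)\lesssim\log r$ by exhibiting a single balanced cut --- is reasonable, and the lower bound remark is fine. But the technical heart of your plan, the ``balanced tree-separator of bounded size along edge spaces'' lemma, has genuine gaps that I do not see how to repair in the form you state it. First, separating the ambient group along an edge space is not an $O(\log r)$ operation: an edge space is a coarse line, and to disconnect the tree of spaces along it you must remove (the portion of $\Omega$ within bounded distance of) a whole section of that line across the region spanned by $\Omega$; if $\Omega$ clusters along one attaching line this costs $\asymp|\Omega|$, not $\log|\Omega|$, and your proposal contains no mechanism for choosing cuts that \emph{avoid} $\Omega$. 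Second, when $\Omega$ is concentrated deep inside a single quadratically hanging vertex space, no cut along edge spaces separates it at all, and a cut made inside that vertex space using its intrinsic logarithmic separation need not separate the ambient space, since paths may exit through an attaching line and re-enter on the other side of the cut; your recursion does not address this. (Incidentally, no recursion is needed: by definition $\cut^{\varepsilon}$ asks for a single balanced cut, so the cost analysis ``$O(\log r)\cdot O(1)^{O(\log r)}$'' is measuring the wrong quantity --- the real problem is constructing that one cut.)

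What the paper does instead, and what is missing from your outline, is precisely a mechanism making the cut avoid $Z$. After passing (via Shepherd--Woodhouse) to a finite-index subgroup which is $\pi_1$ of a $\CAT(-1)$ complex obtained by gluing compact hyperbolic surfaces along totally geodesic boundary circles, one chooses a centrepoint $z$ of the finite set $Z$ by a Helly-type argument in the $\CAT(0)$ universal cover $U$, and builds the cut as a union of boundedly many geodesics transverse to the attaching lines: each complementary component of the lifted attaching curves is a convex subset of $\HH^2$ quasi-isometric to a $3$-regular tree, so inside a fixed wedge at $z$ there are $\geq\lambda\exp(\lambda R)$ candidate rays avoiding the attaching lines, any two of which have $3$-neighbourhoods meeting only inside $B(z,R)$; taking $R\asymp\log r$ and pigeonholing over $|Z|\leq r$ produces rays whose $3$-neighbourhoods meet $Z$ only inside $B(z,R)$. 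The resulting cut tree $C$ has convex complementary components (so the centrepoint property gives that each component contains at most $\tfrac23|Z|$ points), and $|N_3(C)\cap Z|\lesssim\log r$. Without some version of this ``exponentially many candidate cuts, so one of them misses most of $Z$'' idea --- or an equivalent way of making the cut cost depend on $\log r$ rather than on the diameter of $\Omega$ --- your divide-and-conquer scheme cannot close.
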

	An example of such a group is shown in Figure~\ref{fig:glue-three-punctured-tori-on-boundary}, and indeed this is typical: up to taking finite index subgroups, the groups considered in Theorem~\ref{thm:logsep} are exactly the fundamental groups of spaces found by gluing a collection of compact hyperbolic surfaces together along totally geodesic boundary components.

	This example further shows that the separation profile does not detect the attainment of conformal dimension $1$: $\pi_1(X)$ does not attain its conformal dimension of $1$ (see discussion and references in \cite{CarMac-22-confdim1}), but it has the same separation profile as groups which do, namely all the virtually co-compact Fuchsian groups.
    (Note that these groups cannot be distinguished by their other Poincar\'e profiles: any hyperbolic group $G$ with $\Confdim(\bdry G)=1$ has a quasi-isometrically embedded $3$-regular tree and so satisfies $\Lambda_G^p(r) \simeq r^{1-\frac1p}$ for all $p>1$ by \cite[Theorem 6.1]{HumeMackTess-PprofLie}.)

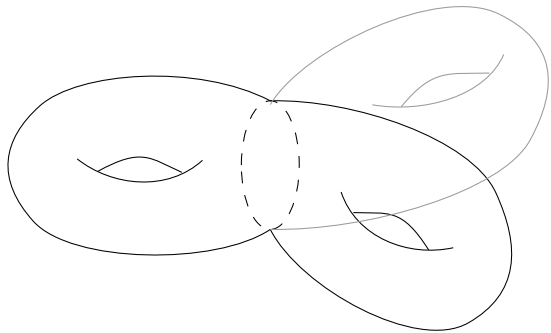
\begin{figure} 
\begin{tikzpicture}[x=0.75pt,y=0.75pt,yscale=-.8,xscale=.8]
\draw  [dash pattern={on 4.5pt off 4.5pt}] (294.5,138.57) .. controls (294.5,116.16) and (302.67,98) .. (312.75,98) .. controls (322.83,98) and (331,116.16) .. (331,138.57) .. controls (331,160.97) and (322.83,179.13) .. (312.75,179.13) .. controls (302.67,179.13) and (294.5,160.97) .. (294.5,138.57) -- cycle ;
\draw    (312.75,98) .. controls (267,74.13) and (191.5,79.63) .. (167.5,101.13) .. controls (143.5,122.63) and (139.5,147.63) .. (163.5,174.13) .. controls (187.5,200.63) and (278.5,202.13) .. (312.75,179.13) ;
\draw    (191,134.63) .. controls (213,152.13) and (248,155.63) .. (270,135.5) ;
\draw    (204,142.5) .. controls (232,127.13) and (237,134.13) .. (257,143.13) ;
\draw    (312.75,98) .. controls (365.46,96.67) and (439.5,122.13) .. (455,156.13) .. controls (470.5,190.13) and (470,219.63) .. (437.5,238.13) .. controls (405,256.63) and (331.61,216.09) .. (312.75,179.13) ;
\draw    (428.1,190.68) .. controls (399.72,196.92) and (366.83,184.23) .. (357.41,155.51) ;
\draw    (412.57,192.07) .. controls (395.42,165.05) and (387.45,169.27) .. (365.13,168.52) ;
\draw [color={rgb, 255:red, 155; green, 155; blue, 155 }  ,draw opacity=1 ]   (312.75,179.09) .. controls (373.46,180.37) and (458.75,155.65) .. (476.6,122.64) .. controls (494.45,89.63) and (493.88,60.99) .. (456.44,43.03) .. controls (419.01,25.07) and (334.48,64.43) .. (312.75,100.31) ;
\draw [color={rgb, 255:red, 155; green, 155; blue, 155 }  ,draw opacity=1 ]   (377.08,100.51) .. controls (410.33,106.14) and (448.87,94.68) .. (459.9,68.73) ;
\draw [color={rgb, 255:red, 155; green, 155; blue, 155 }  ,draw opacity=1 ]   (395.27,101.76) .. controls (415.37,77.35) and (424.71,81.16) .. (450.86,80.49) ;
\end{tikzpicture}
	\caption{A space $X$ with $\Lambda_{\pi_1(X)}^1(r)\simeq\sep_{\pi_1(X)}(r) \simeq \log(r)$.}	
	\label{fig:glue-three-punctured-tori-on-boundary}
\end{figure}

\subsection{Outline}
In Section~\ref{sec:crit-exp} we clarify the role of the critical exponent for Poincar\'e profiles.  In Sections~\ref{sec:round-trees} and \ref{sec:reg-round-trees} we introduce round trees and compute non-trivial bounds on their Poincar\'e profiles, which are applied to various examples in Section~\ref{sec:applications-reg-round-trees}.  In Section~\ref{sec:no-local-cut-points} we prove Theorem~\ref{thm:power-sep-hyp-no-local-cut-points} for hyperbolic groups whose boundaries have no local cut points, and in Section~\ref{sec:logupper} we prove Theorem~\ref{thm:logsep} giving new examples of groups with logarithmic separation.

\subsection{Notation}
As mentioned above, for functions $f, g$ we write $f \lesssim g$ if there exists $C>0$ so that $f(r) \leq Cg(Cr+C)+C$ for all $r$, and $f \simeq g$ if $f \lesssim g$ and $g \lesssim f$.
For any $x,y \in \R$ we write $x \preceq y$ if there exists $C>0$ (independent of relevant parameters) so that $x \leq Cy$, and $x \asymp y$ if $x \preceq y$ and $y\preceq x$.

\subsection*{Acknowledgements}
The first author was supported by the EPSRC grant EP/V027360/1 ``Coarse geometry of groups and spaces''. The second author thanks Chris Cashen and Sam Shepherd for helpful comments.

\section{Critical exponents}
\label{sec:crit-exp}

We collect some auxilliary observations which clarify the importance of the critical exponent for Poincar\'e profiles (Definition~\ref{def:profile-crit}).
\begin{lemma}\label{lem:lplq-bounds}
    For any vector $\ba \in \R^r$, and any $q \geq p \geq 1$,
	$\| \ba \|_q \leq \| \ba \|_p \leq r^{\frac{1}{p}-\frac{1}{q}} \| \ba \|_q$.
\end{lemma} 
\begin{proof}
  Write $\ba = (a_i)$.  We may assume all $a_i \geq 0$.  Then
  \begin{align*}
	\left(\sum a_i^q \right)^{1/q}
	& \leq \left( \left[\max a_i^{q-p}\right] \left[\sum a_i^p \right]\right)^{1/q} 
	\\ & \leq \left( \left[\sum a_i^p \right]^{(q-p)/p + 1}\right)^{1/q}
	 = \left(\sum a_i^p \right)^{1/p}
	  \\ & \leq \left( \left[\sum a_i^{pq/p}\right]^{p/q} r^{1-p/q}\right)^{1/p}
	 = \left(\sum a_i^q \right)^{1/q} r^{1/p-1/q}. \qedhere
  \end{align*}
\end{proof}
\begin{corollary}\label{cor:lqlpweakbound}
	Let $X$ be a bounded degree graph. For any $q \geq p \geq 1$ we have $\Lambda^q_X(r) \lesssim r^{1/p-1/q} \Lambda^p_X(r)$.
\end{corollary}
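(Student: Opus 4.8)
The plan is to establish Corollary~\ref{cor:lqlpweakbound} by transferring the pointwise vector inequality from Lemma~\ref{lem:lplq-bounds} to the Poincar\'e constants. The key observation is that for a finite subgraph $\Gamma \leq X$ with $|\Gamma| \leq r$, the relevant vectors live in spaces of dimension at most $r$: a function $f: V(\Gamma) \to \R$ is an element of $\R^{|V(\Gamma)|}$ with $|V(\Gamma)| \leq r$, and $|\nabla f|$ is an element of $\R^{|E(\Gamma)|}$. Since $X$ has bounded degree, say by $D$, we have $|E(\Gamma)| \leq \frac{D}{2}|V(\Gamma)| \leq \frac{D}{2}r$, which is still $\preceq r$; this is where the bounded degree hypothesis enters.

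First I would fix $q \geq p \geq 1$, a finite subgraph $\Gamma \leq X$ with $n := |V(\Gamma)| \leq r$, and an admissible function $f: V(\Gamma) \to \R$ with $\sum_v f(v) = 0$, $f \not\equiv 0$. The goal is to compare $\|\nabla f\|_q / \|f\|_q$ with $\|\nabla f\|_p / \|f\|_p$. Applying Lemma~\ref{lem:lplq-bounds} to the vector $f \in \R^n$ gives $\|f\|_q \leq \|f\|_p$, so in particular $\frac{1}{\|f\|_q} \geq \frac{1}{\|f\|_p}$ pushes the wrong way; instead I apply Lemma~\ref{lem:lplq-bounds} to the numerator vector $|\nabla f| \in \R^{|E(\Gamma)|}$ to get $\|\nabla f\|_q \leq \|\nabla f\|_p$, and to the denominator vector $f \in \R^n$ to get $\|f\|_p \leq n^{1/p - 1/q} \|f\|_q$, hence $\frac{1}{\|f\|_q} \leq \frac{n^{1/p-1/q}}{\|f\|_p}$. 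Combining,
\[
\frac{\|\nabla f\|_q}{\|f\|_q} \leq n^{\frac1p - \frac1q}\, \frac{\|\nabla f\|_p}{\|f\|_p} \leq r^{\frac1p - \frac1q}\, \frac{\|\nabla f\|_p}{\|f\|_p}.
\]
Taking the infimum over all admissible $f$ on $\Gamma$ yields $h^q(\Gamma) \leq r^{1/p - 1/q}\, h^p(\Gamma)$.

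Next I would multiply through by $|\Gamma|$ and take the supremum over all $\Gamma \leq X$ with $|\Gamma| \leq r$: since $|\Gamma|\, h^q(\Gamma) \leq r^{1/p-1/q}\, |\Gamma|\, h^p(\Gamma) \leq r^{1/p-1/q}\, \Lambda^p_X(r)$ for each such $\Gamma$, we obtain $\Lambda^q_X(r) \leq r^{1/p-1/q}\, \Lambda^p_X(r)$, which is even stronger than the claimed $\lesssim$ (no need to absorb constants). I expect no serious obstacle here; the only point requiring a moment's care is confirming that the ambient dimension of the gradient vector is controlled by $r$ rather than, say, $r^2$ — but bounded degree handles this, and in any case only the denominator dimension $n \leq r$ is actually used in the estimate above, so even this is not strictly needed. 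A secondary subtlety is the degenerate case $\|f\|_q = 0$ or $\|f\|_p = 0$, but the admissibility condition $f \not\equiv 0$ rules this out since on a finite set all $\ell^p$ norms vanish only on the zero vector.
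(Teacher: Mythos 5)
Your proof is correct and takes essentially the same route as the paper: apply Lemma~\ref{lem:lplq-bounds} to the gradient vector (giving $\|\nabla f\|_q \leq \|\nabla f\|_p$) and to $f$ itself (giving $\|f\|_p \leq r^{1/p-1/q}\|f\|_q$), deduce $h^q(\Gamma) \leq r^{1/p-1/q}\,h^p(\Gamma)$, and then pass to the profiles. The only cosmetic difference is that the paper fixes an optimal subgraph $\Gamma$ and an optimal test function $f$ rather than arguing with arbitrary ones and taking infima and suprema at the end.
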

\begin{proof}
	Choose a subgraph $\Gamma$ with $\Lambda^q_X(r)=|\Gamma| h^q(\Gamma)$,
	and a non-constant function $f:V(\Gamma)\to\R$ with sum zero and $h^p(\Gamma) = \|\nabla f\|_p/ \|f\|_p$.

  By Lemma~\ref{lem:lplq-bounds},
  \begin{align*}
	\Lambda^q_X(r)
	& \leq |\Gamma |\, h^q(\Gamma)
	  \leq |\Gamma |\, \frac{\|\nabla f\|_q}{\|f\|_q}
	  \leq |\Gamma |\, r^{1/p-1/q} \frac{\|\nabla f\|_p}{\|f\|_p}
	\\ & = r^{1/p-1/q}\, |\Gamma|\, h^p(\Gamma) 
	\leq r^{1/p-1/q} \Lambda^p(X). \qedhere
  \end{align*}
\end{proof}

Since the $3$-regular tree $T_3$ has $\Lambda_{T_3}^p(r) \simeq r^{1-\frac{1}{p}}$~\cite[Theorem 10.1]{HumeMackTess-Pprof}, we observe the following.

\begin{corollary}\label{cor:crit-exponent-interval}
	If $X$ is a bounded degree graph, then
	\[
    \left\{ p \in [1,\infty) : \Lambda_X^p(r) \lesssim r^{1-\frac{1}{p}}  \right\}
	\]
	is an interval of the form $(\pcrit(X),\infty)$, $[\pcrit(X),\infty)$ or $\emptyset$ (with $\pcrit(X)=\infty$).

Moreover, if $X$ contains a regularly embedded $3$-regular tree, then
	\[
		\left\{ p \in [1,\infty) : \Lambda_X^p(r) \simeq r^{1-\frac{1}{p}}  \right\}
        = \left\{ p \in [1,\infty) : \Lambda_X^p(r) \lesssim r^{1-\frac{1}{p}}  \right\}.
	\]
\end{corollary}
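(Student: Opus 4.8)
\textbf{Proof plan for Corollary~\ref{cor:crit-exponent-interval}.}

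The plan is to establish the two assertions separately. For the first, I want to show that the set $S(X) := \{ p \in [1,\infty) : \Lambda_X^p(r) \lesssim r^{1-\frac{1}{p}} \}$ is "upward closed": if $p \in S(X)$ and $q \geq p$, then $q \in S(X)$. This is where Corollary~\ref{cor:lqlpweakbound} does the work. Assume $p \in S(X)$, so $\Lambda^p_X(r) \lesssim r^{1-\frac1p}$. Then for $q \geq p$, Corollary~\ref{cor:lqlpweakbound} gives $\Lambda^q_X(r) \lesssim r^{\frac1p - \frac1q}\Lambda^p_X(r) \lesssim r^{\frac1p - \frac1q}\cdot r^{1-\frac1p} = r^{1-\frac1q}$, so $q \in S(X)$. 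Hence $S(X)$ is an interval unbounded above (or empty), and by definition $\pcrit(X) = \inf S(X)$ (with the convention $\inf \emptyset = \infty$), so $S(X)$ is one of $(\pcrit(X),\infty)$, $[\pcrit(X),\infty)$, or $\emptyset$. One should note that the $\simeq$ and $\lesssim$ relations interact correctly with the substitution $q \mapsto p$ in the exponent — i.e.\ that $r^{\frac1p-\frac1q}\cdot r^{1-\frac1p} \simeq r^{1-\frac1q}$ is a genuine equality of functions up to the constants hidden in $\lesssim$, which it is since these are honest power functions and $\lesssim$ allows rescaling $r \mapsto Cr$.

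For the second assertion, write $S'(X) := \{ p \in [1,\infty) : \Lambda_X^p(r) \simeq r^{1-\frac1p} \}$. The inclusion $S'(X) \subseteq S(X)$ is immediate since $\simeq$ implies $\lesssim$, and this holds for any bounded degree graph. For the reverse inclusion, suppose $X$ contains a regularly embedded copy of $T_3$. Since $\Lambda_{T_3}^p(r) \simeq r^{1-\frac1p}$ by \cite[Theorem 10.1]{HumeMackTess-Pprof}, and regular maps are monotone for Poincar\'e profiles (so $\Lambda_{T_3}^p \lesssim \Lambda_X^p$), we get the lower bound $r^{1-\frac1p} \lesssim \Lambda_X^p(r)$ for every $p \in [1,\infty)$. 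Combining this with the upper bound $\Lambda_X^p(r) \lesssim r^{1-\frac1p}$ that holds precisely when $p \in S(X)$, we conclude $\Lambda_X^p(r) \simeq r^{1-\frac1p}$, i.e.\ $p \in S'(X)$. Thus $S(X) \subseteq S'(X)$, giving equality.

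I do not expect any serious obstacle here: both parts are essentially bookkeeping with the $\lesssim$/$\simeq$ calculus, with the only genuine inputs being Corollary~\ref{cor:lqlpweakbound} (already proved) and the two cited facts about $T_3$ and the monotonicity of profiles under regular maps (both recalled in the introduction). The one point requiring a little care is making sure the exponent arithmetic is compatible with the asymptotic equivalence — in particular that multiplying the bound $\Lambda^p_X(r) \lesssim r^{1-1/p}$ by $r^{1/p-1/q}$ really does land at $r^{1-1/q}$ up to the constants allowed by $\lesssim$, and that nothing goes wrong at the endpoint $p = q$ (where it is a tautology) — but this is routine.
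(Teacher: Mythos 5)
Your argument is correct and is exactly the one the paper intends: the upward closedness follows from Corollary~\ref{cor:lqlpweakbound}, and the ``moreover'' part from the lower bound $\Lambda_{T_3}^p(r)\simeq r^{1-\frac1p}$ together with monotonicity of Poincar\'e profiles under regular maps. The paper leaves this as an observation without a written proof, and your write-up fills it in along the same lines.
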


There are examples of all three types of interval in the corollary: for $\HH^3$ this set is $(2,\infty)$, 
for the union of a tree and $\Z^2$ it is $[2,\infty)$,
and for $\HH^3\times\R$ the set is $\emptyset$.
Recall that if $X$ is a non-elementary hyperbolic group this set is non-empty since we have $\pcrit(X) \leq \Confdim(\bdry X) < \infty$, see Theorem~\ref{thm:crit-exponent-previous-upper-bound}.

\section{Round trees}
\label{sec:round-trees}

Gromov introduced ``round trees'' as negatively curved $2$-complexes admitting an isometric $S^1$ action with a single fixed point, so that there is an isometrically embedded tree which meets every fibre of the action at a single point~\cite[\S 7.C$_3$]{Gromov93}.
The second author used a more combinatorial version of this construction in the study of random groups~\cite{Mac-12-random-cdim, Mac-16-confdim-subcplxs}.

The first definition we have of a round tree is as follows,
\begin{definition}[{Combinatorial round tree \cite[Definition 7.1]{Mac-16-confdim-subcplxs}}]
	\label{def:comb-round-tree-complex}
	For $2 \leq H_{\min} \leq H_{\max}$, a \emph{combinatorial round tree} with vertical branching $V$ and horizontal branching $\in [H_{\min}, H_{\max}]$ is a polygonal $2$-complex $A$ where, setting $T = \{1,2,\ldots, V\}$, we can write
	\[ A = \bigcup_{\ba \in T^\N} A_\ba \]
	with the following properties.
	\begin{enumerate}
		\item $A$ has a base point $1$ contained in the boundary of a unique $2$-cell $A_\emptyset \subset A$.
		\item Each $A_\ba$ is an infinite planar $2$-complex homeomorphic to a half-plane whose boundary is the union of two rays $L_\ba$ and $R_\ba$ with $L_\ba \cap R_\bbb = L_\bbb \cap R_\ba = \{1\}$ for all $\ba,\bbb \in T^\N$.
		\item Set $A_0 = A_\emptyset$ and $A_n = \St(A_{n-1})$ for $n>0$.  
			For $\ba=(a_1,a_2,\ldots) \in T^\N$ let $\ba_n = (a_1,\ldots,a_n) \in T^n$.
			If $\ba,\bbb \in T^\N$ with $\ba_n=\bbb_n$ and $\ba_{n+1}\neq \bbb_{n+1}$ then
			\[ 
				A_n \cap A_\ba \subset A_\ba \cap A_\bbb \subset A_{n+1}\cap A_\ba.
			\]
		\item
			For each $\ba \in T^\N$ and $n \in \N$, going from $L_\ba$ to $R_\ba$, associate each $2$-cell in $A_\ba \cap (A_{n+1}\setminus A_{n})$ to the first $2$-cell $R \subset A_\ba \cap (A_n\setminus A_{n-1})$ with which it shares a $1$-cell, or if no such $R$ exists then the first with which it shares a $0$-cell.  Then each $R \subset A_\ba \cap (A_n \setminus A_{n-1})$ is associated with between $H_{\min}$ and $H_{\max}$ $2$-cells of $A_\ba \cap (A_{n+1}\setminus A_{n})$.
	\end{enumerate}
\end{definition}
Such complexes are Gromov hyperbolic.
One tricky feature of this definition is that (as is necessary in some applications), a $2$-cell in $A_n\setminus A_{n-1}$ can share $1$-cells with two different $2$-cells in $A_{n-1}$.  By adjusting the complex slightly to remove this ambiguity one gets a quasi-isometrically equivalent complex.  For this complex, if one takes the (quasi-isometrically equivalent) dual graph with a vertex for each $2$-cell in $A$ and an edge for $2$-cells in some $A_\ba$ which share a $1$-cell, one gets a ``round tree graph'' as below, which is easier to work with for our calculations.

For $\Delta \in \N$, let $[\Delta] = \{0,1,2,\ldots,\Delta-1\}$.
Recall $[\Delta]^*$ is the set of all finite words in the alphabet $[\Delta]$.
For $t \in [\Delta]^*$, we write $|t|$ for the length of a word $t$, and denote the concatenation of two words $t_0,t_1 \in [\Delta]^*$ by $t_0t_1$.
\begin{definition}
	\label{def:round-tree-graph}
	A \textbf{round tree graph} with vertical branching $V \geq 2$ and horizontal branching $\in [H_{\\min},H_{\max}]$ for $2 \leq H_{\min} \leq H=H_{\max}$ is a graph $\Gamma=(V(\Gamma),E(\Gamma))$ where the vertex set $V(\Gamma)$ is a subset of $[H]^* \times [V]^*$, and the following properties are satisfied. 
	\begin{enumerate}
		\item For all $t \in V$, $|\pi_H(t)|=|\pi_V(t)|$, where $\pi_H:V(\Gamma) \to [H]^*$ and $\pi_V:V(\Gamma) \to [V]^*$ denote the coordinate projections.
		\item For all $h \in [H]^*, v \in [V]^*$ and $\alpha \in [H], \beta \in [V]$, if $(h\alpha,v\beta) \in V$ then $(h,v) \in V$, and $(h,v)(h\alpha,v\beta)$ are connected by a \emph{vertical} edge in $E$.
		\item If $(h,v) \in V$ then for all $\beta \in [V]$ there exists $\Delta_{h,v,\beta} \in [H_{\\min},H_{\max}]$ so that
			\[
				\{\alpha \in [H] : (h\alpha, v\beta) \in V\} = [\Delta_{h,v,\beta}].
			\]
		\item If $(h,v),(h',v) \in V$ and $h,h'$ are consecutive in the lexicographic order on
			\[ \{ w \in [H]^* : (w,v) \in V \}, \]
			then $(h,v)(h',v)$ are connected by a \emph{horizontal} edge in $E$.
		\item Every edge in $E$ is either vertical or horizontal as above.
	\end{enumerate}
	If $H_{\min}=H_{\max}=H$ then $\Gamma$ is uniquely determined, and we call $\Gamma$ the \textbf{$(H,V)$-regular round tree graph}, and denote it by $\Gamma=RT^{H,V}$.
\end{definition}
For an example of a portion of such a graph, with a faint corresponding combinatorial round tree, see Figure~\ref{fig:rtgraph}.
\begin{figure}
   \def\svgwidth{.8\textwidth}
   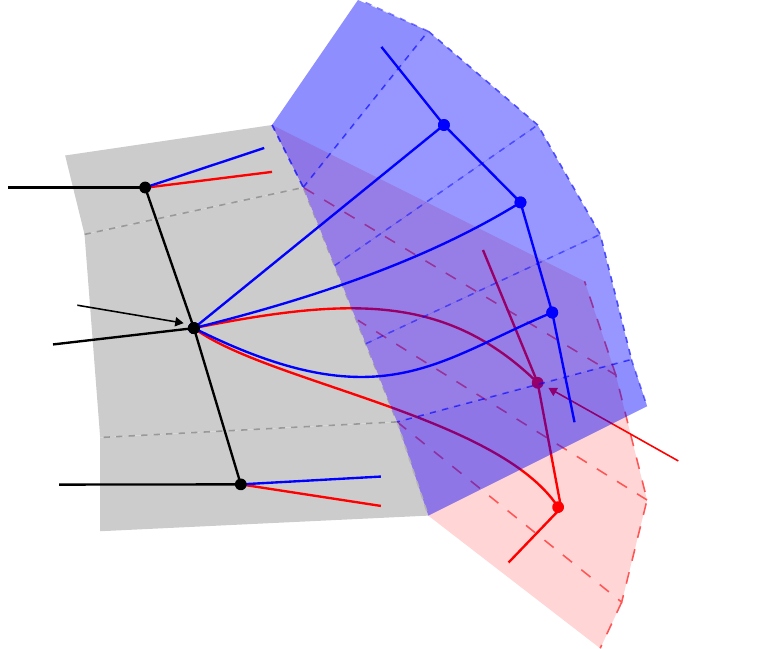
	\caption{Part of a round tree graph}	
	\label{fig:rtgraph}
\end{figure}

As before, such round tree graphs are Gromov hyperbolic, indeed are quasi-isometric to a $\CAT(-1)$ polygonal $2$-complex.  Finally, we recall known conformal dimension bounds for these round trees.
Given $H \geq 2, V \geq 1$, let $Z^{H,V}$ be the Cantor set $[V]^\N$ with the metric 
\[
	\rho_{H,V}((a_1,a_2,\ldots),(b_1,b_2,\ldots)) = \exp\left(-\log(H)\min\{i : a_i \neq b_i\}\right).
\]
This metric is Ahlfors regular of dimension $\log V / \log H$.
Indeed one can define an Ahlfors regular measure $\mu$ on $Z^{H,V}$ so that the ball of radius $H^{-k}$ around any point $(a_1,\ldots,a_k,\ldots)$ has measure $V^{-k}$.
\begin{theorem}
	\label{thm:round-tree-confdim}
	For any round tree graph $\Gamma$ with vertical branching $V$ and horizontal branching in $[2,H]$, one has
	\[
		\Confdim(\bdry \Gamma) \geq 1+\frac{\log V}{\log H}.
	\]
	Moreover, $\bdry RT^{H,V}$ carries a visual metric bi-Lipschitz to $Z^{H,V} \times[0,1]$, and so $\Confdim(\bdry RT^{H,V})= 1+\frac{\log V}{\log H}$ is attained.
\end{theorem}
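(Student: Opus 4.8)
The plan is to split the statement into (i) the general lower bound $\Confdim(\bdry\Gamma)\ge 1+\tfrac{\log V}{\log H}$, valid for every round tree graph with horizontal branching in $[2,H]$, and (ii) the sharp computation for $RT^{H,V}$, where the upper bound and the attainment come from identifying $\bdry RT^{H,V}$ with the product $Z^{H,V}\times[0,1]$ and quoting the known behaviour of conformal dimension under products. The equality and attainment in the ``Moreover'' clause are then immediate, so the real content is (i) together with the bi-Lipschitz identification in (ii).

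For (i) I would run the standard combinatorial-modulus lower bound for conformal dimension (Pansu; Bourdon--Kleiner; Carrasco Piaggio), in the form already used for round trees in \cite{Mac-12-random-cdim, Mac-16-confdim-subcplxs}. Fix a visual metric on $\bdry\Gamma$. Each sub-half-plane $A_\ba$, indexed by a vertical branch $\ba\in[V]^\N$, contributes a boundary arc $\gamma_\ba$, and the family $\cF=\{\gamma_\ba\}$ of such ``horizontal'' arcs sweeps out $\bdry\Gamma$. At level $n$ the combinatorial structure of $\bdry\Gamma$ consists of cells indexed by a vertical $n$-word together with a horizontal sub-interval; since the horizontal branching is at most $H$, each $\gamma_\ba$ meets at most $\asymp H^n$ such cells, and cells lying on distinct branches are disjoint. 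Computing the scale-$n$ combinatorial $Q$-modulus of $\cF$ from this data (on each branch the uniform density equal to $1/(\#\text{cells on it})$ is admissible, and the contributions of distinct branches add, the cells being disjoint) gives $\mathrm{Mod}_Q(\cF,\text{level }n)\gtrsim \big(VH^{1-Q}\big)^n$, which tends to $\infty$ precisely when $Q<1+\tfrac{\log V}{\log H}$. On the other hand, the critical exponent at which this combinatorial modulus stops blowing up is a quasisymmetry invariant, and on any Ahlfors $Q$-regular model it is at most $Q$ (combinatorial and continuous $Q$-modulus agree up to constants there, and the continuous $Q$-modulus of $\cF$, a family joining two disjoint compacta in a bounded $Q$-regular space, is finite). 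Hence if $\bdry\Gamma$ were quasisymmetric to a $Q$-regular space we would need $Q\ge 1+\tfrac{\log V}{\log H}$, which is the bound.

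For the identification in (ii) I would fix the visual parameter $\asymp\log H$ and coordinatise a boundary point $\xi\in\bdry RT^{H,V}$ by the pair $(\ba,s)\in[V]^\N\times[0,1]$, where $\ba$ records the nested sequence of sub-half-planes containing $\xi$ and $s$ is the renormalised horizontal position read off from the $[H]^*$-coordinate along the corresponding arc. A direct estimate of Gromov products in $RT^{H,V}$ shows that the visual metric is bi-Lipschitz to the sup (or $\ell^2$) metric on $Z^{H,V}\times[0,1]$, where $Z^{H,V}=([V]^\N,\rho_{H,V})$. Since $Z^{H,V}$ is compact and Ahlfors regular of dimension $\tfrac{\log V}{\log H}$, the product formula $\Confdim(C\times[0,1])=\dim_{\mathrm{AR}}(C)+1$ recalled in the introduction (see \cite{Mac-Tys-cdimexpo}) yields $\Confdim(\bdry RT^{H,V})=1+\tfrac{\log V}{\log H}$; and because $Z^{H,V}\times[0,1]$ is itself Ahlfors regular of exactly this dimension, the infimum defining the conformal dimension is realised, i.e.\ it is attained.

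The main obstacle is the quasisymmetry stability used in (i): one must know that the blow-up of the combinatorial $Q$-modulus of $\cF$ survives passing to an arbitrary quasisymmetric, Ahlfors $Q$-regular model, uniformly across all scales, and this has to hold for possibly irregular round tree graphs, where the cell shapes vary and $\cF$ is not literally self-similar. Setting up the combinatorial-to-visual dictionary for $\bdry\Gamma$ precisely enough that ``level-$n$ cell'' and ``$\gamma_\ba$ meets $\lesssim H^n$ of them'' hold with uniform constants, and checking the connectedness and local structure needed for the arcs $\gamma_\ba$ to behave as genuine curves, is where the real work lies; by comparison the product identification in (ii) is a routine Gromov-product computation.
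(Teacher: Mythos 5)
Your proposal is correct and follows essentially the same route as the paper: the lower bound is the round-tree curve-family/modulus argument of \cite{Mac-12-random-cdim,Mac-16-confdim-subcplxs} (which the paper simply cites), and the sharp value for $RT^{H,V}$ comes from a bi-Lipschitz identification of $\bdry RT^{H,V}$ with $Z^{H,V}\times[0,1]$ via Gromov-product estimates plus the product formula from \cite{Mac-Tys-cdimexpo}. The only cosmetic difference is that the paper carries out the Gromov-product computation in an explicit $\CAT(-1)$ model built by gluing hyperbolic half-planes (quasi-isometric to $RT^{H,V}$) rather than directly in the graph, which makes the visual metric and the projection to the horizontal coordinate concrete.
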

\begin{proof}
	We have that $\Gamma$ is the dual graph to a combinatorial round tree with vertical branching $V$ and horizontal branching in $[2,H]$ in the sense of Definition~\ref{def:comb-round-tree-complex}.  The lower bound then follows from \cite[\S 6]{Mac-12-random-cdim}, see \cite[Theorem 7.2]{Mac-16-confdim-subcplxs}.

	For the upper bound: $RT^{H,V}$ is the dual graph to a $\CAT(-1)$ complex defined as follows.  Let $o \in \HH^2$ be the origin, and choose a half-plane $U$ with $o \in \partial U$.  Divide $U$ into concentric annuli $U_i = B(o,(i+1)\log(H))\setminus B(o,i\log(H))$, $i \geq 0$, of width $\log(H)$.
	Divide each $U_i$ into $H^i$ $2$-cells using equally spaces radii centred at $o$.  Then $RT^{H,1}$ is exactly the dual graph of $U$ with this $2$-complex structure.  Moreover $RT^{H,V}$ is the dual graph to the $\CAT(-1)$ complex $W$ resulting from gluing copies $\{U_v\}_{v \in [V]^\N}$ of $U$ together along the convex subsets $B(o,i\log(H))$ in the appropriate way.  Since the circumference of $\partial B(o,(i+1)\log(H)) \asymp H^i$ one sees that each $U$ is quasi-isometric to $RT^{H,1}$, and $W$ is quasi-isometric to $RT^{H,V}$.

	Being $\CAT(-1)$, $W$ carries a visual metric $\rho(a,b) \asymp e^{-(a|b)_o}$; on each $\bdry U_i$ this restricts to a metric (uniformly) bi-Lipschitz to $[0,1]$.
	Define a Lipschitz projection $\pi:W \to U$ by isometrically sending each $U_i \to U$.  Fix a visual metric $\asymp e^{-(a|b)_o}$ on $U$ and a bi-Lipschitz parametrization $h:\bdry U \to [0,1]$.
	For $a \in \bdry W$, $a$ is the limit point of a geodesic ray $\gamma_a$ lying entirely in a unique $U_{v(a)} \subset W$, $v(a)\in[V]^\N$.
	Define $f: \bdry W \to Z^{H,V} \times [0,1]$ by $f(a)=(v(a), \bdry (\pi \circ \gamma_a))$; metrize $Z^{H,V}\times[0,1]$ by $d'((v,t),(v',t')) = \max\{\rho_{H,V}(v,v'),|t-t'|\}$.
	
	We claim $f$ is bi-Lipschitz.
	Suppose $a, b \in \bdry W$.  
	Let $v(a)=(a_1,\ldots), v(b)=(b_1,\ldots)$.
	Let $k = \min \{ i : a_i \neq b_i\}$, so $\rho_{H,V}(v(a),v(b)) = H^{-k}$.
	Let $r = (\pi\circ \gamma_a | \pi\circ \gamma_b)_o$, so $|h(\bdry(\pi\circ\gamma_a))-h(\bdry(\pi\circ\gamma_b))| \asymp e^{-r}$.
	If $r \leq k \log H$ then the point at which the geodesic rays $\pi\circ \gamma_a, \pi\circ\gamma_b$ coarsely diverge lifts to a point in $U_{v(a)}\cap U_{v(b)}$, thus
	\[
		\rho(a,b) \asymp e^{-r} \asymp \max\{e^{-r},H^{-k}\} = d'(f(a),f(b)).
	\]
	If $r > k \log H$ then the geodesics in $W$ to $a,b$ coarse diverge at the point $k \log H$ along either $\gamma_a$ or $\gamma_b$, and so
	\[
		\rho(a,b) \asymp e^{-k\log H} \asymp \max\{e^{-r},H^{-k}\} = d'(f(a),f(b)).
	\]
	So in either case, we see that $f$ is bi-Lipschitz.

	As $Z^{H,V}\times [0,1]$ is Ahlfors regular with dimension $1+\frac{\log V}{\log H}$, the attainment of this bound for the conformal dimension of $\bdry RT^{H,V}$ is standard~\cite[Example 4.1.9]{Mac-Tys-cdimexpo}.
\end{proof}

\section{Poincar\'e constants of round trees}
\label{sec:reg-round-trees}

The goal of this section is to calculate the critical exponent of Poincar\'e profiles for round tree graphs, and to bound their separation profiles.

\begin{theorem}\label{thm:pcritRT}
	If $\Gamma$ is a round tree graph with horizontal branching in $[2,H]$ and vertical branching $V \geq 2$, then:
	\[
		\pcrit(\Gamma) \geq 1 + \frac{\log V}{\log H}.
	\]
	In particular, if $\Gamma = RT^{H,V}$ is the $(H,V)$-regular round tree graph,
	\[
    	\pcrit(RT^{H,V})=1+ \frac{\log V}{\log H} =\Confdim(\bdry RT^{H,V}).
    \]
\end{theorem}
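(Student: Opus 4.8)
The plan is to establish the lower bound $\pcrit(\Gamma) \ge Q$, where $Q := 1 + \tfrac{\log V}{\log H}$, for \emph{every} round tree graph $\Gamma$ with horizontal branching in $[2,H]$ and vertical branching $V \ge 2$; the equality statement for the regular round tree graph $RT^{H,V}$ is then immediate. Indeed, $RT^{H,V}$ is a geodesic Gromov hyperbolic graph which, by Bonk--Schramm, is quasi-isometric to $\Cone(\bdry RT^{H,V})$, and by Theorem~\ref{thm:round-tree-confdim} the boundary $\bdry RT^{H,V}$ carries a visual metric bi-Lipschitz to the Ahlfors regular space $Z^{H,V}\times[0,1]$ of dimension $Q$. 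Hence Theorem~\ref{thm:crit-exponent-previous-upper-bound} gives $\pcrit(RT^{H,V}) \le \Confdim(Z^{H,V}\times[0,1]) = Q$, and combined with the lower bound this yields $\pcrit(RT^{H,V}) = Q = \Confdim(\bdry RT^{H,V})$.

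For the lower bound, fix $p \in [1,Q)$. Since $1-\tfrac1Q > 1-\tfrac1p$, and since polylogarithmic factors are harmless with respect to $\simeq$, it suffices to exhibit, for arbitrarily large $r$, a finite subgraph $\Gamma_r \le \Gamma$ with $|\Gamma_r| \le r$ and $|\Gamma_r|\, h^p(\Gamma_r) \gtrsim r^{1-\frac1Q}(\log r)^{-C}$: then $\Lambda^p_\Gamma(r) \not\lesssim r^{1-1/p}$ and so $\pcrit(\Gamma) \ge Q$. The structural input is the layered description of $\Gamma$: at combinatorial level $n$ the vertical tree $[V]^{\le n}$ supports $V^n$ pairwise disjoint horizontal paths $\{P_v : v \in [V]^n\}$, each of length between $2^n$ and $H^n$, and each $P_v$ is joined to the paths $P_{v\beta}$, $\beta \in [V]$, one level below by a bundle of vertical edges in which each vertex of $P_v$ is matched to $\asymp(\text{branching})$ consecutive vertices of $P_{v\beta}$. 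Dually, $\bdry\Gamma$ carries the Fubini family $\mathcal{F}$ of fibre curves $\{z\}\times[0,1]$, $z\in[V]^\N$, and the combinatorial $p$-modulus of $\mathcal{F}$ at scale $\delta = H^{-n}$ is $\gtrsim V^n\,\delta^{\,p-1} \ge \delta^{\,p-Q}$, which blows up as $\delta \to 0$ exactly because $p < Q$.

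To convert this blow-up into a Poincar\'e inequality I would, on each chosen subgraph, decompose the oscillation of a mean-zero test function $f$ into a horizontal and a vertical contribution. The horizontal contribution is bounded using the one-dimensional $L^p$-Poincar\'e inequality on the top-level paths, $\sum_{v}\|\,f|_{P_v} - \mathrm{avg}_{P_v}f\,\|_p^p \lesssim H^{np}\sum_v \|\nabla f\|_{p,P_v}^p \le H^{np}\|\nabla f\|_p^p$. The vertical contribution --- the behaviour of the column-average function $\bar f(v) = \mathrm{avg}_{P_v}f$ regarded as a function on $[V]^{\le n}$ --- is bounded by a weighted Hardy/Poincar\'e inequality on the vertical tree with weight $\asymp H^k$ at level $k$, the weight and the telescoping bound $|\bar f(v\beta)-\bar f(v)|^p \lesssim H^{-(|v|+1)}\sum_{e}|\nabla f(e)|^p$ (sum over the vertical bundle from $P_v$ to $P_{v\beta}$) both coming from the scaling bundles; the mean-zero hypothesis on $f$ supplies the boundary term at the root of the tree. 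Putting the two estimates together bounds $\|f\|_p$ in terms of $H^n\|\nabla f\|_p$, giving the sought lower bound on the Poincar\'e constant of the subgraph.

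The point where genuine work is needed is the choice of subgraph. The naive candidates fail: a ball $\Gamma_n$ of radius $n$ in $\Gamma$ admits the cheap cut separating off the subtree below one level-one vertex using at most $H$ edges, whence $|\Gamma_n|\,h^p(\Gamma_n) \lesssim |\Gamma_n|^{1-1/p}$ and $\Gamma_n$ contributes nothing beyond the trivial bound $\Lambda^p_\Gamma(r) \lesssim r^{1-1/p}$; and a level-$n$ sphere is disconnected. One must instead work with subgraphs in which the horizontal pieces joined through the vertical tree have only deep common ancestors --- concretely, by partitioning $[V]^n$ into blocks of columns sharing a long common trunk, running the Poincar\'e estimate on each block, and recombining --- so that the effective cut geometry reflects the two-dimensional product structure of $\bdry\Gamma$ rather than the tree structure. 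A secondary, more routine, issue is the variable horizontal branching of a general round tree graph: one uses $H_{\min} \ge 2$ to keep the paths $P_v$ connected and to lower-bound their lengths, and $H_{\max} \le H$ to obtain the exponent $Q$ rather than a larger one.
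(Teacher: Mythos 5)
Your treatment of the upper bound for $RT^{H,V}$ (Bonk--Schramm together with Theorems~\ref{thm:crit-exponent-previous-upper-bound} and~\ref{thm:round-tree-confdim}) matches the paper and is fine, and your diagnosis that balls fail because a whole vertical subtree can be severed near the root by $\leq H$ edges is correct. The genuine gap is in the lower bound: your intermediate target, subgraphs $Y$ with $|Y|\,h^p(Y)\gtrsim |Y|^{1-\frac1Q}(\log|Y|)^{-C}$, is strictly stronger than what the paper proves (Proposition~\ref{prop:roundtree-lower} gives exponent $1-\frac1p+\epsilon$ with $\epsilon=\frac{Q-p}{p(Q-p+pQ)}$, which only approaches $1-\frac1Q$ as $p\to Q$), and for $p=1$ it is actually \emph{false} for the subgraphs you propose. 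A block consisting of a trunk path of length $T$ together with its descendants $k$ levels down admits two cheap cuts: removing the $\preceq TH$ vertical edges from the trunk to one child column detaches a $\asymp 1/V$ fraction, and removing one horizontal edge per column ($\preceq V^{k+1}$ edges) splits it radially in half. Since $|Y|\asymp T(HV)^k$, the corresponding test functions give $|Y|\,h^1(Y)\preceq\min\{THV,\,V^{k+1}\}$, and a short computation shows this can never reach $|Y|^{1-\frac1Q}$ (optimising $T\asymp V^k$ gives at best $\asymp|Y|^{\frac{Q-1}{2Q-1}}$, and a polylog cannot close a power gap). Consistently with this, your vertical step cannot produce the constant $H^{np}$: in the weighted Hardy inequality on the vertical tree with weight $\asymp H^{j}$ at level $j$ and the telescoped bound $|\bar f(v\beta)-\bar f(v)|^p\preceq |P_{v\beta}|^{-1}\sum_e|\nabla f(e)|^p$, summing over the tree yields a geometric series with ratio $VH^{1-p}$, which exceeds $1$ precisely when $p<Q$; the deep levels dominate, the constant degrades to $\asymp(HV)^n$, and you recover only the trivial bound $|Y|^{1-\frac1p}$ unless the trunk is widened -- at which point you are forced into a width-versus-depth trade-off.

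That trade-off is exactly the missing idea, and it is where the paper's proof lives: fix $p<Q$, take a trunk of width $T$ with $T^p\asymp H^{k(Q-p)}$ and depth $k$, and accept the weaker exponent $1-\frac1p+\epsilon$, which still beats $r^{1-\frac1p}$ and hence gives $\pcrit(\Gamma)\geq Q$. The paper implements this via the path-congestion estimate of Lemma~\ref{lem:pathcounting} rather than your horizontal-plus-vertical decomposition; your decomposition could plausibly be made to work at that corrected target, but not at $r^{1-\frac1Q}$. Two further cautions: the step from the boundary modulus blow-up to a profile bound is essentially the implication that is open in general (Question~\ref{quequality}) -- here the boundary $Z^{H,V}\times[0,1]$ is disconnected and admits no Poincar\'e inequality, so no off-the-shelf transfer applies and the conversion must be done by hand; and for irregular horizontal branching the bound ``$|P_v|\geq 2^n$'' is not enough bookkeeping -- the paper keeps horizontal lengths $\asymp TH^i$ by letting the vertical coordinate descend several levels (appending $0^j$) at each stage, and some such device is needed to obtain the exponent $Q=1+\log V/\log H$ with $H=H_{\max}$.
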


This follows from the following proposition, the main result of this section.
\begin{proposition}
	\label{prop:roundtree-lower}
	Let $\Gamma$ be a round tree graph with horizontal branching in $[2,H]$ and vertical branching $V \geq 2$.  Let $Q= 1+\frac{\log V}{\log H}$.
    Then for all $1\leq p < Q$, there exists $c>0$ so that, setting \[ \epsilon = \frac{Q-p}{p(Q-p+pQ)} > 0, \]
	for any $r >1$ there exists a subgraph $Y \subset \Gamma$ of size $|Y| \asymp r$ and with
	\[
		\Lambda_\Gamma^p(|Y|) \geq |Y| h^p(Y) \geq c |Y|^{1-\frac{1}{p}+\epsilon}.
	\]
	Thus $\pcrit(\Gamma)\geq Q$.
\end{proposition}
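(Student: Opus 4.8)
The plan is to reduce the whole proposition to constructing, for each $r>1$, a single well-chosen subgraph $Y\subset\Gamma$ of size $|Y|\asymp r$ and proving a lower bound on its $L^p$-Poincar\'e constant $h^p(Y)$. Once we know $h^p(Y)\ge c|Y|^{-1/p+\epsilon}$ the rest is formal: $\Lambda_\Gamma^p(|Y|)\ge|Y|\,h^p(Y)\ge c|Y|^{1-1/p+\epsilon}$, and $\epsilon>0$ when $1\le p<Q$, so since the sizes $|Y|$ we produce are spaced by a bounded multiplicative factor — and intermediate sizes are reached by truncating $Y$ at its deepest level — we get $\Lambda_\Gamma^p(r)\not\lesssim r^{1-1/p}$ for every $p<Q$, hence $\pcrit(\Gamma)\ge Q$ by Corollary~\ref{cor:crit-exponent-interval}. (For $\Gamma=RT^{H,V}$ the reverse inequality $\pcrit\le\Confdim(\bdry RT^{H,V})=Q$ is Theorem~\ref{thm:crit-exponent-previous-upper-bound} together with Theorem~\ref{thm:round-tree-confdim}.)

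For the subgraph, write $V=H^{Q-1}$ and, in the coordinates of Definition~\ref{def:round-tree-graph}, fix a depth $n$ and a parameter $m$. Let $Y$ be the sub-round-tree lying over the cylinder of all words extending some fixed $v_0$ with $|v_0|=m$, truncated at depth $n$: the subgraph on vertices $(h,v)$ with $v$ extending $v_0$ and $m\le|v|\le n$, with the induced vertical and horizontal edges. Geometrically $Y$ is the hyperbolic cone over $B\times[0,1]$ with $B\subset Z^{H,V}$ a ball of radius $\asymp H^{-m}$; its ``top'' is a horizontal path of length between $2^m$ and $H^m$, below which $(HV)$-branching regions hang, and $|Y|\asymp H^{Qn-(Q-1)m}$ in the regular case. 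The role of $m$ is a competition. On one hand, cutting the top path — a set of $\lesssim H^m$ vertices — disconnects $Y$ into pieces each of size $\asymp|Y|$, which forces $h^p(Y)\lesssim H^{-Q(n-m)/p}$; so only when $m\ge n(Q-p)/Q$ can $Y$ hope to satisfy the inequality $h^p(Y)\gtrsim H^{-n}$ (the value one would get if $B\times[0,1]$ were a space with a Poincar\'e inequality). On the other hand $|Y|$ is largest when $m$ is smallest. I would therefore take $m=\lfloor n(Q-p)/Q\rfloor$, the threshold, and then a direct computation gives $|Y|\,h^p(Y)\gtrsim|Y|\cdot H^{-n}=|Y|^{1-1/p+\epsilon}$ with exactly $\epsilon=\frac{Q-p}{p(Q-p+pQ)}$ — this is where the ungainly exponent comes from.

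The hard part, and the main obstacle, is proving the Poincar\'e inequality $\|f-\bar f\|_{L^p(Y)}\lesssim H^n\|\nabla f\|_{L^p(Y)}$ for this threshold $Y$. One cannot run a Bonk--Kleiner argument as in Theorem~\ref{thm:hyp-attain-confdim-critexp}, because the boundary $B\times[0,1]$ is totally disconnected in the $Z$-direction and carries no Poincar\'e inequality in the sense of Heinonen--Koskela; only the connectivity in the $[0,1]$-direction is available, and the difficulty is to exploit it while ruling out cheaper, tree-like cuts. The natural route is a two-level estimate: decompose $Y$ into ``vertical slabs'', one over each point of $B$ at scale $H^{-n}$, each slab being a half-plane region of $\HH^2$-type (nested horizontal paths of lengths $\asymp H^m,\dots,H^n$) for which an elementary $\HH^2$-Poincar\'e inequality bounds $\|f-f_S\|_{L^p(S)}$ by $\asymp H^{n/p}\|\nabla f\|_{L^p(S)}$; then the slab-averages $S\mapsto f_S$ form a function on the $V$-ary tree of slabs, whose own Poincar\'e inequality, fed back in, absorbs their oscillation. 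Controlling the overlap multiplicities of the slabs is routine; the delicate case is when $\nabla f$ is concentrated near the short top path, where one must argue directly using that it has only $\lesssim H^m$ edges — this ``no cheap cut'' input is exactly what pins down the threshold value of $m$. An alternative packaging of the same estimate runs through the combinatorial $p$-modulus of the curve family in $B\times[0,1]$ that runs in the $[0,1]$-direction, bounded below by $\asymp H^{(p-Q)n}$ via a Fubini argument over the $Z$-columns and H\"older's inequality, then converted to a lower bound for $h^p(Y)$; here too the conversion is favorable only at the threshold aspect ratio. Finally, for a general (non-regular) round tree graph the same region works verbatim, the bound $H=H_{\max}$ on horizontal branching entering only to keep horizontal path lengths, and hence all implied constants (depending only on $H$, $V$, $p$), under control; this yields $\pcrit(\Gamma)\ge 1+\frac{\log V}{\log H}$ for every such $\Gamma$.
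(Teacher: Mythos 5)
Your setup is the right one, and it matches the paper's more closely than you might expect: the subgraph you describe (a sub-round-tree over a depth-$m$ cylinder, truncated so the bottom horizontal scale is $H^{n}$) is essentially the paper's $Y_k$ with $T\asymp H^m$ and $k=n-m$, and your threshold $m=\lfloor n(Q-p)/Q\rfloor$ is exactly the paper's optimisation $T^p\asymp H^{k(Q-p)}$; your target $h^p(Y)\gtrsim H^{-n}$ is numerically equivalent to the claimed bound $|Y|h^p(Y)\gtrsim |Y|^{1-\frac1p+\epsilon}$ with the stated $\epsilon$. But the proposal stops precisely at the step that constitutes the content of the proposition. You explicitly flag the Poincar\'e lower bound for the threshold subgraph as ``the hard part'' and then offer two sketches, neither of which is carried out: the slab decomposition leaves unresolved the case where $\nabla f$ concentrates near the top path (the slab averages live on a $V$-ary tree, which by itself has no useful Poincar\'e inequality, so ``argue directly using that the top has $\lesssim H^m$ edges'' is exactly the estimate that needs proving, not a routine remark); and the modulus alternative requires converting a boundary modulus bound into a lower bound on $h^p$ of a finite graph, which is the Bonk--Kleiner-type step you yourself note is unavailable for $B\times[0,1]$. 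The paper fills this gap by an elementary congestion argument (Lemma~\ref{lem:pathcounting}): it routes a path between every pair of vertices through the top path, uses a colouring $\Pi$ to equidistribute which top vertex each pair uses, bounds the congestion $M_e$ separately on horizontal edges ($\preceq TH^k\cdot TH^{Qk}$) and on vertical edges (a geometric decay $V^{-\lfloor\cdot\rfloor}$ away from the top, giving a convergent series), and then optimises $T$ against $k$. Some version of this quantitative input is what your write-up is missing.

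A secondary issue: for irregular round trees your claim that ``the same region works verbatim'' is not quite right. Truncating at a fixed vertical depth $n$ gives horizontal paths whose lengths at the same depth can differ by factors up to $(H/2)^{n-m}$, which wrecks the uniform aspect ratio your slab/congestion estimates need. The paper avoids this by equalising horizontal lengths stage by stage: on each branch it descends a variable number $j\leq\lceil\log_2 H-1\rceil$ of extra vertical levels (appending $0^j$) until the horizontal path has length $\geq TH^i$, so that every stage has horizontal scale $\asymp TH^i$ and $|Y_k|\asymp T(HV)^k$ regardless of the actual branching pattern. Without some such normalisation the general case does not follow ``verbatim'' from the regular one.
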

Note that this proposition implies that for all $1\leq p < Q$,
\[
    \Lambda_\Gamma^p(r) \succeq r^{1-\frac{1}{p+ \frac{Q-p}{Q}}}= r^{\frac{pQ-p}{pQ-p+Q}}
\]
which, in the case $p=1$, simplifies to $\Lambda_\Gamma^1(r) \succeq r^\frac{Q-1}{2Q-1}$. 

In the proof of Proposition \ref{prop:roundtree-lower} we require the following standard lemma.
\begin{lemma}[{e.g.\ \cite[Proof of Theorem 10.1]{HumeMackTess-Pprof}}]
	\label{lem:pathcounting}
		Suppose for any vertices $w_0,w_1$ in a finite graph $\Gamma$ we have a choice of path $\gamma_{w_0,w_1}$ from $w_0$ to $w_1$. For each edge $e$, let $M_e$ denote the number of pairs $(w_0,w_1) \in V(\Gamma)^2$ with $e \subset \gamma_{w_0,w_1}$.

		Then for $p>1$ we have:
\[
 h^p(\Gamma) \geq  \frac{|\Gamma|^{\frac1p}}{\max_{w_0,w_1}\left(\sum_{e\in\gamma_{w_0,w_1}} M_e^{\frac{1}{p-1}}\right)^{\frac{p-1}{p}}},
\]
		and for $p=1$ we have:
		\[
			h^1(\Gamma) \geq \frac{|\Gamma|}{\max_{e \in E(\Gamma)} M_e}.
		\]
	\end{lemma}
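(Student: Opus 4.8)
The plan is to run the standard path-counting/duality argument. Fix an admissible test function $f:V(\Gamma)\to\R$, so that $\sum_{v}f(v)=0$ and $f\not\equiv 0$; it suffices to bound $\|\nabla f\|_p/\|f\|_p$ from below by the claimed quantity and then take the infimum. The starting point is the telescoping estimate: writing the chosen path $\gamma_{w_0,w_1}$ as a sequence of edges and summing the increments of $f$ along it, for every ordered pair $(w_0,w_1)\in V(\Gamma)^2$ we have
\[
|f(w_1)-f(w_0)| \leq \sum_{e\in\gamma_{w_0,w_1}} |\nabla f|(e).
\]

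For $p>1$, the key step is to insert the multiplicities $M_e$ via H\"older's inequality with exponents $p$ and $p'=p/(p-1)$, applied to the factorisation $|\nabla f|(e) = \bigl(M_e^{-1/p}|\nabla f|(e)\bigr)\cdot M_e^{1/p}$. Since $p'/p = 1/(p-1)$, this yields for every pair
\[
|f(w_1)-f(w_0)|^p \leq \Bigl(\sum_{e\in\gamma_{w_0,w_1}} \frac{|\nabla f|(e)^p}{M_e}\Bigr)\Bigl(\sum_{e\in\gamma_{w_0,w_1}} M_e^{\frac{1}{p-1}}\Bigr)^{p-1}.
\]
Bounding the second factor by its maximum over all pairs, which I call $K$, and then summing over all $(w_0,w_1)$, the contribution of each edge $e$ is counted exactly $M_e$ times by definition of $M_e$, which cancels the weight $1/M_e$; hence
\[
\sum_{w_0,w_1}|f(w_1)-f(w_0)|^p \leq K \sum_{e\in E(\Gamma)} |\nabla f|(e)^p = K\,\|\nabla f\|_p^p.
\]

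For the matching lower bound on the left-hand side I would use the mean-zero hypothesis together with convexity of $t\mapsto |t|^p$: for each fixed $w_1$, Jensen's inequality gives $\frac{1}{|\Gamma|}\sum_{w_0}|f(w_1)-f(w_0)|^p \geq \bigl|f(w_1) - \frac{1}{|\Gamma|}\sum_{w_0}f(w_0)\bigr|^p = |f(w_1)|^p$, and summing over $w_1$ produces $\sum_{w_0,w_1}|f(w_1)-f(w_0)|^p \geq |\Gamma|\,\|f\|_p^p$. Combining the two displays and rearranging gives $\|\nabla f\|_p/\|f\|_p \geq (|\Gamma|/K)^{1/p}$, which is precisely the asserted bound once $K$ is unpacked; taking the infimum over admissible $f$ finishes the case $p>1$. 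The case $p=1$ is identical in structure, except that in place of H\"older one uses the trivial estimate $\sum_e M_e|\nabla f|(e) \leq (\max_e M_e)\sum_e|\nabla f|(e)$ after summing the telescoping inequality over all pairs, while the convexity step (now for $t\mapsto|t|$) still yields $\sum_{w_0,w_1}|f(w_1)-f(w_0)| \geq |\Gamma|\,\|f\|_1$.

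There is no deep obstacle here, as the lemma is essentially bookkeeping; the one point requiring care is choosing the exponent $1/p$ in the H\"older split so that the weights $M_e$ cancel exactly upon summation over all ordered pairs — any other exponent leaves a residual weight and fails to reproduce $\|\nabla f\|_p^p$ on the right. One should also note that the argument only uses that $\gamma_{w_0,w_1}$ is some walk from $w_0$ to $w_1$ (not necessarily simple or geodesic), so the statement holds for arbitrary path systems as phrased.
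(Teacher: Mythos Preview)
Your proof is correct and follows essentially the same route as the paper's: both telescope along the chosen paths, apply H\"older with the weight split $M_e^{-1/p}\cdot M_e^{1/p}$ so that the multiplicities cancel upon summing over pairs, and use convexity (equivalently, the inequality $\|f-f_\Gamma\|_p^p \leq \frac{1}{|\Gamma|}\sum_{w_0,w_1}|f(w_0)-f(w_1)|^p$) together with the mean-zero condition for the lower bound on the double sum. The only cosmetic difference is that the paper phrases the lower-bound step via $\|f-f_\Gamma\|_p$ rather than invoking Jensen directly, but for mean-zero $f$ this is the same inequality.
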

	\begin{proof}
		As in \cite[Proposition 10.3 and Proof of Theorem 10.1]{HumeMackTess-Pprof}, for any $f:V(\Gamma)\to \R$ and $p > 1$, letting $f_\Gamma = \frac{1}{|\Gamma|}\sum_{x \in V(\Gamma)}f(x)$,
		\begin{align*}
			\|f-f_\Gamma\|_p^p 
 			& \leq \frac{1}{|\Gamma|} \sum_{w_0,w_1 \in V(\Gamma)} |f(w_0)-f(w_1)|^p
			\\ & \leq \frac{1}{|\Gamma|} \Bigg( \sum_{\substack{w_0,w_1 \in V(\Gamma)\\ e \subset \gamma_{w_0,w_1}}} |\nabla f(e)|^p M_e^{-1}\Bigg) \max_{w_0,w_1 \in V(\Gamma)} \Bigg(\sum_{e \subset \gamma_{w_0,w_1}} M_e^{\frac{1}{p-1}}\Bigg)^{p-1}
			\\ & \leq \frac{1}{|\Gamma|} \|\nabla f\|_p^p \max_{w_0,w_1 \in V(\Gamma)} \Bigg(\sum_{e \subset \gamma_{w_0,w_1}} M_e^{1/(p-1)}\Bigg)^{p-1}.
		\end{align*}
		The same proof simplifies to show
		\[
			\|f-f_\Gamma\|_1 \leq \frac{1}{|\Gamma|} \|\nabla f\|_1 \max_{e \in E(\Gamma)} M_e;
		\]
		compare e.g.\ \cite[Proposition 1]{GladShum}.
	\end{proof}

\begin{proof}[Proof of Proposition~\ref{prop:roundtree-lower}]
	Fix $p<Q$, and $r>0$.  

	Let $T \in \N, k \in \N$ be constants to be determined.
	
	In $\Gamma$ fix a horizonal path $Y_0$ of length $T$, say with initial coordinate $(0^t,0^t)$ for some large enough $t$.  In particular, $\pi_V|_{Y_0}$ has image a single point with preimage a horizontal path of length $T$.
	We build the subcomplex $Y$ by repeating the following process, for $i=1, \ldots, k$.
	\begin{enumerate}
		\item For each maximal word $v \in \pi_V(Y_{i-1})$, assume $\pi_V^{-1}(v) \subset Y_{i-1}$ is a horizontal path of length $\in [TH^{i-1},TH^i)$.
		\item For each $\beta \in [V]$, add to $Y_{i}$ all points $(hu,v\beta0^j) \in \Gamma$ so that $(h,v) \in \pi_V^{-1}(v)$, with $j$ minimal (depending on $v$) so that the horizontal path $\pi_V^{-1}(v\beta0^j)$ has length $\geq TH^i$.
	\end{enumerate}
	Observe at step 2 that each time the vertical coordinate adds another letter, the horizontal path length increases by a factor between $2$ and $H$.  So the $j$ in step 2 satisfies $0 \leq j \leq \lceil \log_2(H)-1 \rceil$.

	Let $\Gamma_V = \pi_V(Y_k)$ and observe that $\Gamma_V$ is a rooted tree with $V^k$ leaves, with all vertices having either $1$ or $V$ children, and with the property that any path of vertices each having only $1$ child has length at most $\lceil \log_2(H) -2 \rceil$.

	By construction, $|Y_k| \asymp T(HV)^k \asymp TH^{Qk}$.

	Define a colouring map $\Pi:Y_k \times Y_k \to [T]$ with the following property:
	for each fixed $x \in Y_k$, and each fixed $t \in [T]$, 
	\[
		\big| \Pi^{-1}(t) \cap \{x\} \times Y_k \big| \preceq (HV)^k \asymp H^{Qk}.
	\]
	Likewise, 
	for each fixed $y \in Y_k$, and each fixed $t \in [T]$, 
	\[
		\big| \Pi^{-1}(t) \cap Y_k \times \{y\} \big| \preceq (HV)^k \asymp H^{Qk}.
	\]
	Such a map $\Pi$ can be chosen by (roughly) labelling each vertex in $Y_k$ by labels in $[T^{1/2}]$ alternating along horizontal paths, then letting $\Pi(x,y)$ be the combination of labels in $[T^{1/2}]\times[T^{1/2}] \approx [T]$.

	Now to bound the Poincar\'e constants in $Y_k$, we choose paths as in Lemma~\ref{lem:pathcounting}:

	Let $(b_t)_{t \in [T]}$ label the horizontal path $\pi_V^{-1}(Y_0)$.
	For $(w_0,w_1) \in Y_k$, with $w_0=(h_0,v_0), w_1=(h_1,v_1)$, choose
	$w_0'=(h_0',v_0) \in Y_k$ with $b_{\Pi(w_0,w_1)}$ an initial subword of $h_0'$, 
	and choose $w_1'=(h_1',v_1) \in Y_k$ with $b_{\Pi(w_0,w_1)}$ an initial subword of $h_1'$.
	Now define $\gamma_{w_0,w_1}$ by travelling horizontally from $w_0$ to $w_0'$, 
	then travel vertically to $(b_{\Pi(w_0,w_1)},0^t) \in Y_0$, 
	then travel vertically out to $w_1'$, then travel horizontally to $w_1$.

\smallskip
{\noindent\textbf{Upper bounds on $M_e$:}}

    For a horizonal edge $e = ((h,v),(h',v))$, $e \in \gamma_{w_0,w_1}$ implies that either $\pi_V(w_0)=v$ (or $\pi_V(w_1)=v$), leaving $\preceq TH^k$ choices for $w_0$ (or $w_1$) and $\asymp TH^{Qk}$ choices for $w_1$ (or $w_0$). Hence,
    \begin{equation}\label{eq:horizedge}
        M_e \preceq TH^{k}TH^{Qk}. 
    \end{equation}

    For a vertical edge $e = ((h,v),(h',v'))$ with $|v|>|v'|$ (or $|v|<|v'|$) we know $b_{\Pi(w_0,w_1)}$ so there are $\preceq TH^{Qk} H^{Qk} \asymp |Y_k|^2/T$ possibilities for $(w_0,w_1)$. Additionally, for each branch vertex in $\Gamma_V$ between $v$ and $\pi_V|_{Y_0}$, the number of possibilities for $w_0$ (or $w_1$) goes down by a factor of $1/V$, so we have
    \begin{equation}\label{eq:vertedge}
        M_e \preceq \frac{|Y_k|^2}{T} \cdot V^{- \left\lfloor\frac{|v|-|\pi_V|_{Y_0}|}{\log_2H}\right\rfloor}.
    \end{equation}

    We now apply Lemma \ref{lem:pathcounting}. 
    
\smallskip\noindent\textbf{Case $p>1$:}

	We wish to bound $\sum_{e \in \gamma_{w_0,w_1}} M^{1/(p-1)}$ from above independently of $w_0,w_1 \in Y_k$.
	
	By \eqref{eq:horizedge}, the sum along the horizontal edges in $\gamma_{w_0,w_1}$ satisfies
	\begin{equation}
		\label{eq:paths1}
		\sum M_e^{\frac{1}{p-1}} \preceq (TH^k) \left( TH^k |Y_k| \right)^{\frac{1}{p-1}} = (TH^k)^{\frac{p}{p-1}} |Y_k|^{\frac{1}{p-1}}.
	\end{equation}

	Now, using \eqref{eq:vertedge}, the values $M_e^{\frac{1}{p-1}}$ along the edges of a vertical path can be bounded by a sum of two geometric series, each of which has maximum term $\asymp \left(|Y_k|^2 /T \right)^{1/(p-1)}$ and ratio $V^{-1/\log_2(H)}$. Therefore,
	\begin{equation}
		\label{eq:paths2}
		\sum M_e^{\frac{1}{p-1}} \preceq \left( |Y_k|^2 / T  \right)^{\frac{1}{p-1}}.
	\end{equation}
	
	Put together, by Lemma~\ref{lem:pathcounting} we have
	\begin{equation*}
		|Y_k| h^{p}(Y_k)  \succeq \frac{|Y_k|^{1+\frac{1}{p}}}{ \max\{ TH^k |Y_k|^{1/p}, (|Y_k|^2/T)^{1/p} \} }.
	\end{equation*}

	Now we fix variables. Choose $k\in\N$ and set $T$ to satisfy $T^p\asymp H^{k(Q-p)}$, so $|Y_k|\asymp T^{1+\alpha Q}$ with $\alpha=\frac{p}{Q-p}$.  Note that by varying $k$ we can ensure $|Y_k| \asymp r$ for any $r\geq 1$.  The above equation may be rewritten as follows:
    \begin{align*}
		|Y_k| h^{p}(Y_k)   & \succeq \frac{T^{(1+\alpha Q)(1+\frac1p)}}{\max\{T^{1+\alpha}T^{(1+\alpha Q)/p},T^{(1+2\alpha Q)/p}\}}  \\ & = \min\{T^{\alpha(Q-1)},T^{1+\alpha Q-\alpha Q/p}\}.
	\end{align*}

    It suffices to prove that there is some $\epsilon>0$ such that 
    \[
        \min\{T^{\alpha(Q-1)},T^{1+\alpha Q-\alpha Q/p}\} \succeq T^{(1+\alpha Q)(1-\frac1p+\epsilon)}
    \]
    or, in other words, that
    \[
        \min\{\alpha(Q-1),1+\alpha Q-\alpha Q/p\} \geq (1+\alpha Q)(1-\frac1p+\epsilon).
    \]
    Rearranging $\alpha(Q-1)= (1+\alpha Q)(1-\frac1p+\epsilon)$, we obtain
    \[
     \epsilon = \frac{1}{p}-\frac{1+\alpha}{1+\alpha Q}.
    \]
    Recall that $\alpha=\frac{p}{Q-p}$, so the above equation gives
    \[
    \epsilon = \frac{1}{p} - \frac{1+\frac{p}{Q-p}}{1+\frac{pQ}{Q-p}} = \frac{Q-p}{p(Q-p+pQ)}>0.
    \]
    Similarly, rearranging $1+\alpha Q-\alpha Q/p = (1+\alpha Q)(1-\frac1p+\epsilon)$, we have
    \[
    \epsilon = \frac{1}{p(1+\alpha Q)}.
    \]
    Again, setting $\alpha=\frac{p}{Q-p}$ in the above equation we get
    \[
    \epsilon = \frac{1}{p\left(1+\frac{pQ}{Q-p}
    \right)} = \frac{Q-p}{p(Q-p+pQ)}>0.
    \]
    Thus, Proposition \ref{prop:roundtree-lower} holds in the case $p>1$ with $\epsilon= \frac{Q-p}{p(Q-p+pQ)}$.

\smallskip\noindent\textbf{Case $p=1$:}

	As in the $p>1$ case, choose $k\in\N$ and set $T$ to satisfy $T \asymp H^{k(Q-1)}$, so $|Y_k|\asymp T^{1+\alpha Q}$ with $\alpha=\frac1{Q-1}$. Now, Lemma \ref{lem:pathcounting} gives
	\[
		|Y_k|h^1(Y_k) \succeq \frac{|Y_k|^2 }{\max\{TH^k |Y_k|, |Y_k|^2/T\}}
		\asymp \min\{T^{\alpha(Q-1)},T\} = T
	\]
	It suffices to find $\epsilon>0$ such that 
    \[
        T\succeq T^{(1+\alpha Q)\epsilon}\asymp |Y_k|^\epsilon \asymp |Y_k|^{1-\frac1p+\epsilon}
    \]
    or, in other words, $\epsilon(1+\alpha Q)\leq 1$. It is clear that this holds for 
    \[
		\epsilon= \frac1{1+\alpha Q} = \frac{Q-1}{2Q-1} = \frac12 - \frac{1}{2(2Q-1)}>0.\qedhere
    \]
\end{proof}

\section{Applications of regular round trees}
\label{sec:applications-reg-round-trees}
In this section we use the results above to compute or bound the critical exponent $\pcrit$ for certain hyperbolic cones on product spaces, Heintze manifolds, and random groups.

\subsection{Product spaces}
\begin{theorem}
	\label{thm:product-crit-exponent}
	Suppose $Z$ is a compact Ahlfors $Q$-regular space for some $Q>0$.
	Then for all $1 \leq p<1+Q$ there exists $\epsilon>0$ so that
	$\Lambda_{\Cone(Z \times [0,1])}^p(r) \gtrsim r^{1+\frac{1}{p}+\epsilon}$,
	in particular:
	\[
		\Confdim(Z \times [0,1]) = \pcrit(\Cone(Z \times [0,1])) = 1+Q.
	\]
\end{theorem}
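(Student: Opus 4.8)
The plan is to establish the two inequalities separately. The bound $\pcrit(\Cone(Z \times [0,1])) \leq \Confdim(Z\times[0,1])$ is immediate from Theorem~\ref{thm:crit-exponent-previous-upper-bound}, and $\Confdim(Z\times[0,1]) = Q+1$ is the standard fact recalled in the introduction for compact Ahlfors $Q$-regular $Z$. So the content is the reverse inequality together with the explicit profile bound, and for both I would reduce to the regular round tree case: given $1 \le p < 1+Q$, I will produce a regular (indeed quasi-isometric) embedding of $RT^{H,V}$ into $\Cone(Z\times[0,1])$ for suitable $H,V$ with $Q_{RT} := 1 + \frac{\log V}{\log H} \in (p, 1+Q)$, and then invoke Proposition~\ref{prop:roundtree-lower} and monotonicity of $\Lambda^p$ under regular maps \cite{HumeMackTess-Pprof}.

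First, a metric construction inside $Z$. Since $Z$ is infinite, compact, and Ahlfors $Q$-regular, every ball $B(z,\rho)$ with $\rho \leq \diam Z$ contains $\asymp H^Q$ points that are $\rho/H$-separated. Fix $H$ a large power of $2$ and set $V := \lfloor c_0 H^Q \rfloor$, where $c_0>0$ depends only on the Ahlfors constant, chosen so that $2 \leq V$ and $V$ does not exceed the above count once $H$ is large. Recursively build a rooted ``ball tree'': inside each chosen generation-$k$ ball (radius $\asymp H^{-k}$) pick exactly $V$ pairwise $\gtrsim H^{-(k+1)}$-separated sub-balls of radius $\asymp H^{-(k+1)}$, and let $K\subset Z$ be the resulting Cantor set. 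A direct estimate of the distance between two points of $K$ lying in distinct generation-$(k+1)$ balls but the same generation-$k$ ball (it is $\asymp H^{-k}$ up to a factor $\asymp H$) shows the natural coding map $[V]^\N \to K$ is bi-Lipschitz for $\rho_{H,V}$; that is, $K$ is bi-Lipschitz to $Z^{H,V}$, with constant depending on $H$, which is harmless. As $\log V \geq Q\log H - O(1)$, we get $Q_{RT} = 1 + \frac{\log V}{\log H} \geq 1+Q-\eta$ for any prescribed $\eta>0$ once $H$ is large; for the given $p$, fix $\eta := \tfrac12(1+Q-p)$, so that $Q_{RT}\in(p,1+Q)$.

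Now I transfer from boundaries to cones. There are bi-Lipschitz embeddings $Z^{H,V}\times[0,1] \hookrightarrow Z\times[0,1]$, realized by $K\times[0,1]$, and $Z^{H,V}\times[0,1]$ is bi-Lipschitz to $\bdry RT^{H,V}$ by Theorem~\ref{thm:round-tree-confdim}. Two standard properties of the hyperbolic cone \cite{BonkSchramm} complete the reduction: a bi-Lipschitz embedding of compact Ahlfors regular spaces induces a quasi-isometric embedding of their cones; and $\Cone(\bdry X)$ is quasi-isometric to $X$ for a visual Gromov hyperbolic geodesic space $X$, applied here to $X = RT^{H,V}$, which is quasi-isometric to the visual $\CAT(-1)$ complex of Theorem~\ref{thm:round-tree-confdim}. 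Composing, $RT^{H,V}$ regularly embeds in $\Cone(Z\times[0,1])$. (Alternatively one bypasses these two facts by building a round tree graph directly as a subgraph of a hyperbolic filling of $Z\times[0,1]$: dyadic descendants of $K$ supply the vertical branching and dyadic subintervals of $[0,1]$ at matching scales the horizontal branching, and one checks the axioms of Definition~\ref{def:round-tree-graph} up to quasi-isometry.)

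Finally I apply Proposition~\ref{prop:roundtree-lower} to $RT^{H,V}$: since $Q_{RT}>p$, for all $r>1$ there is $Y\subset RT^{H,V}$ with $|Y|\asymp r$ and $\Lambda^p_{RT^{H,V}}(|Y|) \geq c|Y|^{1-\frac1p+\epsilon}$ for some $\epsilon=\epsilon(p,Q_{RT})>0$. Monotonicity of $\Lambda^p$ under the regular embedding $RT^{H,V}\hookrightarrow\Cone(Z\times[0,1])$ then yields $\Lambda^p_{\Cone(Z\times[0,1])}(r) \gtrsim r^{1-\frac1p+\epsilon}$, the asserted estimate. In particular $p$ lies outside $\{q : \Lambda^q_{\Cone(Z\times[0,1])}(r)\lesssim r^{1-1/q}\}$ for every $p<1+Q$, so $\pcrit(\Cone(Z\times[0,1])) \geq 1+Q$, and with the upper bound $\pcrit(\Cone(Z\times[0,1])) = 1+Q = \Confdim(Z\times[0,1])$. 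The main obstacle is the construction in the second and third steps: making the ball tree well-separated enough that $K$ is genuinely bi-Lipschitz to $Z^{H,V}$, and ensuring that the passage from a bi-Lipschitz embedding of boundaries to a regular embedding of hyperbolic cones is valid; the remaining bookkeeping with $H,V,\eta,\epsilon$ is routine.
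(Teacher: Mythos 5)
Your proposal is correct and follows essentially the same route as the paper: embed $Z^{H,V}\times[0,1]$ bi-Lipschitz into $Z\times[0,1]$ with $1+\frac{\log V}{\log H}$ lying between $p$ and $1+Q$, pass to a quasi-isometric embedding of hyperbolic cones \`a la Bonk--Schramm, and conclude via Proposition~\ref{prop:roundtree-lower} and monotonicity of Poincar\'e profiles under regular maps. The only minor differences are that you re-derive the Ahlfors regular Cantor subset of $Z$ by hand where the paper simply cites Mattila--Saaranen, and by taking $V\approx H^{Q}$ with $H$ large you arrange $p<1+\frac{\log V}{\log H}$ directly, so you bypass the paper's additional interpolation step via Corollary~\ref{cor:lqlpweakbound}.
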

In the following proof, recall that the spaces $Z^{H,V}$ were defined in Section~\ref{sec:round-trees}.
\begin{proof}
	That $\Confdim(Z \times [0,1])=1+Q$ essentially goes back to Pansu~\cite[Example 3.4]{Pan-89-cdim}; for discussion see \cite[Example 4.1.9]{Mac-Tys-cdimexpo}.

	For any $p \in [1,1+Q)$, choose $H \geq 2, n \geq 2$ so that for $V=2^n$, we have $p \leq 1+\frac{\log V}{\log H} < 1+Q$.
	Then by \cite[Theorem 3.1]{Mattila-Saaranen-09-existence-AR-cantor-subsets} there exists a bi-Lipschitz embedding $Z^{H,V} \to Z$.
	Thus there exists a bi-Lipschitz embedding $Z^{H,Z}\times[0,1] \to Z \times [0,1]$, which extends to a quasi-isometric embedding $\Cone(Z^{H,Z} \times [0,1]) \to \Cone(Z \times [0,1])$.
	If $p \geq \frac{1}{2}(1+\frac{\log V}{\log H})$ then we are done by Proposition~\ref{prop:roundtree-lower}.
	Any remaining case where $p < \frac{1}{2}(1+\frac{\log V}{\log H})$ then follows from Corollary~\ref{cor:lqlpweakbound}.
\end{proof}

\subsection{Heintze manifolds $\R^n\rtimes_{a_1,\ldots,a_n}\R$}

The bounds above apply to the following groups.
\begin{definition} Given $\underline{a}=(a_1,\ldots,a_n)\in\R^n$, we denote by $\R^n\rtimes_{\underline{a}}\R$, the semidirect product where the action of $\R$ is given by
\[
 \psi(t)(x_1,\ldots,x_n) = (\exp(a_1t)x_1,\ldots,\exp(a_nt)x_n).
\]
\end{definition}
Note that for non-zero $\lambda\in\R$, $\R^n\rtimes_{\underline{a}}\R$ and $\R^n\rtimes_{\lambda \underline{a}}\R$ are isomorphic.

When all $a_i > 0$, the group $\R^n\rtimes_{\underline{a}}\R$ is Gromov hyperbolic, indeed, it is a Heintze group admitting a metric of negative sectional curvature.
Assuming $a_1=\min a_i$, the boundary $\bdry (\R^n\rtimes_{\underline{a}}\R)$, minus a point, is quasi-M\"obius equivalent to
\[
	(\R,d^{1/a_1}) \times(\R,d^{1/a_2}) \times \cdots \times (\R,d^{1/a_n})
\]
and hence to 
\[
	(\R,d) \times(\R,d^{a_1/a_2}) \times (\R,d^{a_1/a_3}) \times \cdots \times (\R,d^{a_1/a_n})
\]
where the metric $d$ denotes the standard metric on $\R$; see for example \cite[Section 2.5]{Dymarz-10-diagonal-Tukia}.  Having this local product structure, the conformal dimension is well-known.

\begin{theorem}[{\cite[Example 3.4]{Pan-89-cdim}}]  When $0 < a_1\leq a_2 \leq \ldots \leq a_n$, $\R^n\rtimes_{\underline{a}}\R$ is hyperbolic and the conformal dimension of its boundary is $\sum_{i=1}^n \frac{a_i}{a_1}$.
\end{theorem}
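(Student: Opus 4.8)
The plan is to obtain hyperbolicity from a curvature argument, and then to sandwich the conformal dimension of the boundary between the value $Q:=\sum_{i=1}^n a_i/a_1$ and itself. For hyperbolicity: with $A=\mathrm{diag}(a_1,\dots,a_n)$ and all $a_i>0$, every eigenvalue of $A$ is positive, so by Heintze's theorem $\R^n\rtimes_{\underline{a}}\R$ admits a left-invariant Riemannian metric of negative (hence, by homogeneity, pinched) sectional curvature, and a complete simply connected manifold of pinched negative curvature is Gromov hyperbolic. For the boundary: in an upper-half-space model, $\bdry(\R^n\rtimes_{\underline{a}}\R)$ minus the point fixed by the $\R$-flow is the horospherical copy of $\R^n$ equipped with a parabolic visual quasi-metric, and the explicit formula for the latter (see \cite[Section 2.5]{Dymarz-10-diagonal-Tukia} and the discussion preceding the statement) shows that --- after rescaling $\underline{a}$ by $1/a_1$, which leaves the group unchanged --- this space is quasi-M\"obius equivalent to $W:=(\R,d)\times\prod_{i=2}^n(\R,d^{a_1/a_i})$; I would take this identification as input. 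Writing $s_1=1$ and $s_i=a_1/a_i\in(0,1]$, each factor $(\R,d^{s_i})$ is Ahlfors $\tfrac1{s_i}$-regular, so $W$ with the maximum metric is Ahlfors $Q$-regular with $Q=\sum_i \tfrac1{s_i}=\sum_i \tfrac{a_i}{a_1}$; inverting this metric at the missing point gives an Ahlfors $Q$-regular metric on the compact boundary within its quasisymmetry gauge, so $\Confdim(\bdry(\R^n\rtimes_{\underline{a}}\R))\le Q$.

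For the reverse inequality --- the heart of the matter --- write $W=(\R,d)\times F$ with $F:=\prod_{i=2}^n(\R,d^{s_i})$ Ahlfors $(Q-1)$-regular, and fix a closed ball $F_0\subseteq F$, which is compact and Ahlfors $(Q-1)$-regular. Since conformal dimension is monotone under passing to subspaces and is a quasi-M\"obius invariant,
\[
\Confdim(\bdry(\R^n\rtimes_{\underline{a}}\R))=\Confdim(W)\ge\Confdim\big([0,1]\times F_0\big)=1+(Q-1)=Q,
\]
where the last equality is the standard fact $\Confdim(C\times[0,1])=\dimH C+1$ for compact Ahlfors regular $C$ (\cite[Example 4.1.9]{Mac-Tys-cdimexpo}); together with the upper bound this gives $\Confdim(\bdry(\R^n\rtimes_{\underline{a}}\R))=Q$. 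The real work hides in that last fact, namely Pansu's modulus argument: one takes the curve family $\Gamma$ of horizontal unit segments $t\mapsto(t,x)$, $x\in F_0$, checks by Fubini that $\rho\equiv\mathbf{1}$ is admissible (so $\mathrm{Mod}_Q(\Gamma)\le\cH^{Q-1}(F_0)<\infty$) and by H\"older along each segment that $\mathrm{Mod}_Q(\Gamma)\gtrsim\cH^{Q-1}(F_0)>0$, reruns the estimate scale-invariantly in every ball, and concludes via the quasisymmetric quasi-invariance of $Q$-modulus that no quasisymmetric model of dimension $<Q$ can exist. I expect the main obstacle to be exactly this step: getting the modulus bound into a form that survives an arbitrary quasisymmetry.

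An alternative route to the lower bound, internal to this paper, avoids modulus altogether: since $F_0$ is Ahlfors $(Q-1)$-regular, \cite[Theorem 3.1]{Mattila-Saaranen-09-existence-AR-cantor-subsets} provides bi-Lipschitz copies of $Z^{H,V}$ inside $F_0$ for every $\tfrac{\log V}{\log H}<Q-1$, hence bi-Lipschitz copies of $Z^{H,V}\times[0,1]$ inside $W\subseteq\bdry(\R^n\rtimes_{\underline{a}}\R)$, hence (taking hyperbolic cones) quasi-isometrically embedded copies of $RT^{H,V}$ in $\R^n\rtimes_{\underline{a}}\R$ via Theorem~\ref{thm:round-tree-confdim}. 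Then $1+\tfrac{\log V}{\log H}=\pcrit(RT^{H,V})\le\pcrit(\R^n\rtimes_{\underline{a}}\R)\le\Confdim(\bdry(\R^n\rtimes_{\underline{a}}\R))$ by Theorem~\ref{thm:pcritRT}, regular-map monotonicity of $\pcrit$, and Theorem~\ref{thm:crit-exponent-previous-upper-bound}; letting $\tfrac{\log V}{\log H}\to Q-1$ recovers $\Confdim(\bdry(\R^n\rtimes_{\underline{a}}\R))\ge Q$.
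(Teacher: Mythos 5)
The paper does not actually prove this statement: it records the quasi-M\"obius product structure of $\bdry(\R^n\rtimes_{\underline a}\R)$ minus a point (citing Dymarz) and then cites Pansu for the dimension computation, so any complete argument you give is necessarily ``a different route''. Your hyperbolicity argument (Heintze) and your upper bound (the parabolic boundary $W=(\R,d)\times\prod_{i\ge2}(\R,d^{a_1/a_i})$ is Ahlfors $Q$-regular with $Q=\sum a_i/a_1$, and inversion/quasi-M\"obius invariance transfers this to the compact visual boundary) are the standard ones and are fine. Your second route to the lower bound is the genuinely interesting one here: it is internal to this paper's machinery (Mattila--Saaranen to embed $Z^{H,V}$ in the Ahlfors $(Q-1)$-regular factor, Theorem~\ref{thm:pcritRT}/Proposition~\ref{prop:roundtree-lower} for $\pcrit(RT^{H,V})$, monotonicity of $\pcrit$ under regular maps via the Bonk--Schramm cone of the embedding, and Theorem~\ref{thm:crit-exponent-previous-upper-bound} for $\pcrit\le\Confdim$), it is not circular since none of those inputs use Pansu's computation, and it realises the paper's stated aim of recovering conformal dimension lower bounds from Poincar\'e profiles. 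This is essentially the content of Theorem~\ref{thm:product-crit-exponent} and Corollary~\ref{cor:crit-exponent-heintze}, repackaged to reprove the cited result rather than assume it.

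One step in your primary (modulus) route needs repair. You invoke ``conformal dimension is monotone under passing to subspaces'' to reduce to $[0,1]\times F_0$, but $\Confdim$ in this paper is the \emph{Ahlfors regular} conformal dimension, and subset monotonicity is not a formal property of it: a quasisymmetric image of the ambient space restricts to a quasisymmetric image of the subset, but that image need not be Ahlfors regular, so the infimum defining $\Confdim(S)$ is not obviously controlled by $\Confdim(W)$. The standard fixes are either (a) run the curve-family lower bound in the ambient space directly: $W$ itself is Ahlfors $Q$-regular and the family of horizontal unit segments over $F_0$ has positive $Q$-modulus, so Tyson's theorem gives $\Confdim(W)=Q$ with no passage to a subspace; or (b) pass through the Hausdorff-dimension version of conformal dimension, which \emph{is} monotone under quasisymmetric embeddings (hence under inclusion), note that Pansu's diffuse-measure argument bounds that version of the dimension of $[0,1]\times F_0$ below by $Q$, and use that the AR conformal dimension dominates it. As written, your final sentence of that paragraph concedes the quasisymmetry-invariance step is only sketched; with fix (a) or (b) it becomes a complete argument, and in any case your $\pcrit$-based route already closes the gap independently.
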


We now see that the conformal dimension of the boundary equals the critical exponent $\pcrit$.
\begin{corollary}\label{cor:crit-exponent-heintze}
	Let $G=\R^n\rtimes_{\underline{a}}\R$. For every $1\leq p < \Confdim(\bdry G)$ there is some $\varepsilon=\varepsilon(p)>0$ such that
\[
 \Lambda^p_G(r)\gtrsim r^{1-1/p+\varepsilon}.
\]
	In particular, $\pcrit(G) = \Confdim(\bdry G)$.
\end{corollary}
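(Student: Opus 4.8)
The plan is to reduce the statement to Proposition~\ref{prop:roundtree-lower} (equivalently, to Theorem~\ref{thm:product-crit-exponent}) by exploiting the known local product structure of $\bdry G$. After rescaling and reordering $\underline{a}$ we may assume $0<a_1\le a_2\le\cdots\le a_n$, and we may assume $n\ge 2$: if $n=1$ then $\Confdim(\bdry G)=1$, there is no $p\in[1,\Confdim(\bdry G))$ to treat, and $\pcrit(G)=1$ follows from $1\le\pcrit(G)\le\Confdim(\bdry G)$ (Theorem~\ref{thm:crit-exponent-previous-upper-bound}). Write $Q_0:=\Confdim(\bdry G)=\sum_{i=1}^n a_i/a_1$. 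By the description recalled above, $\bdry G$ minus a point is quasi-M\"obius equivalent to $(\R,d)\times Z$, where $Z:=\prod_{i=2}^n(\R,d^{a_1/a_i})$; as a product of Ahlfors regular spaces, $Z$ is Ahlfors regular of dimension $\sum_{i=2}^n a_i/a_1=Q_0-1$, and any closed ball $B\subset Z$ is a compact Ahlfors $(Q_0-1)$-regular space. In particular a suitable bounded neighbourhood of a point of $\bdry G$ is quasisymmetrically equivalent to $[0,1]\times B$.

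Now fix $p\in[1,Q_0)$. First I would pick a rational $q=m/\ell$ (with $m,\ell\ge 1$) in the nonempty open interval $(p-1,Q_0-1)$ and set $V=2^m$, $H=2^\ell$, so that $Q:=1+\tfrac{\log V}{\log H}=1+q$ satisfies $p<Q<Q_0$. Since $\tfrac{\log V}{\log H}=q<Q_0-1$ is strictly below the Ahlfors regular dimension of $B$, \cite[Theorem 3.1]{Mattila-Saaranen-09-existence-AR-cantor-subsets} provides a bi-Lipschitz embedding $Z^{H,V}\hookrightarrow B$, hence a bi-Lipschitz embedding $Z^{H,V}\times[0,1]\hookrightarrow[0,1]\times B$ onto a bounded subset of $(\R,d)\times Z$; composing with the quasi-M\"obius equivalence yields a quasisymmetric embedding of the compact Ahlfors regular space $Z^{H,V}\times[0,1]$ into $\bdry G$. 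Exactly as in the proof of Theorem~\ref{thm:product-crit-exponent} (using the Bonk--Schramm correspondence, which promotes a quasisymmetric embedding of Ahlfors regular boundaries to a quasi-isometric embedding of their hyperbolic cones), this extends to a quasi-isometric embedding $\Cone(Z^{H,V}\times[0,1])\hookrightarrow\Cone(\bdry G)\simeq G$. By Theorem~\ref{thm:round-tree-confdim}, $\bdry RT^{H,V}$ carries a visual metric bi-Lipschitz to $Z^{H,V}\times[0,1]$, so $\Cone(Z^{H,V}\times[0,1])\simeq RT^{H,V}$, and we conclude that $RT^{H,V}$ regularly embeds into $G$; hence $\Lambda_G^p\gtrsim\Lambda_{RT^{H,V}}^p$.

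Applying Proposition~\ref{prop:roundtree-lower} to $\Gamma=RT^{H,V}$ with this $Q=1+\tfrac{\log V}{\log H}>p$ produces $\varepsilon:=\tfrac{Q-p}{p(Q-p+pQ)}>0$ with $\Lambda_{RT^{H,V}}^p(r)\gtrsim r^{1-1/p+\varepsilon}$, and therefore $\Lambda_G^p(r)\gtrsim r^{1-1/p+\varepsilon}$, as required. Since this holds for every $p<Q_0$, no such $p$ lies in $\{p:\Lambda_G^p(r)\lesssim r^{1-1/p}\}$, so $\pcrit(G)\ge Q_0$; combined with $\pcrit(G)\le\Confdim(\bdry G)=Q_0$ from Theorem~\ref{thm:crit-exponent-previous-upper-bound}, this gives $\pcrit(G)=\Confdim(\bdry G)$.

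The step I expect to be the main obstacle is promoting the boundary embedding of $Z^{H,V}\times[0,1]$ to a quasi-isometric embedding of hyperbolic cones, together with the associated quasi-M\"obius-versus-quasisymmetric bookkeeping on a bounded region of the parabolic visual boundary of $G$; but this is precisely the mechanism already used in the proof of Theorem~\ref{thm:product-crit-exponent} and is handled by the standard Bonk--Schramm theory of hyperbolic cones (the spaces involved being doubling and uniformly perfect), so once that machinery is in place the argument is routine. If for some $p$ the admissible choices of $H,V$ force $\tfrac{\log V}{\log H}$ to be inconveniently far from $p$, one can instead run the argument at a larger exponent and descend to $p$ via Corollary~\ref{cor:lqlpweakbound}, exactly as in the proof of Theorem~\ref{thm:product-crit-exponent}.
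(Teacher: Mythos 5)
Your argument is correct and is essentially the paper's: the paper proves this corollary in one line by noting that $\bdry G$ contains a bi-Lipschitz copy of $[0,1]\times\bigl(([0,1],d^{a_1/a_2})\times\cdots\times([0,1],d^{a_1/a_n})\bigr)$, a product of $[0,1]$ with a compact Ahlfors $(\Confdim(\bdry G)-1)$-regular space, and then invokes Theorem~\ref{thm:product-crit-exponent}. What you have done is simply inline the proof of that theorem (Mattila--Saaranen embedding of $Z^{H,V}$, promotion to a quasi-isometric embedding of cones, identification with $RT^{H,V}$, and Proposition~\ref{prop:roundtree-lower}), so the route is the same.
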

\begin{proof}
	Follows from Theorem~\ref{thm:product-crit-exponent}, as $\bdry G$ contains a bi-Lipschitz copy of
	\[
		[0,1] \times \left( ([0,1],d^{a_2/a_1}) \times \cdots \times ([0,1], d^{a_n/a_1})\right). \qedhere
	\]
\end{proof}

\begin{corollary} If there is a regular map $\R^n\rtimes_{\underline{a}}\R\to\R^n\rtimes_{\underline{a'}}\R$, where $1= a_1\leq \ldots \leq a_n$ and $1= a'_1\leq \ldots \leq a'_{n'}$, then $\sum_{i=1}^n a_i \leq \sum_{j=1}^{n'} a'_j$.
\end{corollary}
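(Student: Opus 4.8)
The plan is to combine the monotonicity of the critical exponent $\pcrit$ under regular maps with the previous corollary, which identifies $\pcrit$ with the conformal dimension of the boundary for these Heintze groups. Since a regular map $\R^n\rtimes_{\underline a}\R \to \R^n\rtimes_{\underline{a'}}\R$ exists, and both groups are finitely generated (they are cocompact lattices are not needed here — they are quasi-isometric to bounded degree graphs via their word metrics, or more simply we regard these Lie groups as metric spaces with the property that $\Lambda^p$ is well-defined up to $\simeq$), the monotonicity property $\pcrit(X)\le \pcrit(Y)$ for a regular embedding $X\to Y$ applies. Thus I would write $\pcrit(\R^n\rtimes_{\underline a}\R) \le \pcrit(\R^{n'}\rtimes_{\underline{a'}}\R)$.

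Next I would invoke Corollary~\ref{cor:crit-exponent-heintze} twice: with the normalization $a_1=1$, we have $\Confdim(\bdry(\R^n\rtimes_{\underline a}\R)) = \sum_{i=1}^n a_i/a_1 = \sum_{i=1}^n a_i$ by Pansu's theorem, and similarly $\Confdim(\bdry(\R^{n'}\rtimes_{\underline{a'}}\R)) = \sum_{j=1}^{n'} a'_j$ since $a'_1=1$. Corollary~\ref{cor:crit-exponent-heintze} gives $\pcrit = \Confdim(\bdry\,\cdot\,)$ in each case. Chaining these equalities with the inequality from the previous paragraph yields $\sum_{i=1}^n a_i \le \sum_{j=1}^{n'} a'_j$, which is exactly the claim.

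The only genuine subtlety is the meaning of ``regular map'' between the Lie groups $\R^n\rtimes_{\underline a}\R$, since regular maps and Poincar\'e profiles are defined for bounded degree graphs. I would address this by recalling from the introduction that for any metric space quasi-isometric to a bounded degree graph, $\Lambda^p$ (and hence $\pcrit$) is well-defined up to $\simeq$ independent of the choice; and that a coarse embedding (in particular a regular map in the metric-space sense, or the restriction to nets/orbits) between such spaces induces a regular map between the associated graphs, so the monotonicity $\pcrit(X)\le\pcrit(Y)$ still holds. This is routine and parallels the discussion already given for regular maps between Cayley graphs, so I do not expect it to be the main obstacle — indeed there is no real obstacle here, as the corollary is a direct consequence of results already established in the excerpt. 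If one wanted, one could also phrase the whole statement at the level of cocompact lattices or quasi-isometric bounded degree graphs to avoid any ambiguity.

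\begin{proof}
	By Corollary~\ref{cor:crit-exponent-heintze}, $\pcrit(\R^n\rtimes_{\underline{a}}\R) = \Confdim(\bdry(\R^n\rtimes_{\underline{a}}\R)) = \sum_{i=1}^n a_i$ (using $a_1=1$ and Pansu's formula), and likewise $\pcrit(\R^{n'}\rtimes_{\underline{a'}}\R) = \sum_{j=1}^{n'} a'_j$. A regular map $\R^n\rtimes_{\underline{a}}\R\to\R^{n'}\rtimes_{\underline{a'}}\R$ induces a regular map between bounded degree graphs quasi-isometric to these groups, so by the monotonicity of the critical exponent under regular maps, $\pcrit(\R^n\rtimes_{\underline{a}}\R) \leq \pcrit(\R^{n'}\rtimes_{\underline{a'}}\R)$. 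Combining gives $\sum_{i=1}^n a_i \leq \sum_{j=1}^{n'} a'_j$.
\end{proof}
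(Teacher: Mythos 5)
Your proof is correct and is exactly the argument the paper intends (the paper states this corollary without proof, as an immediate consequence of Corollary~\ref{cor:crit-exponent-heintze}, Pansu's computation $\Confdim(\bdry(\R^n\rtimes_{\underline a}\R))=\sum_i a_i/a_1$, and monotonicity of $\pcrit$ under regular maps). Your handling of the routine point that a regular map between the Lie groups induces one between quasi-isometric bounded degree graphs is also fine.
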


\subsection{Random groups}
\label{ssec:random-groups}

We now apply the round tree bounds above to certain random groups.   

We consider random groups in the sense of Gromov: 
fix an integer $m \geq 2$, a function $n=n(\ell):\N\to\N$ and a property $P$ of a group presentation.
Given $\ell\in \N$, let $G= \langle s_1,\ldots, s_m | r_1, \ldots, r_n\rangle$
be a group presentation where each $r_i$ is chosen independently and uniformly at random from all cyclically reduced words of length $\ell$ in $\langle s_1,\ldots, s_m \rangle$.  
If $P$ holds with probability $\to 1$ as $\ell \to \infty$, we say $P$ holds \emph{asymptotically almost surely (a.a.s.)}.

When $n = \lfloor C\ell^K \rfloor$ for some constants $C>0$ and $K \geq 0$ this is the \emph{polynomially many relator model (of degree $K$)}. The degree $0$ case, When $n$ is constant, is the random \emph{few (or $n$-)relator model}. When $n = \lfloor (2m-1)^{\ell d} \rfloor$ for some constant \emph{density} $d\in(0,1)$ this is \emph{(Gromov's) density model}.

In all these models, a.a.s.\ the random group is hyperbolic.  In the few and polynomially many relator models with $n \geq 1$, and in the density model with $d \in (0,1/2)$, a.a.s.\ the random group is non-elementary hyperbolic with Menger sponge boundary.  See \cite{Oll-05-rand-grp-survey, Mac-16-confdim-subcplxs} for further details.

Combinatorial round trees have been embedded into random groups in these models,
see 
\cite[\S 7.2]{Mac-16-confdim-subcplxs}, \cite[\S 8]{Mac-16-confdim-subcplxs} and \cite[\S 3.2, \S 5]{Frost-22-confdim-random}.
We thus have:
\begin{corollary}\label{cor:crit-exponent-rand-grp}
	A polynomial degree $K$ random group $G$ satisfies 
	\[
		2+K-\frac{5\log\log\ell}{\log\ell} 
		\leq \pcrit(G) 
		\leq \Confdim(\bdry G)
		\leq 2+K + \frac{2(K+1)\log\log\ell}{\log\ell}
	\] 
	a.a.s.\ as $\ell \to \infty$.

	A density $d \in (0,\frac{1}{2})$ random group satisfies 
	\[
		\frac{d(1-2d)^5 \ell \log(2m-1)}{C|\log(d(1/2-d))|} 
		\leq \pcrit(G) 
		\leq \Confdim(\bdry G)
		\leq \frac{Cd\ell \log(2m-1)}{(1-2d)|\log d|}
	\]
	a.a.s.\ as $\ell \to \infty$.
\end{corollary}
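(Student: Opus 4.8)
The plan is to deduce the corollary entirely from results already in place: the upper bounds are either known or immediate, and the lower bounds follow by feeding the round trees used in the literature into Theorem~\ref{thm:pcritRT} in place of Theorem~\ref{thm:round-tree-confdim}.

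First I would dispose of the upper bounds. The inequality $\pcrit(G) \le \Confdim(\bdry G)$ is Theorem~\ref{thm:crit-exponent-previous-upper-bound} applied with $Z = \bdry G$, using that $\Cone(\bdry G)$ is quasi-isometric to $G$ for $G$ hyperbolic. The bounds $\Confdim(\bdry G) \le 2+K+\frac{2(K+1)\log\log\ell}{\log\ell}$ in the polynomial degree $K$ model and $\Confdim(\bdry G) \le \frac{Cd\ell\log(2m-1)}{(1-2d)|\log d|}$ in the density $d$ model are precisely the conformal dimension upper bounds of \cite{Mac-16-confdim-subcplxs} and \cite{Frost-22-confdim-random}, so nothing new is needed there.

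For the lower bounds I would recall that the matching lower bounds on $\Confdim(\bdry G)$ in those references are obtained by exhibiting, a.a.s., a combinatorial round tree $A$ (Definition~\ref{def:comb-round-tree-complex}) inside the Cayley $2$-complex of $G$, with vertical branching $V$ and horizontal branching in $[2,H]$ whose parameters satisfy $1+\frac{\log V}{\log H} \ge 2+K-\frac{5\log\log\ell}{\log\ell}$ (polynomial model) or $1+\frac{\log V}{\log H} \ge \frac{d(1-2d)^5\ell\log(2m-1)}{C|\log(d(1/2-d))|}$ (density model), and then applying Theorem~\ref{thm:round-tree-confdim}; see \cite[\S 7.2, \S 8]{Mac-16-confdim-subcplxs} and \cite[\S 3.2, \S 5]{Frost-22-confdim-random}. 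The only change I make is in the last step: after the bounded modification discussed following Definition~\ref{def:comb-round-tree-complex}, the dual graph $\Gamma$ of $A$ is a round tree graph in the sense of Definition~\ref{def:round-tree-graph} with the same branching data and is quasi-isometric to $A$, hence regularly embeds in $G$. Theorem~\ref{thm:pcritRT} then gives $\pcrit(\Gamma) \ge 1+\frac{\log V}{\log H}$, and monotonicity of $\pcrit$ under regular embeddings between bounded degree graphs yields $\pcrit(G) \ge \pcrit(\Gamma) \ge 1+\frac{\log V}{\log H}$, which is the desired lower bound.

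The main obstacle is purely bookkeeping: one must check that the round trees constructed in \cite{Mac-16-confdim-subcplxs, Frost-22-confdim-random} genuinely have branching parameters $V, H$ for which $1+\frac{\log V}{\log H}$ reproduces the exact expressions in the statement — tracking the dependence of $V$ and $H$ on $\ell$, $K$, $d$ and $m$ through the small-cancellation estimates there — and, in the density case, that the constant $C$ can be chosen uniformly so the same $C$ appears in both the lower and the upper bound. A minor additional point is that the quasi-isometric embedding of $A$ into the Cayley complex descends to a regular map of the (bounded degree) dual graphs, which is automatic since quasi-isometric embeddings of bounded degree graphs are regular maps.
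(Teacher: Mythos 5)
Your proposal is correct and follows essentially the same route as the paper: the upper bounds come from Theorem~\ref{thm:crit-exponent-previous-upper-bound} together with the conformal dimension estimates of \cite{Mac-16-confdim-subcplxs,Frost-22-confdim-random}, and the lower bounds come from the combinatorial round trees embedded in those papers, passed through Theorem~\ref{thm:pcritRT} and monotonicity of $\pcrit$ under regular embeddings. The bookkeeping you flag is exactly what the paper also delegates to the cited constructions, so no new verification is needed beyond what you describe.
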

In particular, for $K'>K$, polynomial degree $K'$ random groups do not coarsely embed in polynomial degree $K$ groups,
and at each density $d\in (0,\frac{1}{2})$, high $\ell$ random groups do not coarsely embed in low $\ell$ random groups.

\begin{remark}
	Note that there is a sharper lower bound on the conformal dimension for random groups in Gromov's at density $d$ close to $\frac{1}{2}$ by Oppenheim~\cite{Oppenheim-23-FLp-rand-gromov-model}, see also discussion in~\cite[Section 1]{Frost-22-confdim-random}.  This uses spectral methods as opposed to round trees so there is no immediate equivalent lower bound on $p_\Lambda$.
\end{remark}

\subsection{(Relatively) hyperbolic Coxeter groups}
\label{ssec:coxeter-application}

Very recently, Field--Gupta--Lyman--Stark~\cite{Field-Gupta-Lyman-Stark-25-coxeter-confdim} and Cashen--Dani--Schreve--Stark~\cite{Cashen-Dani-Schreve-Stark-25-RACG-confdim} have built quasi-isometrically embedded round trees in certain (relatively) hyperbolic Coxeter groups, getting lower bounds on the conformal dimension of the boundary as a result, and infinitely many quasi-isometry classes of groups. 
By Theorem~\ref{thm:pcritRT} we have the same lower bounds on $p_\Lambda$, with consequent non-embedding results.  For example:

\begin{corollary}
	[{cf.\ \cite[Theorem A]{Field-Gupta-Lyman-Stark-25-coxeter-confdim}}]
	Fix $M \geq 3, m \geq 11$.  Let $W_\Gamma$ be a Coxeter group whose underlying graph $\Gamma$ has $m$ vertices and all edge labels in $[3,M]$.  Then
	\[
		p_\Lambda(W_\Gamma) \geq 1+ \frac{\log(\lfloor \frac{m-5}{3}\rfloor)}{\log(2M-1}.
	\]
\end{corollary}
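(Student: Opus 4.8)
The plan is to deduce this directly from Theorem~\ref{thm:pcritRT}, feeding in the round tree that Field--Gupta--Lyman--Stark construct while proving their conformal dimension lower bound~\cite{Field-Gupta-Lyman-Stark-25-coxeter-confdim}. As flagged in the surrounding discussion, it is the embedded round tree itself---not the conformal dimension estimate it produces---that transfers to $\pcrit$.

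First I would extract from \cite{Field-Gupta-Lyman-Stark-25-coxeter-confdim} the geometric input: under the hypotheses $M\geq 3$, $m\geq 11$, with every edge of $\Gamma$ labelled in $[3,M]$, there is a combinatorial round tree $A$ in the sense of Definition~\ref{def:comb-round-tree-complex}, with vertical branching $V=\lfloor\frac{m-5}{3}\rfloor$ and horizontal branching in $[2,2M-1]$, together with a quasi-isometric embedding of $A$ into a Cayley graph of $W_\Gamma$ (for instance via the Davis complex). Here the assumption $m\geq 11$ is exactly what forces $V=\lfloor\frac{m-5}{3}\rfloor\geq 2$, so that $A$ is a genuine round tree to which our results apply. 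Next, by the discussion preceding Definition~\ref{def:round-tree-graph}, $A$ is quasi-isometric to the round tree graph $R$ dual to it, which again has vertical branching $V$ and horizontal branching in $[2,2M-1]$. Composing, $R$ quasi-isometrically embeds into the (bounded degree) Cayley graph of $W_\Gamma$, hence admits a regular map to it, so by the monotonicity of $\pcrit$ under regular maps (noted after Definition~\ref{def:profile-crit}) we have $\pcrit(R)\leq\pcrit(W_\Gamma)$. Finally, Theorem~\ref{thm:pcritRT} applied to $R$ gives
\[
	\pcrit(R)\;\geq\;1+\frac{\log V}{\log(2M-1)}\;=\;1+\frac{\log\left(\left\lfloor\frac{m-5}{3}\right\rfloor\right)}{\log(2M-1)},
\]
and chaining the two inequalities proves the corollary.

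The only step that is not purely formal is the first: checking that the subcomplex built in \cite{Field-Gupta-Lyman-Stark-25-coxeter-confdim} genuinely satisfies the axioms of Definition~\ref{def:comb-round-tree-complex} with the advertised constants $V=\lfloor\frac{m-5}{3}\rfloor$ and $H=2M-1$, and that its inclusion into $W_\Gamma$ is a quasi-isometric embedding. Once this is granted, everything else---the passage to the dual graph, the fact that a quasi-isometric (indeed coarse) embedding of bounded degree graphs is a regular map, and the appeals to Theorem~\ref{thm:pcritRT} and to the monotonicity of $\pcrit$---is immediate; in particular $\pcrit(W_\Gamma)$ is well-defined because $\Gamma$ is finite, so $W_\Gamma$ has a bounded degree Cayley graph.
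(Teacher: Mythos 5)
Your proposal is correct and follows exactly the paper's own route: the paper likewise takes the quasi-isometrically embedded combinatorial round tree from \cite{Field-Gupta-Lyman-Stark-25-coxeter-confdim} (with vertical branching $\lfloor\frac{m-5}{3}\rfloor$ and horizontal branching bounded by $2M-1$), passes to the associated round tree graph, and combines Theorem~\ref{thm:pcritRT} with monotonicity of $\pcrit$ under regular maps to transfer the lower bound to $W_\Gamma$. The only external input in both arguments is the verification that the cited construction satisfies Definition~\ref{def:comb-round-tree-complex} with those constants, which you correctly flag.
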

Such Coxeter groups need not be hyperbolic, but are hyperbolic relative to two-dimensional flats.  In this case $p_\Lambda(W_\Gamma)$ is bounded from above by the conformal dimension of the Bowditch space of $W_\Gamma$, as in \cite[Theorem B]{Field-Gupta-Lyman-Stark-25-coxeter-confdim}.

In another direction:
\begin{corollary}
	There is a sequence $(G_i)_{i \in \N}$ of hyperbolic Coxeter groups with Pontryagin sphere boundary with $p_\Lambda(G_i)$ increasing to $\infty$ with $i$; in particular, for any $i<j$, $G_j$ does not regularly map into $G_i$
	(cf.\ \cite[Theorem D]{Field-Gupta-Lyman-Stark-25-coxeter-confdim}).
	These hyperbolic Coxeter groups may be taken to be right-angled~\cite[Theorem 5.1]{Cashen-Dani-Schreve-Stark-25-RACG-confdim}.	
\end{corollary}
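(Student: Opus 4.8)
The plan is to read the sequence off directly from the round-tree constructions of Field--Gupta--Lyman--Stark~\cite{Field-Gupta-Lyman-Stark-25-coxeter-confdim}, and, for the right-angled refinement, of Cashen--Dani--Schreve--Stark~\cite{Cashen-Dani-Schreve-Stark-25-RACG-confdim}, and then to convert their conformal-dimension lower bounds into lower bounds on $\pcrit$ using Theorem~\ref{thm:pcritRT}. First I would invoke \cite[Theorem D]{Field-Gupta-Lyman-Stark-25-coxeter-confdim} (respectively \cite[Theorem 5.1]{Cashen-Dani-Schreve-Stark-25-RACG-confdim}) to produce, for each $i\in\N$, a hyperbolic (respectively right-angled hyperbolic) Coxeter group $G_i$ whose boundary $\bdry G_i$ is a Pontryagin sphere and which contains a quasi-isometrically embedded round tree graph $\Gamma_i$ with horizontal branching in $[2,H_i]$ and vertical branching $V_i\geq 2$, chosen so that $\tfrac{\log V_i}{\log H_i}\to\infty$ as $i\to\infty$. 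Since a quasi-isometric embedding between bounded degree graphs is in particular a regular map, the monotonicity of $\pcrit$ under regular maps recorded after Definition~\ref{def:profile-crit}, together with Theorem~\ref{thm:pcritRT}, then gives
\[
	\pcrit(G_i) \;\geq\; \pcrit(\Gamma_i) \;\geq\; 1 + \frac{\log V_i}{\log H_i} \;\longrightarrow\; \infty .
\]

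After reindexing (passing to a subsequence) I would arrange that $\pcrit(G_i)$ is strictly increasing and tends to $\infty$, which is the first assertion. The non-embedding claim is then formal: if $i<j$ and $G_j$ admitted a regular map into $G_i$, then monotonicity of $\pcrit$ under regular maps would force $\pcrit(G_j)\leq\pcrit(G_i)$, contradicting strict monotonicity of the sequence; hence no such map exists. The right-angled version of the conclusion follows verbatim, replacing the input from \cite{Field-Gupta-Lyman-Stark-25-coxeter-confdim} by that of \cite{Cashen-Dani-Schreve-Stark-25-RACG-confdim}.

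The one point that needs care — and the step I expect to be the main obstacle — is confirming that the round trees built in \cite{Field-Gupta-Lyman-Stark-25-coxeter-confdim, Cashen-Dani-Schreve-Stark-25-RACG-confdim} can be chosen with branching ratios $\log V_i/\log H_i$ that genuinely diverge rather than taking only finitely many values, and that the embeddings provided there are indeed quasi-isometric (equivalently regular) embeddings of the \emph{round tree graphs}. Both should be immediate from the cited work: the unbounded conformal-dimension lower bounds $\Confdim(\bdry G_i)\geq 1+\tfrac{\log V_i}{\log H_i}$ obtained there are produced precisely by applying Theorem~\ref{thm:round-tree-confdim} (i.e.\ \cite[\S 6]{Mac-12-random-cdim}) to these embedded round trees, so the divergence of those conformal-dimension estimates over the sequence is exactly the divergence of $\log V_i/\log H_i$, and the embeddings are stated there as quasi-isometric embeddings. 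Everything else in the argument is already available: the monotonicity of $\pcrit$ under regular maps among bounded degree graphs, and the identity $\pcrit(RT^{H,V}) = 1+\tfrac{\log V}{\log H}$ from Theorem~\ref{thm:pcritRT}.
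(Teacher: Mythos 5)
Your proposal is correct and follows essentially the same route as the paper: the corollary is obtained exactly by feeding the quasi-isometrically embedded round trees of Field--Gupta--Lyman--Stark (resp.\ Cashen--Dani--Schreve--Stark for the right-angled case) into Theorem~\ref{thm:pcritRT}, using that quasi-isometric (hence regular) embeddings give monotonicity of $\pcrit$, so the divergent conformal-dimension lower bounds become divergent lower bounds on $\pcrit(G_i)$, and the non-embedding statement is then formal. Your extra care about passing to a subsequence to make $\pcrit(G_i)$ strictly increasing (using finiteness of each $\pcrit(G_i)\leq\Confdim(\bdry G_i)$) is a reasonable tidying of the same argument.
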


And finally:
\begin{corollary}
	[{cf.\ \cite[Theorem 3.1]{Cashen-Dani-Schreve-Stark-25-RACG-confdim}}]
	For every $n\geq 2$ there exist hyperbolic right-angled Coxeter groups $(W_i)_{i\in \N}$ that virtually algebraically fibre, have virtual cohomological dimension $n$, and have $p_\Lambda(W_i)$ increasing to $\infty$ with $i$; in particular, for any $i<j$, $W_j$ does not regularly map into $W_i$.
\end{corollary}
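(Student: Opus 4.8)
\emph{Proof proposal.}
The plan is to import the construction of Cashen--Dani--Schreve--Stark~\cite{Cashen-Dani-Schreve-Stark-25-RACG-confdim} wholesale and feed it through Theorem~\ref{thm:pcritRT}. Their \cite[Theorem 3.1]{Cashen-Dani-Schreve-Stark-25-RACG-confdim} provides, for each fixed $n \geq 2$ and each $i \in \N$, a hyperbolic right-angled Coxeter group $W_i$ that virtually algebraically fibres and has virtual cohomological dimension $n$; crucially, the lower bound they obtain on $\Confdim(\bdry W_i)$ is witnessed by a quasi-isometrically embedded combinatorial round tree (in the sense of Definition~\ref{def:comb-round-tree-complex}) with vertical branching $V_i$ and horizontal branching in $[2,H_i]$, where $\frac{\log V_i}{\log H_i} \to \infty$ as $i \to \infty$. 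The properties of virtually algebraically fibring and having prescribed vcd are internal to the groups $W_i$ and are untouched by our argument; we simply quote them.

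First I would pass to the dual graph as in Section~\ref{sec:round-trees}, so that the round tree embedded in $W_i$ becomes a round tree graph $\Gamma_i$ with vertical branching $V_i$ and horizontal branching in $[2,H_i]$, quasi-isometric to the original and hence still quasi-isometrically embedded in $W_i$. Since quasi-isometric embeddings between bounded degree graphs are regular maps, this yields a regular map $\Gamma_i \to W_i$. Then by Theorem~\ref{thm:pcritRT},
\[
	\pcrit(\Gamma_i) \geq 1 + \frac{\log V_i}{\log H_i},
\]
and by the monotonicity of $\pcrit$ under regular maps between bounded degree graphs,
\[
	\pcrit(W_i) \;\geq\; \pcrit(\Gamma_i) \;\geq\; 1 + \frac{\log V_i}{\log H_i} \;\longrightarrow\; \infty .
\]
After passing to a subsequence and re-indexing we may assume $\bigl(\pcrit(W_i)\bigr)_{i\in\N}$ is strictly increasing (and still tends to $\infty$), which in particular forces $p_\Lambda(W_i)$ to increase to $\infty$ as claimed.

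The non-embedding statement is then immediate: if $i<j$ and $W_j$ admitted a regular map into $W_i$, monotonicity of $\pcrit$ would give $\pcrit(W_j) \leq \pcrit(W_i)$, contradicting the strict monotonicity of the sequence. As regular maps subsume coarse embeddings and inclusions of finitely generated subgroups, this also rules out those.

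I do not expect a serious obstacle here: all the genuinely hard work, namely the Poincar\'e-constant estimate, has already been carried out in Proposition~\ref{prop:roundtree-lower}, and the monotonicity of $\pcrit$ is recorded after Definition~\ref{def:profile-crit}. The only point requiring care is the bookkeeping in the first step --- verifying that the round trees appearing in \cite{Cashen-Dani-Schreve-Stark-25-RACG-confdim} satisfy (or are quasi-isometric to complexes satisfying) Definition~\ref{def:comb-round-tree-complex} with horizontal branching uniformly bounded below by $2$, so that Theorem~\ref{thm:pcritRT} applies to the dual round tree graphs $\Gamma_i$.
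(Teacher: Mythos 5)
Your proposal is correct and is essentially the paper's own argument: the paper simply quotes the quasi-isometrically embedded round trees of Cashen--Dani--Schreve--Stark, applies Theorem~\ref{thm:pcritRT} to get the same (divergent) lower bounds on $p_\Lambda(W_i)$, and concludes non-embedding from monotonicity of $p_\Lambda$ under regular maps. The only bookkeeping point you flag --- passing from the embedded combinatorial round tree to its dual round tree graph --- is exactly the reduction the paper already records in Section~\ref{sec:round-trees}, so there is no gap.
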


\section{Poincar\'e profiles detect hyperbolic groups of conformal dimension $1$}
\label{sec:no-local-cut-points}

In this section we show Theorem~\ref{thm:power-sep-hyp-no-local-cut-points}: that if an infinite hyperbolic group $G$ does not split over a finite or virtually cyclic group, either $G$ is virtually Fuchsian, or $\sep_G(r) \simeq \Lambda_G^1(r) \gtrsim r^\alpha$ for some $\alpha>0$.

The idea is that the construction of \cite{Mac-10-confdim}, which builds a ``Cantor set of quasi-arcs'' in the boundary of such hyperbolic groups, has as its convex hull a subset of the Cayley graph which is quasi-isometric to a round tree graph in the sense of Definition \ref{def:round-tree-graph}.  The result then follows from Proposition~\ref{prop:roundtree-lower}.

\begin{proof}[Proof of Theorem~\ref{thm:power-sep-hyp-no-local-cut-points}]
	Choose a finite symmetric generating set of $G$ and let $d$ be the word metric on $G$ with respect to this generating set.
	Fix a visual metric $\rho$ on $\bdry G$ with $\rho(z,z') \asymp_C e^{-\epsilon (z|z')_1}$, normalized to have diameter $1$. We recall that, up to bounded additive errors, the geodesic triangle with endpoints $1,z,z'$ is a tripod where the Gromov product $(z|z)'_1$ is the distance from $1$ to the centre of the tripod.
    
	Since $G$ doesn't split over a finite group the boundary $\bdry G$ is connected.
	By Bowditch, either $G$ is virtually Fuchsian or has no local cut points, so assume the latter.
	As in \cite[Proof of Corollary 1.2]{Mac-10-confdim}, $\bdry G$ is then annularly linearly connected, doubling and complete (see op.\ cit.\ for definitions), so \cite[Theorem 1.4]{Mac-10-confdim} holds.  

	We use the following notation for arcs, that is, (sub)spaces of metric spaces homeomorphic to $[0,1]$.
	For points $x,y$ in an arc $A$, $A[x,y]$ denotes the unique subarc with endpoints $x$ and $y$; likewise $A[x,y)$ denotes $A[x,y] \setminus \{y\}$, etc.
	An arc $A$ is a \emph{$\iota$-local $\lambda$-quasi-arc} if for all $x,y \in A$ with $d(x,y) \leq \iota$ then $\diam A[x,y] \leq \lambda d(x,y)$.
	For arcs $A,B$ in a metric space and $\iota>0$, $B$ \emph{$\iota$-follows} $A$ if there exists a not necessarily continuous map $p:B \to A$ such that for any $x,y \in B$, $B[x,y]$ is contained in the $\iota$-neighbourhood of $A[p(x),p(y)]$.

	By \cite[Proof of Theorem 1.4]{Mac-10-confdim}, we can construct arcs  in $\bdry G$ as follows.
	There exists $\lambda \geq 1$ so that 
	for any $\beta< 1/32$ small enough ($\beta =$`$\alpha\delta^*/32\lambda$' in the cited proof, but $\delta^*$ can be shrunk arbitrarily and the proof goes through verbatim; we only additionally require $\beta < 1/12\lambda^2$ below),
	there exist 
	arcs $J_\ba$ for any finite or infinite sequence $\ba = (a_1,a_2,\ldots,a_n) \in \{0,1\}^n$ or $\ba= (a_1,a_2,\ldots) \in \{0,1\}^\N$, with the following properties.
	(We denote the empty sequence by $\emptyset$.)
	Each $J_{\ba}$ has diameter at least $1/2$, 
	each $J_{a_1\cdots a_n}$ is a $\beta^n$-local $\lambda$-quasi-arc which $\beta^n$-follows $J_{a_1\cdots a_{n-1}}$; 
	furthermore, we may assume the `following' maps map endpoints to endpoints.
	Each $J_{a_1\cdots}$ has Hausdorff distance $\leq 2\lambda\beta^n$ to each $J_{a_1\cdots a_n}$ by \cite[Lemma 4.1]{Mac-10-confdim}.
	Moreover, for all $\ba=(a_1,\ldots), \bbb=(b_1,\ldots) \in \{0,1\}^\N$, both the minimum and the Hausdorff distance between $J_\ba$ and $J_\bbb$ are $\asymp \beta^n$ where $n = \min\{k: a_k\neq b_k\}$.
	Finally, there exists a uniform $\lambda'$ so that each $J_{\ba}, \ba \in \{0,1\}^* \cup \{0,1\}^\N$ is a (not local) $\lambda'$-quasi-arc, i.e.\ for all $x,y\in J_{\ba}$, $\diam(J_{\ba}[x,y])\leq \lambda'd(x,y)$.

    Let $W \subset G$ be the union of geodesic rays from $1 \in G$ to points in $J_\ba$ for $\ba \in \{0,1\}^\N$. By hyperbolicity, $W$ is a quasi-convex subset of $G$. Theorem \ref{thm:power-sep-hyp-no-local-cut-points} now reduces to the following proposition.
	\begin{proposition}\label{prop:QI-round-tree-graph} There is an (irregular) round tree graph $A$ which is quasi-isometric to $W$.
    \end{proposition}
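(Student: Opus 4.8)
The plan is to place on $W$, equipped with its intrinsic length metric (which is quasi-isometric to the metric induced from $\Cay(G)$ because $W$ is quasi-convex), exactly the layered structure of a round tree graph in the sense of Definition~\ref{def:round-tree-graph}, with vertical branching $V=2$ coming from the Cantor parameter $\{0,1\}^\N$ and horizontal branching in $[2,H]$ for a finite $H$.

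Fix a ``round-tree scale'' $\eta\asymp\log(1/\beta)/\epsilon$, so that, via the tripod description of a triangle $\{1,z,z'\}$, a $\rho$-ball of radius $\beta^n$ in $\bdry G$ corresponds to a ball of radius $\asymp 1$ at depth $n\eta$ in $\Cay(G)$. First I would build the abstract graph $A$: for each $\ba\in\{0,1\}^\N$ and $n\ge 0$ choose a maximal $\beta^n$-separated subset $N_n(\ba)$ of $J_\ba$, and use that $J_\ba$ has Hausdorff distance $\le 2\lambda\beta^n$ to its truncation $J_{\ba_n}$, and that $J_{\ba_{n+1}}$ $\beta^{n+1}$-follows $J_{\ba_n}$ with endpoints mapping to endpoints, to arrange that $N_n(\ba)$ depends only on $\ba_n$ (write $N_n$) and that $N_{n+1}(\ba_n\gamma)$ refines $N_n$ up to error $\asymp\beta^{n+1}$, for each $\gamma\in\{0,1\}$. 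Take as level-$n$ vertices of $A$ over the word $\ba_n$ the points of $N_n$, in the order they occur along $J_{\ba_n}$, join consecutive ones by \emph{horizontal} edges, and join $x\in N_n$ to $y\in N_{n+1}(\ba_n\gamma)$ by a \emph{vertical} edge when $x$ is the point of $N_n$ nearest the $\asymp\beta^{n+1}$-approximant of $y$ in $J_{\ba_n}$. After relabelling positions within slices by words in $[H]^*$, one checks the axioms of Definition~\ref{def:round-tree-graph}: coherence of the nets gives (1), (2), (4), (5) and makes the set of horizontal children of a vertex an initial segment $[\Delta_{h,v,\gamma}]$; the vertical branching is $2$; and the horizontal branching $\Delta_{h,v,\gamma}=|N_{n+1}(\ba_n\gamma)|/|N_n|$ lies in $[2,H]$ for a finite $H\ge 2$, using that $\bdry G$ is doubling (upper bound) and that, after taking $\beta$ small, the $\beta^n$-nets of the arcs $J_{\ba_n}$ have size $\asymp H^n$ for one fixed $H$ (lower bound) — the latter being a property of the construction in \cite[Proof of Theorem~1.4]{Mac-10-confdim}. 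Each slice is genuinely a quasi-line because the $J_{\ba_n}$ are $\lambda'$-quasi-arcs.

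Next I would define $\Phi\colon V(A)\to W$ by sending a level-$n$ vertex $x\in N_n\subset J_{\ba_n}$ to a point at distance $\asymp n\eta$ from $1$ on a geodesic ray from $1$ to some $x'\in J_\ba$ with $\rho(x,x')\lesssim\beta^n$ (the choices of $x'$ and of the extension $\ba$ are immaterial up to bounded error by the Hausdorff bounds), and show $\Phi$ is a quasi-isometry. Coarse surjectivity is clear: any point of $W$ lies within bounded distance of a geodesic ray to some $z\in J_\bbb$, and at depth $n\eta$ that ray is $\asymp 1$-close to the one defining $\Phi$ of the nearest point of $N_n(\bbb_n)$. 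For the quasi-isometric estimate the crux is that, for $x\in N_n$ over $\ba_n$ and $y\in N_{n'}$ over $\bbb_{n'}$, writing $m$ for the first index with $a_m\ne b_m$ (and $m=\min(n,n')$ if $\ba,\bbb$ agree that far),
\[
 d_W(\Phi(x),\Phi(y))\ \asymp\ d_A(x,y)\ \asymp\ (n-m)\eta+(n'-m)\eta+\big(m\eta-(x'|y')_1\big)^+ .
\]
On the $W$-side this is the standard formula for distances in the convex hull of $1$ together with two geodesic rays in a hyperbolic space — past their divergence point $(x'|y')_1$, the rays at depth $r$ are $\asymp 2(r-(x'|y')_1)$ apart — where quasi-convexity of $W$ realises the ``up and over'' path inside $W$ and the separation estimate $\mathrm{dist}(J_\ba,J_\bbb)\asymp\beta^m$ prevents distinct slabs from reconverging below depth $m\eta$. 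On the $A$-side it is exactly the round-tree-graph distance formula, in which the efficient paths between positions in one slice run up to a common ancestor and back down (cf.\ the discussion after Definition~\ref{def:round-tree-graph}); here one uses that $J_{\ba_m}$, being a $\lambda'$-quasi-arc, is ``interval-like'' in that subarcs never balloon, so the relevant horizontal ancestor-distance matches $\big(m\eta-(x'|y')_1\big)^+$.

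The main obstacle is carrying out the verifications above \emph{uniformly} over $\{0,1\}^\N$: one must (i) confirm, from the construction in \cite[Proof of Theorem~1.4]{Mac-10-confdim}, that the arcs $J_\ba$ are uniformly ``$1$-regular-like'', i.e.\ their $\beta^n$-nets have size $\asymp H^n$ for one fixed $H$ (so the boundary family really resembles $Z^{H,2}\times[0,1]$), which keeps the horizontal branching of $A$ in $[2,H]$ with $H<\infty$; and (ii) make the up-and-over metric comparison precise, uniformly in $n,n',\ba,\bbb$, by combining $\delta$-hyperbolicity, quasi-convexity of $W$, and the following and separation properties of the $J_\ba$. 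Granting Proposition~\ref{prop:QI-round-tree-graph}, Theorem~\ref{thm:power-sep-hyp-no-local-cut-points} follows: $W$ is quasi-convex, so the inclusion of a Rips graph on $W$ into $\Cay(G)$ is a regular map, whence $\Lambda^1_G\gtrsim\Lambda^1_W\simeq\Lambda^1_A$, and applying Proposition~\ref{prop:roundtree-lower} to $A$ with $Q=1+\frac{\log 2}{\log H}>1$ gives $\Lambda^1_A(r)\gtrsim r^{(Q-1)/(2Q-1)}$, so $\alpha:=\frac{Q-1}{2Q-1}>0$ works.
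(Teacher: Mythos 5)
Your overall skeleton is the same as the paper's: discretise each arc $J_{\ba_n}$ by a maximal net at scale $\asymp\beta^n$, send a level-$n$ net point to the point at depth $\asymp \frac{-1}{\epsilon}\log(\beta^n)$ along a geodesic ray from $1$ towards a nearby boundary point, declare horizontal edges between consecutive net points and vertical edges via the $\beta^n$-following maps, and then verify that the resulting graph is a round tree graph with vertical branching $2$ and horizontal branching in $[2,H]$ and that the natural map to $W$ is a quasi-isometry (using hyperbolicity, quasi-convexity of $W$, the separation estimate $\rho(J_{\ba},J_{\bbb})\asymp\beta^m$, and doubling). That is exactly the paper's construction and the same list of ingredients; your ``up-and-over'' distance comparison is a more ambitious version of what the paper does more economically (Lipschitz on edges for $\Pi$, and coarse Lipschitz of $\Pi^{-1}$ on pairs at bounded distance, using that $W$ is coarsely geodesic).

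There is, however, a genuine gap at the one point that actually requires an argument: the \emph{lower} bound $\geq 2$ on the horizontal branching at every vertex, which Definition~\ref{def:round-tree-graph} demands ($H_{\min}\geq 2$) and which Proposition~\ref{prop:roundtree-lower} uses (it is what makes the horizontal path lengths grow geometrically, hence $Q=1+\frac{\log 2}{\log H}>1$). You justify it by asserting that the $\beta^n$-nets of the $J_{\ba_n}$ have size $\asymp H^n$ for a single fixed $H$, calling this ``a property of the construction in \cite[Proof of Theorem 1.4]{Mac-10-confdim}''. That property is not provided there — the arcs are only (local) quasi-arcs, which together with doubling gives an upper bound on net sizes but no matching lower bound with the same base — and even if it held it is a global count, which does not give the per-vertex bound $\Delta_{h,v,\beta}\geq 2$ required by the definition (note also that $\Delta_{h,v,\beta}$ is the branching at a single vertex, not the ratio $|N_{n+1}|/|N_n|$). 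The correct argument, which your sketch never supplies, is local: if some parent cell $J_{\ba'}[z_{\ba'}^j,z_{\ba'}^{j+1})$ received only one level-$n$ net point, then two consecutive level-$n$ net points would project under the following map to points $p,q$ spanning an entire parent cell, so $\diam J_{\ba'}[p,q]\geq \beta^{n-1}/4\lambda$, while the following property plus the local quasi-arc property bound this diameter by $\asymp\lambda\beta^{n}$ — a contradiction once $\beta$ is chosen smaller than a constant times $1/\lambda^{2}$ (the paper takes $\beta<1/12\lambda^{2}$, using the freedom in \cite{Mac-10-confdim} to shrink the scale parameter). Without exploiting this freedom to shrink $\beta$ relative to $\lambda$, the branching-$\geq 2$ claim can fail, and your appeal to an unproven regularity of the arcs does not close the gap. (The upper bound $\leq H$ per cell via doubling, and the coherence/surjectivity of the projections needed for axioms (2)--(4), are fine but also deserve the short quantitative checks the paper gives in Lemmas~\ref{lem:cantorarcs1}--\ref{lem:cantorarcs3}.)
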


    We will prove this proposition via a series of Lemmas.
    
	For each $n$, and each $\ba \in \{0,1\}^n$, take a maximal $\beta^n / 4\lambda$-separated net in $J_{\ba}$ which includes its endpoints and use the arc order to order it as a sequence $(z_{\ba}^{i})_{i=1}^{N_\ba}$.
	For a constant $C_1$ to be determined shortly, define each corresponding point $x_{\ba}^i$ in $G$ by taking a point at distance $\frac{-1}{\epsilon}\log(\beta^n)+C_1$ from $1$ along a geodesic ray to $z_{\ba}^i$.  For convenience set $x_\emptyset^1 = 1 \in G$.  
	By hyperbolicity, we can choose and fix $C_1=C_1(\lambda,C,\epsilon,\delta)$ large enough so that all points in $\{x_{\ba}^i\}_{i=1}^{N_\ba}$ are at least $2\delta$ apart.

	  We will build a round tree graph $A$ with vertex set $\{p_\ba^i: \ba \in \{0,1\}^*, 1 \leq i \leq N_{\ba}\}$ and prove that the map $\Pi:p_\ba^i\mapsto x_\ba^i$ determines a quasi-isometry $A \to W$. We first prove that $\Pi$ is coarsely onto.

	\begin{lemma}\label{lem:cantorarcs0}
		The sets $W$ and $\{x_\ba^i: \ba \in \{0,1\}^*, 1 \leq i \leq N_{\ba}\}$ are at bounded Hausdorff distance.
	\end{lemma}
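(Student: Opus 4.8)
The plan is to establish the two inclusions $\{x_\ba^i\} \subseteq N_K(W)$ and $W \subseteq N_K(\{x_\ba^i\})$ for a single uniform constant $K$, using only the thin-triangles geometry of $G$ recorded above together with the listed properties of the arcs $J_\ba$ and the visual metric estimate $\rho(z,z') \asymp_C e^{-\epsilon(z|z')_1}$. The geometric input used repeatedly is the following standard consequence of hyperbolicity: if $z,z' \in \bdry G$ with $\rho(z,z') \leq r$, then $(z|z')_1 \geq \frac{1}{\epsilon}\log(1/r) - O(1)$, and hence any two geodesic rays from $1$ to $z$ and to $z'$ stay within $O(\delta)$ of each other up to distance $(z|z')_1 - O(\delta)$ from $1$; consequently a point at distance $t$ along a ray to $z$ lies within $\max\{0,\, t-(z|z')_1\} + O(\delta)$ of any ray to $z'$.

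For $\{x_\ba^i\} \subseteq N_K(W)$: fix $\ba \in \{0,1\}^n$ and $i$ (the case $\ba = \emptyset$ is trivial since $x_\emptyset^1 = 1 \in W$). Since $J_\ba$ has Hausdorff distance $\leq 2\lambda\beta^n$ from $J_{\ba 0^\infty}$, there is $z' \in J_{\ba 0^\infty}$ with $\rho(z_\ba^i, z') \leq 2\lambda\beta^n$, so $(z_\ba^i|z')_1 \geq \frac{n|\log\beta|}{\epsilon} - O(1)$. The geodesic ray to $z'$ lies in $W$, while $x_\ba^i$ sits at distance $\frac{n|\log\beta|}{\epsilon} + C_1$ along a ray to $z_\ba^i$; by the estimate above it lies within $C_1 + O(1) + O(\delta)$ of that ray, hence within bounded distance of $W$.

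For $W \subseteq N_K(\{x_\ba^i\})$: let $w$ be a point at distance $t$ from $1$ on a geodesic ray $\gamma$ from $1$ to some $z \in J_\bc$, $\bc = (c_1,c_2,\ldots) \in \{0,1\}^\N$. If $t \leq C_1 + 1$ then $w$ is within bounded distance of $1 = x_\emptyset^1$; otherwise choose $n \in \N$ with $\bigl|t - \bigl(\tfrac{n|\log\beta|}{\epsilon} + C_1\bigr)\bigr| \leq \tfrac{|\log\beta|}{\epsilon}$, and set $\bc_n = (c_1,\ldots,c_n)$. The Hausdorff distance between $J_\bc$ and $J_{\bc_n}$ is $\leq 2\lambda\beta^n$, and $(z_{\bc_n}^i)_i$ is a $\beta^n/4\lambda$-net in $J_{\bc_n}$, so some index $i$ has $\rho(z, z_{\bc_n}^i) \leq 3\lambda\beta^n$, giving $(z|z_{\bc_n}^i)_1 \geq \frac{n|\log\beta|}{\epsilon} - O(1)$. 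Thus $\gamma$ and a ray to $z_{\bc_n}^i$ fellow-travel up to distance $\frac{n|\log\beta|}{\epsilon} - O(1)$ from $1$, and since both $w$ and $x_{\bc_n}^i$ sit at distance $\frac{n|\log\beta|}{\epsilon} + C_1 + O(|\log\beta|/\epsilon)$ from $1$, only a bounded amount past the branch point, we get $d(w,x_{\bc_n}^i) = O(C_1 + \delta + |\log\beta|/\epsilon)$, uniformly bounded.

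The only place care is needed is the bookkeeping that makes the depth $n$ of a chosen net point match the radius $\frac{-1}{\epsilon}\log(\beta^n) + C_1$ at which the corresponding $x_\ba^i$ is placed: this is precisely why the $x_\ba^i$ were normalized to that radius and why the net scale $\beta^n/4\lambda$ and the inter-arc Hausdorff distances $2\lambda\beta^n$ were all chosen comparable to $\beta^n$. With those choices, $w$ and its candidate $x_{\bc_n}^i$ (resp.\ $x_\ba^i$ and the nearby ray of $W$) never land in the regime where the two relevant geodesic rays have diverged by an unbounded distance, and the remaining estimates are routine tracking of the constants $\lambda, C, \epsilon, \delta, C_1$.
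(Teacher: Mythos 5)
Your proposal is correct and follows essentially the same route as the paper's proof: in each direction you match the depth $n$ to the distance of the point from $1$, use the Hausdorff closeness of $J_\bc$ (or $J_{\ba a_{n+1}\cdots}$) to $J_{\bc_n}$ (resp.\ $J_\ba$) together with the $\beta^n/4\lambda$-net property, convert the boundary estimate into a lower bound on the Gromov product, and conclude by fellow-travelling of geodesic rays; the paper's write-up is simply terser, leaving these hyperbolicity estimates implicit.
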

	\begin{proof}
		Any $w \in W$ lies on a geodesic ray from $1$ to some $z \in J_\bbb$ for some $\bbb = (b_i) \in \{0,1\}^\N$.  
	Choose $n$ maximal so that $d(w,e) \geq \frac{-1}{\epsilon}\log(\beta^n)+C_1$.
	As $J_{\bbb}$ is $\beta^{n}$ close to $J_{b_1\cdots b_n}$, there exists some $x_{b_1\cdots b_n}^i$ so that $d(x_{b_1\cdots b_n}^i, w) \leq C_2=C_2(C,\epsilon,\delta,C_1)$.
		
		Conversely, given $x_\ba^i$ for $\ba \in \{0,1\}^n, 1 \leq i \leq N_\ba$, let $\bbb=(\ba,a_{n+1},\ldots) \in \{0,1\}^\N$.  Then as $J_\ba$ and $J_{\bbb}$ have Hausdorff distance $\leq 2\lambda\beta^n$, there exists some $z \in J_{\bbb}$ with $\rho(z_\ba^i, z) \leq 2\lambda\beta^n$.  Thus $d(x_\ba^i,w)$ is uniformly bounded where $w\in W$ is the point on the geodesic ray from $1$ to $z$ with $d(1,w)=\frac{-1}{\epsilon}\log(\beta^n)+C_1$.
	\end{proof}

	Now, each $z_{a_1\cdots a_{n}}^i$ projects by the $\beta^n$-following map to a point in some $J_{a_1\cdots a_{n-1}}[z_{a_1\cdots a_{n-1}}^j, z_{a_1\cdots a_{n-1}}^{j+1})$; denote this function $i\mapsto j$ by 
	\[ \pi_{a_1\cdots a_n}: \{1,\ldots, N_{a_1\cdots a_n}\} \to \{1,\ldots,N_{a_1\cdots a_{n-1}}\}. \]  
	We require $\pi_{a_1\cdots a_n}$ to map endpoints to endpoints, i.e.\ 
	$\pi_{a_1\cdots a_n}(1)=1$ and
	$\pi_{a_1\cdots a_n}(N_{a_1\cdots a_n})=N_{a_1\cdots a_{n-1}}$.

    We can now define the edges of $A$. The set of horizontal edges of $A$ is
    \[
        \{p_\ba^ip_\ba^{i+1}: \ba \in \{0,1\}^*, 1 \leq i < N_{\ba}\}.
    \]
    For each non-empty $(a_1,\ldots,a_n) \in \{0,1\}^*$, and each $j \in \{1,\ldots, N_{a_1\cdots a_{n-1}}\}$, if $i$ is minimal with $\pi_\ba(i)=j$ and $i'$ is minimal with $\pi_\ba(i')=j+1$, connect each of $p_\ba^i,\ldots,p_\ba^{i'-1}$ to $p_{a_1\cdots a_{n-1}}^j$ by a vertical edge.

	The next lemma ensures that $\Pi$ is Lipschitz on horizontal edges of $A$.
    
	\begin{lemma}\label{lem:cantorarcs1} For all non-empty $\ba \in \{0,1\}^*$ and $1 \leq i< N_\ba$, we have $\diam J_\ba[z_\ba^i, z_\ba^{i+1}] \in [\beta^n/4\lambda,\beta^n/2]$.  In particular, $\rho(z_\ba^i, z_\ba^{i+1}) \in [\beta^n/4\lambda,\beta^n/2]$.
	\end{lemma}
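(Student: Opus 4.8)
The plan is to prove the two-sided bound $\diam J_\ba[z_\ba^i, z_\ba^{i+1}] \in [\beta^n/4\lambda, \beta^n/2]$ directly from the properties of the net $(z_\ba^i)_{i=1}^{N_\ba}$ and the (global) quasi-arc property of $J_\ba$; the final assertion about $\rho(z_\ba^i,z_\ba^{i+1})$ is then immediate since $\rho(z_\ba^i, z_\ba^{i+1}) \leq \diam J_\ba[z_\ba^i,z_\ba^{i+1}]$ for the upper bound, and $\rho(z_\ba^i,z_\ba^{i+1}) \geq \beta^n/4\lambda$ follows because $(z_\ba^i)$ is a $\beta^n/4\lambda$-separated set, so consecutive points are at least that far apart in the metric $\rho$.

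\textbf{Lower bound on the diameter.} Since $J_\ba$ is a $\lambda'$-quasi-arc (in fact it suffices here to recall that $J_\ba$ is an arc joining $z_\ba^i$ to $z_\ba^{i+1}$ through $J_\ba[z_\ba^i,z_\ba^{i+1}]$), we have $\diam J_\ba[z_\ba^i, z_\ba^{i+1}] \geq \rho(z_\ba^i, z_\ba^{i+1})$. Because the net $(z_\ba^i)$ is $\beta^n/4\lambda$-separated, $\rho(z_\ba^i, z_\ba^{i+1}) \geq \beta^n/4\lambda$, giving the lower bound $\diam J_\ba[z_\ba^i, z_\ba^{i+1}] \geq \beta^n/4\lambda$.

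\textbf{Upper bound on the diameter.} Here I would use maximality of the net together with the $\beta^n$-local $\lambda$-quasi-arc property of $J_\ba$. Suppose for contradiction that $\diam J_\ba[z_\ba^i, z_\ba^{i+1}] > \beta^n/2$. Since $J_\ba[z_\ba^i,z_\ba^{i+1}]$ is a connected arc and $\diam$ is continuous along it, I can pick an interior point $w \in J_\ba[z_\ba^i, z_\ba^{i+1}]$ with $\diam J_\ba[z_\ba^i, w]$ roughly $\beta^n/(4\lambda)$-but-bounded-above-by-$\beta^n$, more precisely choose $w$ so that $\rho(z_\ba^i, w) \geq \beta^n/4\lambda$ while still keeping $\diam J_\ba[z_\ba^i,w] \leq \beta^n$ (possible since $\diam J_\ba[z_\ba^i,\cdot]$ takes all intermediate values and $\beta^n/4\lambda < \beta^n/2 < \diam J_\ba[z_\ba^i,z_\ba^{i+1}]$, while small subarcs have small diameter by the local quasi-arc property applied at scale $\beta^n$). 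Then $w$ is $\rho$-distance $\geq \beta^n/4\lambda$ from every net point: from $z_\ba^i$ by construction, from $z_\ba^{i+1}$ and the net points beyond it because $w \in J_\ba[z_\ba^i,z_\ba^{i+1}]$ and the local quasi-arc estimate forces points of $J_\ba$ that are $\rho$-close to $w$ to lie in a short subarc containing $w$, hence not past $z_\ba^{i+1}$; and from net points before $z_\ba^i$ similarly. This contradicts maximality of the $\beta^n/4\lambda$-separated net. Hence $\diam J_\ba[z_\ba^i, z_\ba^{i+1}] \leq \beta^n/2$.

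\textbf{Main obstacle.} The delicate point is the upper bound: one must convert ``the net is maximal $\beta^n/4\lambda$-separated'' plus ``$J_\ba$ is only a \emph{local} quasi-arc at scale $\beta^n$'' into control of the diameter of a subarc between consecutive net points, being careful that all the quasi-arc estimates are invoked at scales $\leq \beta^n$ where they are valid, and that one genuinely produces a point violating maximality (i.e.\ at $\rho$-distance $\geq \beta^n/4\lambda$ from \emph{all} existing net points, not merely the two adjacent ones). The constant $1/4\lambda$ versus $1/2$ leaves exactly the room needed for this argument, and the additional hypothesis $\beta < 1/12\lambda^2$ recorded earlier is presumably what guarantees the local quasi-arc constant $\lambda$ does not spoil these estimates when passing between the subarc-diameter and the metric $\rho$.
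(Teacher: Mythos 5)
Your overall strategy is the same as the paper's (derive a contradiction with maximality of the $\beta^n/4\lambda$-separated net by producing a point of $J_\ba[z_\ba^i,z_\ba^{i+1}]$ that is $\beta^n/4\lambda$-far from every net point, using the local quasi-arc property at scale $\beta^n$), and your lower bound and the deduction of the statement about $\rho(z_\ba^i,z_\ba^{i+1})$ are fine. However, the concrete choice of $w$ does not deliver the contradiction. If some net point $z_\ba^{k}$ satisfies $\rho(w,z_\ba^{k})\leq \beta^n/4\lambda$, the local quasi-arc property gives $\diam J_\ba[w,z_\ba^{k}]\leq \lambda\cdot\beta^n/4\lambda=\beta^n/4$, and since $z_\ba^i,z_\ba^{i+1}$ are consecutive net points this subarc contains either $J_\ba[w,z_\ba^i]$ or $J_\ba[w,z_\ba^{i+1}]$; so to rule this out you need \emph{both} $\diam J_\ba[w,z_\ba^i]>\beta^n/4$ and $\diam J_\ba[w,z_\ba^{i+1}]>\beta^n/4$. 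Your conditions guarantee neither: $\rho(z_\ba^i,w)\geq \beta^n/4\lambda$ only yields $\diam J_\ba[w,z_\ba^i]\geq\beta^n/4\lambda\leq\beta^n/4$ (off by the factor $\lambda$, so a net point on the $z_\ba^i$ side within $\beta^n/4\lambda$ of $w$ is not excluded when $\lambda>1$), and the requirement $\diam J_\ba[z_\ba^i,w]\leq\beta^n$ gives no control whatsoever on the $z_\ba^{i+1}$ side --- indeed $w$ could satisfy your constraints while lying arbitrarily close to $z_\ba^{i+1}$ itself. So the step ``$w$ is $\rho$-distance $\geq\beta^n/4\lambda$ from every net point'' is exactly the gap you flag as the main obstacle, and it is not closed by the choice you make.

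The repair is essentially the paper's argument: from $\diam J_\ba[z_\ba^i,z_\ba^{i+1}]>\beta^n/2$ take $a,b$ in this subarc with $\rho(a,b)>\beta^n/2$, and then $p\in J_\ba[a,b]$ with $\rho(p,a)>\beta^n/4$ and $\rho(p,b)>\beta^n/4$. Maximality gives a net point $z_\ba^{k'}$ with $\rho(p,z_\ba^{k'})\leq\beta^n/4\lambda$, and the subarc $J_\ba[p,z_\ba^{k'}]$ must pass through $a$ or $b$, forcing its diameter to exceed $\beta^n/4$, while the local quasi-arc condition bounds it by $\beta^n/4$ --- a contradiction. (Alternatively, your single-point scheme can be salvaged by choosing $w$ so that $\diam J_\ba[z_\ba^i,w]$ lies strictly between $\beta^n/4$ and $\diam J_\ba[z_\ba^i,z_\ba^{i+1}]-\beta^n/4$, using continuity of the one-sided diameter along the arc and subadditivity of diameter at the splitting point; the key is that both one-sided diameters must strictly exceed $\beta^n/4$, not $\beta^n/4\lambda$.) Note also that the hypothesis $\beta<1/12\lambda^2$ plays no role in this lemma; it is used later, in the lemmas controlling the projections $\pi_\ba$.
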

	\begin{proof}
		The lower bound is immediate since $\{z_\ba^i\}$ is a $\beta^n/4\lambda$-separated net.
		If the upper bound fails, there exists $\ba$, $k$ and $a,b \in J_\ba[z_\ba^k,z_\ba^{k+1}]$ so that $\rho(a,b) > \beta^n/2$.  Thus, there exists $p \in J_\ba[a,b]$ with $\rho(p, \{a,b\}) > \beta^n/4$.
		Since $\{z_\ba^i\}$ is a maximal $\beta^n/4\lambda$-separated net in $J_\ba$, there must exists $k'$ so that $\rho(p,z_\ba^{k'}) \leq \beta^n/4\lambda$.
		The arc $J_\ba[p, z_\ba^{k'}]$ passes through $p$ and either $a$ or $b$, but has diameter $\leq \beta^n/4$ by the quasi-arc condition, giving a contradiction.
	\end{proof}

	In order to control the vertical edges, we need more information about the functions $\pi_\ba$.
	\begin{lemma}\label{lem:cantorarcs2} Assuming $\beta<1/10\lambda^2$, for all non-empty $\ba \in \{0,1\}^*$ and $1 \leq i < N_\ba$,
	$|\pi_\ba(i)-\pi_\ba(i+1)| \leq 1$.  In particular, each $\pi_\ba$ is surjective.
	\end{lemma}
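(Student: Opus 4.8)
The plan is to argue by contradiction: suppose $|\pi_\ba(i) - \pi_\ba(i+1)| \geq 2$ for some $\ba=(a_1,\ldots,a_n)$ and some $i$, writing $j = \pi_\ba(i)$ and $j' = \pi_\ba(i+1)$ with $|j-j'| \geq 2$. By Lemma~\ref{lem:cantorarcs1} applied at level $n$, the points $z_\ba^i$ and $z_\ba^{i+1}$ are $\beta^n$-close (indeed $\rho(z_\ba^i,z_\ba^{i+1}) \leq \beta^n/2$), so since $J_\ba$ is a $\beta^n$-local $\lambda$-quasi-arc we get $\diam J_\ba[z_\ba^i,z_\ba^{i+1}] \leq \lambda \beta^n / 2$. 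Now the $\beta^n$-following map $p: J_\ba \to J_{\ba'}$ (with $\ba'=(a_1,\ldots,a_{n-1})$) sends $J_\ba[z_\ba^i,z_\ba^{i+1}]$ into the $\beta^n$-neighbourhood of $J_{\ba'}[p(z_\ba^i), p(z_\ba^{i+1})]$, and since that subarc of $J_{\ba'}$ is itself a $\lambda'$-quasi-arc we have good control of its diameter: the endpoints $p(z_\ba^i),p(z_\ba^{i+1})$ are within $\beta^n + \lambda\beta^n/2$ of each other, so $\diam J_{\ba'}[p(z_\ba^i),p(z_\ba^{i+1})] \preceq \lambda'\beta^n$ (taking $\beta$ small so the local quasi-arc estimate at level $n-1$ applies as well, since $\beta^n \leq \beta^{n-1}$).

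The key point is then that $p(z_\ba^i)$ lies in $J_{\ba'}[z_{\ba'}^j, z_{\ba'}^{j+1})$ while $p(z_\ba^{i+1})$ lies in $J_{\ba'}[z_{\ba'}^{j'}, z_{\ba'}^{j'+1})$, and these half-open subarcs are separated (for $|j-j'|\geq 2$) by at least one full gap $J_{\ba'}[z_{\ba'}^{m}, z_{\ba'}^{m+1}]$ with $\min(j,j') < m < \max(j,j')$. Hence the subarc $J_{\ba'}[p(z_\ba^i), p(z_\ba^{i+1})]$ contains this entire gap, so by Lemma~\ref{lem:cantorarcs1} (at level $n-1$) its diameter is at least $\beta^{n-1}/4\lambda$. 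Combining the two estimates we obtain
\[
\frac{\beta^{n-1}}{4\lambda} \;\leq\; \diam J_{\ba'}[p(z_\ba^i),p(z_\ba^{i+1})] \;\preceq\; \lambda'\beta^n,
\]
i.e.\ $\beta^{-1} \preceq \lambda\lambda'$, which fails once $\beta$ is small enough (the hypothesis $\beta < 1/10\lambda^2$, possibly after absorbing $\lambda'$ and the implicit constants into a further shrinking of $\beta$ — recall from the setup that $\beta$ can be taken arbitrarily small). This contradiction proves $|\pi_\ba(i)-\pi_\ba(i+1)| \leq 1$. Surjectivity of $\pi_\ba$ is then immediate: $\pi_\ba(1)=1$ and $\pi_\ba(N_\ba) = N_{\ba'}$ by the endpoint-to-endpoint normalization, and a map between finite ordered sets that starts at the bottom, ends at the top, and never jumps by more than $1$ must hit every value.

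The main obstacle I anticipate is bookkeeping the quasi-arc constants across two consecutive levels: one needs the local quasi-arc condition for $J_{\ba'}$ at scale $\beta^{n-1}$ to convert "endpoints close" into "subarc small", but $p(z_\ba^i), p(z_\ba^{i+1})$ are only known to be $O(\lambda\beta^n)$-close, so one should check $\lambda\beta^n \leq \beta^{n-1}$, i.e.\ $\lambda\beta \leq 1$, which holds since $\beta < 1/10\lambda^2 \leq 1/\lambda$. One must also confirm that the half-open intervals $J_{\ba'}[z_{\ba'}^j,z_{\ba'}^{j+1})$ genuinely tile $J_{\ba'}$ along the arc order so that $|j - j'| \geq 2$ really does force a full intervening gap inside $J_{\ba'}[p(z_\ba^i),p(z_\ba^{i+1})]$; this is where the fact that the net $(z_{\ba'}^m)$ is ordered by the arc order on $J_{\ba'}$ (and that $p$ maps endpoints to endpoints, so $J_{\ba'}[p(z_\ba^i),p(z_\ba^{i+1})]$ is an honest subarc) is used.
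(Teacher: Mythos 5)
Your argument is essentially the paper's proof: assume a jump of size at least $2$, trap a full net gap of $J_{\ba'}$ (of diameter $\geq \beta^{n-1}/4\lambda$ by the separation of the net) inside $J_{\ba'}[p(z_\ba^i),p(z_\ba^{i+1})]$, bound $\rho(p(z_\ba^i),p(z_\ba^{i+1}))$ by a few multiples of $\beta^n$ using the following map and Lemma~\ref{lem:cantorarcs1}, and derive a contradiction from the quasi-arc property once $\beta$ is small. The only deviations are cosmetic: the endpoint estimate should be $2\beta^n+\tfrac12\beta^n=\tfrac52\beta^n$ (not $\beta^n+\lambda\beta^n/2$), and to obtain the stated threshold $\beta<1/10\lambda^2$ you should use the $\beta^{n-1}$-local $\lambda$-quasi-arc property of $J_{\ba'}$ (applicable since $\tfrac52\beta^n<\beta^{n-1}$), as the paper does, rather than the global $\lambda'$-quasi-arc constant with a further unspecified shrinking of $\beta$.
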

	In the following it is convenient to write $\ba=(a_1,\ldots,a_n)=(\ba',a_n)$, meaning $\ba'=(a_1,\ldots,a_{n-1})$.
	\begin{proof}
		If not, there exists $\ba=(a_1,\ldots,a_n)= (\ba',a_n)$, $1 \leq i < N_\ba$ and $1 \leq j < N_{\ba'}$ so that the $\beta^n$-following map projects 
		$z_\ba^i$ to a point $p \in J_{\ba'}[ z_{\ba'}^1, z_{\ba'}^j]$ and
		$z_\ba^{i+1}$ to a point $q \in J_{\ba'}[ z_{\ba'}^{j+1}, z_{\ba'}^{N_{\ba'}}]$,
		or vice-versa.  We assume the former; in the latter case, swap the roles of $z_\ba^i$ and $z_\ba^{i+1}$.
		In particular, since $p \leq z_{\ba'}^j < z_{\ba'}^{j+1} \leq q$ in the order on $J_{\ba'}$, $\diam J_{\ba'}[p,q] \geq \beta^{n-1}/4\lambda$.

		But on the other hand, by Lemma~\ref{lem:cantorarcs1} we have 
		\[ \rho(p,q) \leq \rho(p, z_\ba^i)+ \rho(z_\ba^i,z_\ba^{i+1})+\rho(z_\ba^{i+1},q) \leq \frac{5}{2}\beta^n < \beta^{n-1}, \]
		hence
		$\diam J_{\ba'}[p,q] \leq \frac{5}{2}\lambda \beta^n$
		which gives a contradiction if $\beta < 1/10\lambda^2$.
	\end{proof}

        The final lemma proves bounds on the horizontal branching.
        
	\begin{lemma}\label{lem:cantorarcs3}
		Assume $\beta < 1/12 \lambda^2$.  Then there exists $H\geq 2$ so that for any $(a_1,\ldots,a_n)\in \{0,1\}^*$ and any $1 \leq j < N_{a_1\cdots a_{n-1}}$, if $i$ is minimal with $\pi_{a_1\cdots a_n}(i)=j$ and $i'$ is minimal with $\pi_{a_1\cdots a_n}(i')=j+1$, then $i+2 \leq i' \leq i+H$.
	\end{lemma}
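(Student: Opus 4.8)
The plan is to estimate the gap $i'-i$ in both directions using the quasi-arc geometry of the arcs $J_\ba$ and the way the following maps $\pi_\ba$ respect the net structure, in the spirit of Lemmas~\ref{lem:cantorarcs1} and \ref{lem:cantorarcs2}. Write $\ba=(a_1,\dots,a_n)=(\ba',a_n)$ and let $n\geq 1$. The points $z_\ba^i,\dots,z_\ba^{i'-1}$ are exactly the net points on $J_\ba$ whose $\beta^n$-following image lies in $J_{\ba'}[z_{\ba'}^j,z_{\ba'}^{j+1})$; call this collection the ``column over $j$''. I would prove the two bounds separately.

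For the \emph{lower} bound $i'\geq i+2$: I would show that no single net point $z_\ba^i$ can be the only point in its column, i.e.\ that the following map cannot skip from below $J_{\ba'}[z_{\ba'}^j,z_{\ba'}^{j+1})$ to above it in one step of the net on $J_\ba$. Suppose $\pi_\ba(i)<j$ or $\pi_\ba(i-1)<j$ while $\pi_\ba(i+1)>j$; combining with Lemma~\ref{lem:cantorarcs2} (consecutive values of $\pi_\ba$ differ by at most $1$) one gets $\pi_\ba(i)=j$, $\pi_\ba(i-1)=j-1$ or $=j$, and one needs to rule out $\pi_\ba(i-1)=j-1$, $\pi_\ba(i)=j$, $\pi_\ba(i+1)=j+1$ with $i$ the unique point over $j$. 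In that case the following images $p,p',q$ of $z_\ba^{i-1},z_\ba^i,z_\ba^{i+1}$ satisfy $p\in J_{\ba'}[\cdot,z_{\ba'}^j)$, $q\in J_{\ba'}[z_{\ba'}^{j+1},\cdot]$, so $\diam J_{\ba'}[p,q]\geq \beta^{n-1}/4\lambda$; but $\rho(p,q)\leq \rho(p,z_\ba^{i-1})+\rho(z_\ba^{i-1},z_\ba^{i})+\rho(z_\ba^i,z_\ba^{i+1})+\rho(z_\ba^{i+1},q)\leq \tfrac72\beta^n$ using Lemma~\ref{lem:cantorarcs1}, so $\diam J_{\ba'}[p,q]\leq \tfrac72\lambda\beta^n<\beta^{n-1}/4\lambda$ provided $\beta<1/14\lambda^2$ (hence certainly for $\beta<1/12\lambda^2$ after the same arithmetic; I will double-check the constant). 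This contradiction forces at least two net points over each $j$, i.e.\ $i'\geq i+2$. One must also handle the boundary conventions $\pi_\ba(1)=1$, $\pi_\ba(N_\ba)=N_{\ba'}$, which only help.

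For the \emph{upper} bound $i'\leq i+H$: the net points $z_\ba^i,\dots,z_\ba^{i'-1}$ all follow to $J_{\ba'}[z_{\ba'}^j,z_{\ba'}^{j+1})$, which has diameter $\leq\beta^{n-1}/2$ by Lemma~\ref{lem:cantorarcs1}; hence the subarc $J_\ba[z_\ba^i,z_\ba^{i'-1}]$ lies in the $\beta^n$-neighbourhood of an arc of diameter $\leq\beta^{n-1}/2$, so $\diam J_\ba[z_\ba^i,z_\ba^{i'-1}]\leq \beta^{n-1}/2+2\beta^n\leq \beta^{n-1}$ for $\beta$ small. On the other hand $J_\ba$ is a $\beta^n$-local $\lambda$-quasi-arc and the consecutive net points are at distance $\geq\beta^n/4\lambda$ and (by Lemma~\ref{lem:cantorarcs1}) $\leq\beta^n/2$ apart along the arc; a standard length/packing estimate for quasi-arcs shows that an arc of diameter $D$ which is a $\delta$-local $\lambda$-quasi-arc contains at most $\asymp \lambda (D/\delta)$ points of a $\delta/4\lambda$-separated net (cover $J_\ba[z_\ba^i,z_\ba^{i'-1}]$ by $O(\lambda D/\beta^{n})$ balls of radius $\beta^n/10\lambda$ using the local quasi-arc condition, and each such ball contains a bounded number of net points). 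Taking $D=\beta^{n-1}$ and $\delta=\beta^n$ gives $i'-i\leq H$ for $H:=C\lambda/\beta$ with an absolute $C$, which is a uniform constant depending only on $\beta,\lambda$ as required; note $H\geq 2$ automatically since $i'\geq i+2$.

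The main obstacle is the upper bound: making the packing estimate for the local quasi-arc rigorous requires a little care, because ``$\beta^n$-local'' only controls the arc at scales below $\beta^n$, so one cannot directly apply the quasi-arc inequality to the pair $z_\ba^i,z_\ba^{i'-1}$ (whose distance may exceed $\beta^n$). The fix is to chain along the net: consecutive net points are within $\beta^n/2<\beta^n$, so each consecutive pair satisfies $\diam J_\ba[z_\ba^\ell,z_\ba^{\ell+1}]\leq \lambda\,\rho(z_\ba^\ell,z_\ba^{\ell+1})$, and one sums these diameters against the lower bound $\rho(z_\ba^\ell,z_\ba^{\ell+1})\geq\beta^n/4\lambda$, using that the subarcs $J_\ba[z_\ba^\ell,z_\ba^{\ell+1}]$ have pairwise disjoint interiors to compare $\sum_\ell \diam J_\ba[z_\ba^\ell,z_\ba^{\ell+1}]$ with $\diam J_\ba[z_\ba^i,z_\ba^{i'-1}]\leq\beta^{n-1}$ up to a factor depending only on $\lambda$ (here the global $\lambda'$-quasi-arc property of $J_\ba$ is what lets us bound the total from the diameter). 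This yields $(i'-i)\cdot\beta^n/4\lambda \preceq \lambda\beta^{n-1}$, i.e.\ $i'-i\leq H$ with $H\asymp\lambda^2/\beta$, completing the proof.
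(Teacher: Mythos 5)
Your lower bound argument is essentially the paper's: you rule out a single point in the column over $j$ by pushing $z_\ba^{i-1}$ and $z_\ba^{i+1}$ through the following map to points $p,q$ straddling $J_{\ba'}[z_{\ba'}^j,z_{\ba'}^{j+1}]$, so $\diam J_{\ba'}[p,q]\geq \beta^{n-1}/4\lambda$, while the local quasi-arc property bounds $\diam J_{\ba'}[p,q]\leq\lambda\rho(p,q)$. One correction: the chain gives $\rho(p,q)\leq \beta^n+\tfrac12\beta^n+\tfrac12\beta^n+\beta^n=3\beta^n$, not $\tfrac72\beta^n$; with $3\beta^n$ the contradiction needs exactly $\beta<1/12\lambda^2$, matching the stated hypothesis, whereas your $\tfrac72$ would demand $\beta<1/14\lambda^2$ and the hypothesis would no longer suffice. (You should also note $\rho(p,q)<\beta^{n-1}$ so that the $\beta^{n-1}$-local quasi-arc inequality for $J_{\ba'}$ is actually applicable.)

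The upper bound, however, has a genuine gap. Both of your proposed justifications of the packing estimate — covering $J_\ba[z_\ba^i,z_\ba^{i'-1}]$ by $O(\lambda D/\beta^n)$ balls of radius $\beta^n/10\lambda$ "using the local quasi-arc condition", and bounding $\sum_\ell \diam J_\ba[z_\ba^\ell,z_\ba^{\ell+1}]$ by a constant times $\diam J_\ba[z_\ba^i,z_\ba^{i'-1}]$ via the global $\lambda'$-quasi-arc property — are false for quasi-arcs. Bounded turning controls the diameter of a subarc by the distance of its endpoints, but it gives no length or linear covering control: for a von Koch--type quasi-arc of diameter $D$, the number of $\delta$-separated net points grows like $(D/\delta)^{s}$ with $s>1$, and the sum of diameters of consecutive subarcs between net points diverges relative to $D$ as $\delta\to0$. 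So the claimed linear bound $i'-i\preceq \lambda^2/\beta$ is not justified (and is not what the quasi-arc hypotheses can give). What is true, and is all the lemma needs, is a uniform bound: the points $z_\ba^i,\ldots,z_\ba^{i'-1}$ are $\beta^n/4\lambda$-separated and lie within $\beta^n$ of $J_{\ba'}[z_{\ba'}^j,z_{\ba'}^{j+1}]$, which has diameter $\leq\beta^{n-1}/2$ by Lemma~\ref{lem:cantorarcs1}; since $\bdry G$ is doubling (this was established at the start of the proof of Theorem~\ref{thm:power-sep-hyp-no-local-cut-points}), a set of diameter $\preceq\beta^{n-1}$ contains at most a uniform number (depending on the doubling constant, $\lambda$ and $\beta$) of such separated points. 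This is exactly how the paper concludes $i'\leq i+H$; replacing your packing argument by this appeal to doubling repairs the proof, at the cost that $H$ depends on the doubling constant rather than being linear in $\lambda^2/\beta$.
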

	\begin{proof}
		By Lemma~\ref{lem:cantorarcs2}, the only way ``$i+2 \leq i'$'' can fail is if for some $\ba=(a_1,\ldots, a_n)=(\ba',a_n)$ and $1 \leq j < N_{\ba'}$, if $i$ is minimal with $\pi_\ba(i)=j$, then $\pi_\ba(i+1)=j+1$.
		But then by minimality $\pi_\ba(i-1) < j$ (or $i=j=1$) and so the following map sends $z_\ba^{i-1}, z_\ba^{i+1}$ (or $z_\ba^1, z_\ba^2$) to points $p,q$ which satisfy  
		$\diam J_{\ba'}[p,q] \geq \beta^{n-1}/4\lambda$.
		But on the other hand 
		\[ \diam J_{\ba'}[p,q] \leq \lambda \rho(p,q) 
		\leq \lambda \bigg(\beta^n + \frac{1}{2}\beta^n + \frac{1}{2}\beta^n+\beta^n\bigg) = 3\lambda \beta^n \]
		by the quasi-arc property, a contradiction for $\beta < 1/12 \lambda^2$.

		We now show under the hypotheses of the Lemma that $i' \leq i+H$ for a uniform $H$.
		Note that $\pi_\ba(i'-1)=j$ by minimality and Lemma~\ref{lem:cantorarcs2}.
		Now by the $\beta^n$-following map, every point $z_\ba^i, z_\ba^{i+1},\ldots, z_\ba^{i'-1}$ is $\beta^n$ close to the arc $J_{\ba'}[z_{\ba'}^j, z_{\ba'}^{j+1}]$, which has diameter $\leq \beta^{n-1}/2$ by Lemma~\ref{lem:cantorarcs1}.
		Since the points $\{z_\ba^k : i \leq k < i'\}$ are $\beta^n/4\lambda$ separated, the doubling property for $\bdry G$ gives a uniform bound on the cardinality $i'-i$ of this set.
	\end{proof}

		By Lemma~\ref{lem:cantorarcs3} and construction, $A$ satisfies the definition of bounded irregular round tree with constant vertical branching $2$ and horizontal branching in $[2,H]$.  It remains to observe that $A$ with its graph metric is quasi-isometric to the subset $W \subset G$.
	
		\begin{proof}[Proof of Proposition \ref{prop:QI-round-tree-graph}]
		Recall $\{x_\ba^i : \ba \in \{0,1\}^*, 1 \leq i \leq N_\ba\}$ is coarsely dense in $W$ by Lemma \ref{lem:cantorarcs0}.
		The function $\Pi$ is a bijection onto its image, so it suffices to show that $\Pi$ and $\Pi^{-1}$ (restricted to its image) are coarsely Lipschitz.

		To show $\Pi$ is coarsely Lipschitz, since $A$ is a graph it suffices to bound the images of endpoints of edges.
		For a horizontal edge between $p_\ba^i$ and $p_\ba^{i+1}$ for some $\ba \in \{0,1\}^*, 1 \leq i < N_\ba$, by Lemma~\ref{lem:cantorarcs1} we have $\rho(z_\ba^i, z_\ba^{i+1}) \leq \beta^n/2$, hence the Gromov product $(z_\ba^i | z_\ba^{i+1})_1 \geq \frac{-1}{\epsilon}\log(\beta^n)-C$ for some $C$.  As each $x^j_\ba$ is a point $-\frac1\varepsilon\log(\beta^n)+C_1$ from $1$ along a geodesic to $z^j_\ba$, we deduce that $d(x_\ba^i, x_\ba^{i+1})$ is uniformly bounded by hyperbolicity.
		For a vertical edge between $p_\ba^k$ and $p_{\ba'}^j$, with $\ba=(\ba',a_n) \in \{0,1\}^n$, $1 \leq i\leq k \leq i'-1 \leq N_\ba$ and $i,i'$ minimal with $\pi_\ba(i)=j$ and $\pi_\ba(i')=j+1$ respectively, we have by Lemma~\ref{lem:cantorarcs3} (and proof) that $\rho(z_\ba^k, z_{\ba'}^j) \leq \beta^{n-1}$, so again the Gromov product satisfies $(z_\ba^k| z_{\ba'}^j)_1 \geq \frac{-1}{\epsilon}\log(\beta^n)-C'$, and $d(x_\ba^k, x_{\ba'}^j)$ is uniformly bounded by hyperbolicity.

		To show $\Pi^{-1}$ is coarsely Lipschitz, since $W$ is quasi-convex in $G$, $W$ is coarsely geodesic, and so it suffices to show that for any $M \geq 0$ there exists $M' < \infty$ so that $d(x_\ba^i, x_{\bbb}^j) \leq M$ implies $d_A(p_{\ba}^i,p_{\bbb}^j) \leq M'$.
		Assume we have such $x_\ba^i, x_\bbb^j$, with $\ba \in \{0,1\}^m, \bbb \in \{0,1\}^n$.
		Then $|n-m|$ is uniformly bounded depending on $M$; note that $\rho(z_\ba^i, z_\bbb^j) \preceq \beta^{m} \asymp \beta^{n}$.
		By Lemma~\ref{lem:cantorarcs0}, there exist $\hat\ba = (\ba, a_{m+1},\ldots) , \hat\bbb = (\bbb, b_{n+1},\ldots)\in \{0,1\}^\N$ with $\rho(z_\ba^i, J_{\hat\ba}) \preceq \beta^{m}$ and $\rho(z_\bbb^j, J_{\hat\bbb}) \preceq \beta^{n}$.
		Thus the minimum distance $\rho(J_{\hat\ba}, J_{\hat\bbb}) \preceq \beta^m$,
		and so if $k = \min\{l:a_l\neq b_l\}$, we have $\beta^k \preceq \beta^m$, which implies that $k \geq m-C$ for some constant $C$.  Hence for $\bc=(a_1,\ldots,a_k)=(b_1,\ldots,b_k)$, there exists $i', j'$ so that $d_A(p_\ba^i,p_\bc^{i'}), d_A(p_\bbb^j,p_\bc^{j'}) \leq C$ and $\rho(z_\ba^i,z_\bc^{i'}), \rho(z_\bbb^j, z_\bc^{j'}) \leq 2 \beta^{k+1}$.  Then $\rho(z_\bc^{i'}, z_\bc^{j'}) \preceq 4\beta^{k+1}+\beta^m \preceq \beta^k$.
		Using that $J_\bc$ is a $\lambda'$-quasi-arc, the chain of $\beta^k/4\lambda$-separated points $\{z_\bc^{k}: k \in [i',j']\} \subset B(z_\bc^{i'},C\lambda'\beta^k)$, and so $|j'-i'|$ is uniformly bounded by doubling.
		Hence $d_A(p_\bc^{i'},p_\bc^{j'})$ is uniformly bounded, and we are done.
	\end{proof}

	Having shown $W$ is quasi-isometric to a round tree graph with vertical branching $2$ and horizontal branching in $[2,H]$, the proposition follows from Proposition~\ref{prop:roundtree-lower}.
\end{proof}

\section{Groups with logarithmic separation}\label{sec:logupper}
\label{sec:all-QH-vertex-groups}

We will prove Theorem~\ref{thm:logsep} using the separation profile of Benjamini-Schramm-Tim\'ar~\cite{BenSchTim-12-separation-graphs}. We recall the definition and key properties for convenience.

\begin{definition}\label{defn:sep}
    Let $\Gamma$ be an $r$-vertex graph and let $\varepsilon\in(0,1)$. The $\varepsilon$-cut size of $\Gamma$, $\cut^\varepsilon(\Gamma)$ is the minimum cardinality of a subset $S\subseteq V(\Gamma)$ so that every connected component of $\Gamma-S$ has at most $\varepsilon r$ vertices.

    The $\varepsilon$ separation profile of a graph $X$ is the function
    \[
		\sep^\varepsilon_X(r) := \max\setcon{\cut^\varepsilon(\Gamma)}{\Gamma\leq X,\ |\Gamma|\leq r}.
    \]
\end{definition}

\begin{proposition}[\cite{BenSchTim-12-separation-graphs,HumeMackTess-Pprof}]
    Let $X$ be a bounded degree graph. Then for any $\varepsilon \in (0, 1)$,
    \[
        \Lambda^1_X(r) \simeq \sep^\varepsilon_X(r).
    \]
\end{proposition}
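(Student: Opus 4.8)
The plan is to prove both inequalities $\Lambda^1_X(r)\lesssim\sep^{1/2}_X(r)$ and $\sep^{1/2}_X(r)\lesssim\Lambda^1_X(r)$; this suffices, since the profiles $\sep^\varepsilon_X$ are pairwise equivalent as $\varepsilon$ ranges over $(0,1)$ --- passing from a $\tfrac12$-separator structure on a finite subgraph to an $\varepsilon$-separator structure (or vice versa) requires only a bounded number, depending on $\varepsilon$, of further balanced cuts of subgraphs, so the separator size changes by at most a constant factor. Write $d$ for the maximal degree of $X$.

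For $\Lambda^1_X\lesssim\sep^{1/2}_X$ I would show $|\Gamma|\,h^1(\Gamma)\le C(d)\,\cut^{1/2}(\Gamma)$ for every finite $\Gamma\le X$. Fix an optimal separator $S$, so the components of $\Gamma-S$ all have size $\le|\Gamma|/2$. If $|S|\ge|\Gamma|/4$ we are done by the trivial bound $h^1(\Gamma)\le d$. Otherwise group the components greedily, always adding the next one to the currently smaller of two sets $A,B$; since each component has size $\le|\Gamma|/2$ the final imbalance is $\le|\Gamma|/2$, so $|A|,|B|\asymp|\Gamma|$. Testing $h^1(\Gamma)$ against the mean-zero function $f=\mathbf{1}_A-\tfrac{|A|}{|B|}\mathbf{1}_B$, only edges incident to $S$ contribute to $\|\nabla f\|_1$, so $\|\nabla f\|_1\preceq|S|$ (with constant depending on $d$) while $\|f\|_1=2|A|\asymp|\Gamma|$, giving the bound. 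Taking suprema over $\Gamma\le X$ with $|\Gamma|\le r$ yields $\Lambda^1_X(r)\lesssim\sep^{1/2}_X(r)$.

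The reverse inequality is the substantive direction, and I would prove its contrapositive: if every subgraph $\Gamma'\le\Gamma$ satisfies $|\Gamma'|\,h^1(\Gamma')<\eta$, then $\cut^{1/2}(\Gamma)<4\eta$. The engine is a sparse, roughly balanced cut extracted from an almost-optimal Poincar\'e function. Given a connected graph $\Gamma'$ on $m$ vertices and a mean-zero $g\not\equiv0$ with $\|\nabla g\|_1<\tfrac{\eta}{m}\|g\|_1$, order the vertices $v_1,\dots,v_m$ so that $g(v_1)\le\cdots\le g(v_m)$, put $t_k=g(v_{k+1})-g(v_k)\ge0$ and $A_k=\{v_1,\dots,v_k\}$. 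A coarea identity gives $\|\nabla g\|_1=\sum_{k=1}^{m-1}t_k\,e(A_k)$, where $e(A_k)$ is the number of edges from $A_k$ to its complement, and expanding $g(v_i)=\tfrac1m\sum_j\bigl(g(v_i)-g(v_j)\bigr)$ gives $\|g\|_1\le\tfrac2m\sum_k t_k\,k(m-k)\le2\sum_k t_k\min(k,m-k)$. Combining the two, there is some $k$ with $t_k>0$ and $e(A_k)<\tfrac{2\eta}{m}\min(k,m-k)$; letting $\ell=\min(k,m-k)\le m/2$ and taking $S'$ to be the vertex boundary inside $\Gamma'$ of the smaller of $A_k,A_k^c$, we obtain $|S'|\le e(A_k)<\tfrac{2\eta}{m}\,\ell$, and deleting $S'$ isolates those $\ell$ vertices in components of size $\le\ell\le m/2$.

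Finally I would run this recursively. Apply the cut lemma to $\Gamma_0=\Gamma$ of size $m_0=|\Gamma|$, discard the small isolated side (all of whose components have size $\le m_0/2$, hence are finished and will not be touched later), and pass to $\Gamma_1$, the unique component of $\Gamma_0-S_1$ of size $m_1>|\Gamma|/2$ --- there is at most one, since two disjoint sets cannot both exceed half of $|\Gamma|$; if there is none, stop. Iterating yields a strictly decreasing chain $|\Gamma|=m_0>m_1>\cdots>m_J$ with $m_J\le|\Gamma|/2$, step costs $|S_j|<\tfrac{2\eta}{m_{j-1}}\ell_j\le\tfrac{4\eta}{|\Gamma|}\ell_j$, and $\sum_j\ell_j\le m_0-m_J<|\Gamma|$, so $\bigcup_jS_j$ is a $\tfrac12$-separator of $\Gamma$ of size $<4\eta$. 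Letting $\eta$ decrease to $\sup_{\Gamma'\le\Gamma}|\Gamma'|\,h^1(\Gamma')\le\Lambda^1_X(|\Gamma|)$ gives $\cut^{1/2}(\Gamma)\lesssim\Lambda^1_X(|\Gamma|)$, and taking suprema over $|\Gamma|\le r$ finishes the proof. I expect the main obstacle to be exactly the cut constructed in the third paragraph: a single sparse cut of $g$ need not be balanced at all --- level sets can be minuscule, or constant on almost all of $\Gamma'$ --- so the naive recursion's piece count explodes; the $\min(k,m-k)$ weighting in the isoperimetric inequality, together with the observation that at each stage only one surviving piece can exceed half of $|\Gamma|$, is precisely what forces the total cost to telescope.
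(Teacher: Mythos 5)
Your proof is correct: the trivial-bound/greedy-bipartition test function gives $|\Gamma|h^1(\Gamma)\preceq\cut^{1/2}(\Gamma)$, the coarea identity plus the mean-zero expansion yields a sparse cut weighted by $\min(k,m-k)$, the "at most one component exceeds $|\Gamma|/2$" recursion telescopes to a balanced separator of size $<4\eta$, and the $\varepsilon$-robustness claim is fine since at every stage only boundedly many components exceed $\varepsilon r$ and hence ever need re-cutting. The paper does not prove this proposition but quotes it from Benjamini--Schramm--Tim\'ar and Hume--Mackay--Tessera, and your argument is essentially the same standard route taken there (identifying $h^1$ with Cheeger-type sparse cuts via coarea and iterating to obtain balanced separators), so there is nothing genuinely different to flag.
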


Let us recall the statement of Theorem \ref{thm:logsep}.  In this section we will denote $\sep^{\frac23}$ by $\sep$ and $\cut^{\frac23}$ by $\cut$ (cf.\ Definition \ref{defn:sep}).

\begin{varthm}[Theorem~\ref{thm:logsep}.]
	Let $G$ be a one-ended hyperbolic group which has a JSJ decomposition over $2$-ended subgroups consisting only of quadratically hanging and cylindrical vertex groups.
	Then $\sep_{G}(r) \simeq \log(r)$.
\end{varthm}

By Shepherd--Woodhouse~\cite[Theorem 1.2; see Remark 3.6, Theorem 3.9, Lemma 3.12]{ShepWood-22-QI-rigid-vfree-amalgams}, we can find a finite index subgroup which has a decomposition as a graph of groups with all vertex groups fundamental groups of either (1) orientable hyperbolic surfaces or (2) of circles, and all edge groups infinite cyclic which identify a vertex group of type (2) with a subgroup of a vertex group of type (1) corresponding to a boundary circle.  
This reduces Theorem~\ref{thm:logsep} to the following more geometric theorem, which we prove in this section.

\begin{theorem}\label{thm:logsep-geom}
Let $\cC$ be a finite collection of compact hyperbolic surfaces with totally geodesic boundary.
Let $X$ be a connected space obtained by gluing together the surfaces in $\cC$, where each gluing identifies some set of $\geq 2$ isometric boundary curves of surfaces in $\cC$.
	Then $G= \pi_1(X)$ satisfies $\sep_{G}(r)\lesssim \log(r)$.
	In particular, if $G$ is one-ended, $\sep_G(r) \simeq \log(r)$.
\end{theorem}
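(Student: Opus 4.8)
The plan is to establish the upper bound $\sep_G(r) \lesssim \log(r)$ for $G = \pi_1(X)$, where $X$ is built from finitely many compact hyperbolic surfaces-with-boundary glued along isometric boundary curves; the matching lower bound $\sep_G(r) \gtrsim \log(r)$ follows from the fact that a one-ended hyperbolic group contains a quasi-isometrically embedded $\HH^2$ (or simply a quasi-convex surface subgroup), which has $\sep \simeq \log$, together with monotonicity of the separation profile under regular (in particular, quasi-isometric) embeddings. So the heart of the matter is the upper bound.

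First I would pass to the universal cover $\widetilde{X}$, which is a $\CAT(-1)$ (or at least Gromov hyperbolic) space built as a tree of spaces: the underlying Bass--Serre tree $T$ has vertices labelled by copies of universal covers of the surfaces in $\cC$ (each quasi-isometric to $\HH^2$) and edges labelled by copies of lines (the lifts of the gluing curves, which are totally geodesic, hence quasi-convex and in fact isometrically embedded geodesics separating each surface piece). The key structural input is that each edge space is a \emph{single geodesic line}, and that within each vertex space the collection of edge lines emanating from it is a disjoint family of totally geodesic lines, so that cutting along finitely many of them disconnects a bounded-radius ball into controlled pieces. I would then take an arbitrary finite connected subgraph $\Gamma \leq G$ (equivalently a finite subset of $\widetilde{X}$, coarsely) with $|\Gamma| \leq r$, and construct a cut set $S$ of size $O(\log r)$ whose removal leaves components of size $\leq \frac23 r$.

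The construction of $S$ is the main step and the main obstacle. The idea is a two-scale argument. Since each vertex space $\widetilde{S_i}$ is quasi-isometric to $\HH^2$, it has logarithmic separation; one should realise a ``bisecting'' cut of a subset of a single vertex space by a set of size $O(\log r)$ consisting of the trace of finitely many geodesics (or, exploiting the hyperbolic plane's structure, a short arc of a horocycle/geodesic). One then needs to combine these per-vertex cuts across the tree $T$: using that $T$ is a tree, one finds a single edge (or vertex) of $T$ whose removal splits the mass of $\Gamma$ into pieces each of size $\leq \frac23 r$ — this is the classical centroid/separator argument for trees and uses \emph{no} vertices of $\widetilde X$ at all if one edge suffices. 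The subtlety is that one edge of $T$ corresponds to removing an entire geodesic line, which is infinite, so one must intersect it with a bounded neighbourhood of $\Gamma$; since $\Gamma$ is connected with $\leq r$ vertices it has diameter $\leq r$, so the relevant portion of the line has $\leq O(r)$ vertices, which is far too many. The fix is to observe that $\Gamma$ being connected and hyperbolic forces it to be ``thin'': its intersection with a tubular neighbourhood of any single geodesic line, after further cutting within the two adjacent hyperbolic-plane pieces, can itself be separated logarithmically. Concretely, I would argue by a nested induction on $r$: remove $O(\log r)$ vertices to (a) cut $\Gamma$ within the ``central'' vertex space of $T$ and along the bounded-length relevant portions of the finitely many edge lines meeting it that carry $\geq$ a definite fraction of the mass, reducing to subgraphs supported on strictly smaller subtrees of $T$ or strictly smaller portions of a single $\HH^2$, each of size $\leq \frac23 r$; then (b) recurse. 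Because at each step the problem size drops by a constant factor, the recursion has depth $O(\log r)$ and contributes $O(\log r)$ vertices total.

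The technical heart I expect to grind on is quantifying step (a): showing that cutting a finite connected subset $\Gamma \subset \HH^2$ of size $r$, together with finitely many marked disjoint totally geodesic lines through it, into pieces of relative size $\leq \frac23$ each lying on ``one side'' of the cut, costs only $O(\log r)$ vertices — uniformly in the (bounded) number of lines. For $\HH^2$ alone this is the known logarithmic separation estimate (realised by a geodesic segment, since a ball of radius $R$ in $\HH^2$ is bisected by a diameter of length $2R$ and $r \asymp e^{R}$ forces $R \asymp \log r$); incorporating the finitely many edge-lines requires choosing the bisecting geodesic transverse to all of them and controlling the crossing pattern, which is where hyperbolicity (thin triangles, so two disjoint geodesics diverge exponentially) keeps the count logarithmic. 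Once this local estimate is in hand, the tree-of-spaces bookkeeping assembling it into a global cut of $\widetilde X$ is routine, and Proposition~\cite[]{} on $\sep \simeq \Lambda^1$ together with quasi-isometry invariance transfers the bound from $\widetilde X$ to $G = \pi_1(X)$, completing the proof of Theorem~\ref{thm:logsep-geom} and hence Theorem~\ref{thm:logsep}.
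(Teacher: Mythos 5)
Your overall frame (upper bound is the content; lower bound from a quasi-convex Fuchsian/one-ended piece) matches the paper, but the core of your upper-bound argument has a genuine gap, and it sits exactly where the paper has to do something non-obvious. Your step (a) asks for the following local estimate: given a connected $r$-point subset of a vertex space (a copy of a convex subset of $\HH^2$) together with the finitely many adjacent gluing geodesics, produce a cut of size $O(\log r)$ that separates it compatibly with the tree-of-spaces structure. The justification you sketch (``a ball of radius $R$ in $\HH^2$ is bisected by a diameter of length $2R$, and $r\asymp e^R$'') only treats sets of diameter $O(\log r)$; a general connected subset of size $r$ can have diameter $\asymp r$ and can cluster along a gluing geodesic, so a bisecting geodesic (or the relevant portion of an edge line, as you yourself note) can meet a bounded neighbourhood of the set in up to linearly many points. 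Your proposed fix --- ``thinness'' plus a nested recursion --- is not substantiated, and as stated it is also quantitatively inconsistent: removing $O(\log r)$ vertices at each of $O(\log r)$ recursion levels gives $O(\log^2 r)$, not $O(\log r)$; moreover for the $\tfrac23$-cut one needs a single cut set whose complementary components are \emph{globally} small in $\widetilde X$, and components of a cut made inside one vertex space can reconnect through neighbouring sheets across the gluing lines, so the ``routine bookkeeping'' you defer is precisely the difficulty.

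The paper resolves this by a different mechanism that never tries to cut through the bulk of $Z$. It picks a centrepoint $z$ via a Helly-type argument in the $\CAT(0)$ space (Kleiner), so that any convex set missing $z$ contains at most $\tfrac23|Z|$; it then builds a cut set $C$ which is a bounded tree of geodesic segments and rays based at $z$ and at a nearby point of an attaching geodesic. The key point (Lemma~\ref{lem:expmany}) is that each complementary component of the lifted attaching curves is quasi-isometric to a $3$-regular tree, so inside a fixed wedge there are $\gtrsim e^{\lambda R}$ candidate rays whose $3$-neighbourhoods pairwise overlap only in $B(z,R)$; taking $R\asymp\log r$ and pigeonholing against the $r$ points of $Z$ yields rays whose $3$-neighbourhoods meet $Z$ only inside a ball of radius $\asymp\log r$, so $|N_3(C)\cap Z|\lesssim\log r$ automatically. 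Convexity of the components of $U\setminus C$ (checked via the wedge angles, after deforming the metric so attaching curves are uniformly far apart) combined with the centrepoint property then gives the $\tfrac23$ bound. If you want to salvage your approach you would need to prove your local ``vertex space plus marked lines'' separation lemma with a cut that genuinely disconnects $\widetilde X$ (not just the vertex space) and costs $O(\log r)$; the avoidance-by-abundance idea above is the missing ingredient that makes this possible.
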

Note that such $X$ is $\CAT(-1)$.

The idea behind the proof is as follows: we work in a standard graph approximation $N=(V(N),E(N))$ to the universal cover $U:=\widetilde{X}$, where $V(N)$ is a maximal $1$-separated subset of $U$ and
\[
    E(N) = \setcon{nn'}{d_U(n,n')\leq 3}.
\]
It is a standard result that $N$ (with its shortest path metric) is quasi-isometric to $U$ (which is itself quasi-isometric to $G$) so it suffices to prove that $\sep_N(r) \simeq \log(r)$.

Fix a subset $Z\subset V(N)$ of cardinality $r$. The strategy is as follows. We may disconnect the space $U$ into convex connected components by removing a tree $C$ rooted at a ``centrepoint'' $z$ of $Z$ built from a uniformly bounded number of geodesics. As each connected component has non-abelian free fundamental group there are enough ways of doing this that most of them will intersect the $3$-neighbourhood of $Z$ only inside the ball of radius $\simeq\log(r)$ centred at $z$. Choosing such a $C$, we obtain a cutset $Z\cap N_3(C)$ for $Z$ of size $\lesssim \log(r)$. A good choice of centrepoint ensures that this cutset is a $\frac23$-cutset. We will break the proof of Theorem \ref{thm:logsep-geom} into several steps following this rough outline.

\subsection*{Step 1: finding rays that avoid lifts of attaching curves}

\begin{definition} Let $U$ be a $\CAT(0)$ space, let $o\in U$ and let $\gamma$ be a geodesic of positive length starting at $o$. Given $\theta\in[0,\pi/2)$, define the \textbf{$\theta$-wedge centred at $o$ with direction $\gamma$} to be the set of all points $z\neq o$ such that
\[
 \angle_o(\gamma,[o,z]) < \theta,
\]
where $[o,z]$ is the unique geodesic from $o$ to $z$ in $X$.
\end{definition}

\begin{definition} Let $U$ be a $\CAT(0)$ space, and $A \subset U$. Given $z\in U$, define $\mathcal R^A_z$ to be the set of all geodesic rays $\gamma$ with $\gamma(0)=z$ such that $\gamma\setminus\{z\}$ is disjoint from $A$. Define $R^A_z$ to be the set of all points lying on geodesic rays in $\mathcal R^A_z$.
\end{definition}

We use the following simple fact from hyperbolic geometry.
\begin{lemma}
	\label{lem:hyp-visual-angle}
	There exists $C_v >0$ so that for any point $o \in \HH^2$ and bi-infinite geodesic $\gamma \subset \HH^2$ with $d(o,\gamma) \geq C_v$, then the visual angle at $o$ between points in $\gamma$ is at most $\frac{\pi}{20}$.
\end{lemma}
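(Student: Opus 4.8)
This is an elementary statement about $\HH^2$ whose content is: a geodesic that passes far from the observation point subtends a small visual angle. The plan is to reduce to the right-angled triangle picture. Let $o \in \HH^2$, let $\gamma$ be a bi-infinite geodesic with $d(o,\gamma) = d \geq C_v$, and let $q$ be the nearest-point projection of $o$ onto $\gamma$, so $[o,q] \perp \gamma$ at $q$. For any point $x \in \gamma$ the geodesic ray from $q$ towards $x$ makes a right angle with $[o,q]$, so the triangle $\triangle(o,q,x)$ has a right angle at $q$. By the hyperbolic law of sines (or directly from the formula for a right triangle), the angle $\angle_o(q,x)$ satisfies $\sin \angle_o(q,x) \leq \tanh(d(o,x))^{-1}\cdot \text{(something small)}$; more cleanly, for a right triangle with the right angle at $q$, legs $d = d(o,q)$ and $b = d(q,x)$, and angle $\alpha = \angle_o(q,x)$ opposite the leg $b$, one has the identity $\tan\alpha = \frac{\tanh b}{\sinh d}$. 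Since $\tanh b < 1$ this gives $\tan \angle_o(q,x) < \frac{1}{\sinh d}$ for \emph{every} $x \in \gamma$, uniformly.

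Now for two points $x, x' \in \gamma$ the visual angle $\angle_o(x,x')$ is at most $\angle_o(q,x) + \angle_o(q,x')$ (triangle inequality for angles at $o$, or split at whichever side of $q$ the points lie, noting $q$ may lie between them or not; in all cases the total visual angle spanned by $\gamma$ is at most $2\arctan\frac{1}{\sinh d}$). Hence it suffices to choose $C_v$ so large that $2\arctan\frac{1}{\sinh C_v} \leq \frac{\pi}{20}$, i.e.\ $\sinh C_v \geq \cot\frac{\pi}{40}$; any such $C_v$ works since $\arctan\frac{1}{\sinh d}$ is decreasing in $d$.

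The only step requiring any care is the trigonometric identity $\tan\alpha = \frac{\tanh b}{\sinh d}$ for a hyperbolic right triangle; this is standard (e.g.\ it follows from the hyperbolic Pythagorean theorem $\cosh c = \cosh d \cosh b$ together with $\sinh b = \sinh c \sin\alpha$ and $\cosh d = \cosh b / \cosh c \cdot \ldots$; cleanest is just to quote it). I do not expect a genuine obstacle here — the lemma is a soft geometric fact, and the proof is a one-paragraph computation. If one prefers to avoid explicit hyperbolic trigonometry entirely, an alternative is to work in the upper half-plane model, move $o$ to $i$ by an isometry, observe that the Euclidean-diameter-$\le 2$ constraint forces $\gamma$ (a Euclidean semicircle or vertical line) at hyperbolic distance $\ge C_v$ from $i$ to be contained in a small Euclidean neighbourhood of the boundary, and then note that the visual angle at $i$ subtended by such a set tends to $0$ as $C_v \to \infty$ by a direct estimate; either route gives the uniform constant $C_v$.
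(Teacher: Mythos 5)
Your proof is correct. The paper states this lemma without proof (it is invoked as a ``simple fact from hyperbolic geometry''), and your argument is the standard way to verify it: project $o$ to the foot point $q$ on $\gamma$, use the right-triangle identity $\tan\alpha=\tanh b/\sinh d$ to get $\angle_o(q,x)\leq\arctan(1/\sinh d)$ uniformly in $x\in\gamma$, and conclude by the triangle inequality for angles at $o$ that the visual angle is at most $2\arctan(1/\sinh C_v)$, which is $\leq\pi/20$ once $\sinh C_v\geq\cot(\pi/40)$. The only blemish is the vague intermediate line ``$\sin\angle_o(q,x)\leq\tanh(d(o,x))^{-1}\cdot(\text{something small})$'', but it is immediately superseded by the correct identity, so nothing is missing.
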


We now consider the space $U = \widetilde{X}$ of Theorem~\ref{thm:logsep-geom}, and the subset $A \subset U$ of lifts of attaching closed curves. As $A$ is now fixed, we will write $\mathcal R_z$ for $\mathcal R_z^A$ and $R_z$ for $R_z^A$.

We can and do deform the hyperbolic surfaces in $X$ by shrinking the attaching circles, so that the distances between attaching curves are all $\geq 4C_v$.

\begin{lemma}\label{lem:expmany} There exists $\lambda>0$ with the following property.  For every $\frac{\pi}{8}$-wedge $W$ in $U$, denoting the centre of $W$ by $z$ and its direction by $\gamma_W$, if  $d_U(z,A\cap W)\geq 2C_v$ then for every $R$ there
exists $A_R\subset\mathcal R_z$ so that:
\begin{itemize}
    \item for each $\gamma\in A_R$, $\gamma\subset W$,
    \item $|A_R|\geq \lambda\exp(\lambda R)$
    \item for each distinct pair $\gamma,\gamma'\in A_R$,
    \[
     N_3(\gamma)\cap N_3(\gamma') \subseteq B(z;R).
    \]
\end{itemize}
\end{lemma}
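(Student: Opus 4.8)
The plan is to exploit the non-abelian free fundamental groups of the surface pieces to produce exponentially many geodesic rays that stay inside the wedge $W$ and diverge from each other before leaving a ball of controlled radius. First I would set up the picture near the centre $z$: since $d_U(z, A\cap W)\geq 2C_v$, there is a ball $B(z;C_v)$ (or a slightly larger metric neighbourhood) that meets no lift of an attaching curve, so $z$ lies in the interior of a single lifted surface piece $\widetilde S \subset U$, which is an isometrically embedded copy of a convex hyperbolic polygon-with-boundary; its fundamental domain under the surface group action has non-abelian free $\pi_1$. The key local observation is that there is a definite ``cone of directions'' at $z$, of angular width at least, say, $\frac{\pi}{8}$ around $\gamma_W$, within which geodesic rays can travel a uniformly bounded distance $\asymp C_v$ before they either stay in $\widetilde S$ or cross into a neighbouring piece through a boundary arc that is itself far (distance $\geq 4C_v$, by our deformation) from all other lifts of attaching curves. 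Each time a ray crosses such an arc, Lemma~\ref{lem:hyp-visual-angle} guarantees the boundary arc subtends visual angle $\leq \frac{\pi}{20}$ at the ray's current basepoint, so the ray has a definite angular freedom (at least a fixed fraction of $\pi$, after subtracting $O(\frac{\pi}{20})$) in how it continues into the next piece while still avoiding that piece's lifted attaching curves.

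Next I would run a branching argument along a depth-$\asymp R$ rooted tree. Starting from $z$, at each stage the ray has travelled a bounded distance (bounded in terms of $C_v$ and the diameters of the finitely many surfaces in $\cC$), it sits at a point whose distance to the nearest not-yet-crossed lift of $A$ is $\geq 2C_v$, and it has at least two geometrically distinct admissible continuations: because $\pi_1$ of each piece is non-abelian free, the developing map gives at least two boundary arcs (equivalently, two ``sides'' of the current piece) that one can exit through while keeping the wedge/avoidance conditions, and these choices are separated in direction by a definite angle $\theta_0>0$ (again via Lemma~\ref{lem:hyp-visual-angle} plus $\CAT(-1)$ comparison). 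Iterating $n$ times produces $\geq 2^n$ rays, each of which is a concatenation of geodesic segments that, by $\delta$-hyperbolicity of $U$ (it is $\CAT(-1)$), is a uniform quasigeodesic, hence lies within bounded Hausdorff distance of an honest geodesic ray $\gamma\in\mathcal R_z$ contained in $W$ (the wedge condition is preserved since all continuations stay within angle $\frac{\pi}{8}$ of $\gamma_W$ at $z$, using that the total angular drift is controlled and $\CAT(-1)$ geodesics don't turn back). Two rays coming from distinct leaves of the tree first differ at some branch point at combinatorial depth $\leq n$, hence at $U$-distance $\leq c n$ from $z$ for a uniform $c$; after that branch point, the two concatenations diverge at a definite angle $\theta_0$, so by $\CAT(-1)$ geometry their $3$-neighbourhoods separate within bounded distance of the branch point, giving $N_3(\gamma)\cap N_3(\gamma')\subset B(z; c n + c')$. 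Choosing $n = \lfloor (R-c')/c \rfloor$ and $\lambda$ small enough depending only on $c, c'$ and $\log 2$ yields $|A_R|\geq 2^n \geq \lambda\exp(\lambda R)$ with the stated properties; passing to the geodesic representatives and discarding duplicates changes the count by at most a bounded factor, absorbed into $\lambda$.

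The main obstacle is the bookkeeping in the branching step: one must verify that the ``two admissible continuations, separated by a definite angle, each again $\geq 2C_v$ from the relevant lifts of $A$'' property genuinely persists at every stage, uniformly, rather than degrading as rays wander through many pieces. This requires (a) using the deformation that makes attaching curves pairwise $\geq 4C_v$ apart so that entering a new piece one is automatically $\geq 2C_v$ from that piece's other boundary arcs, (b) a compactness/finiteness input — there are only finitely many isometry types of pieces and finitely many ways they are glued — to get a single $\theta_0>0$ and a single per-step distance bound $c$ working everywhere, and (c) checking that the concatenated paths remain quasigeodesics with uniform constants, which follows from the angle lower bound $\theta_0$ together with a standard ``local-to-global'' argument in $\CAT(-1)$ spaces (no two consecutive segments nearly backtrack). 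The wedge containment is the mildest point: it only needs to be checked at the apex $z$, where all continuations are by construction within angle $\frac{\pi}{8}$ of $\gamma_W$, and then $\CAT(-1)$ convexity keeps the whole ray on the correct side.
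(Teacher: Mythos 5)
Your construction has a genuine flaw: the rays you build are required to land in $\mathcal R_z=\mathcal R_z^A$, whose defining property is that $\gamma\setminus\{z\}$ is \emph{disjoint} from $A$, the union of all lifts of attaching curves. But your branching step explicitly sends rays ``into a neighbouring piece through a boundary arc'' --- that boundary arc is a component of $A$, so every ray produced after the first crossing fails the membership condition. Straightening the concatenated quasigeodesics to honest geodesic rays cannot repair this: the components of $A$ are separating bi-infinite geodesics in the $\CAT(-1)$ space $U$, so any genuine ray from $z$ asymptotic to the same boundary point as your broken path must cross $A$ as well. The avoidance of $A$ is not a cosmetic requirement --- it is exactly what makes the cut set $C$ in Step~2 disjoint from $A$ away from $\gamma'$, which in turn is what makes the complementary components convex in Step~3 --- so you cannot simply drop it. A secondary issue: your opening claim that $B(z;C_v)$ meets no lift of an attaching curve does not follow from the hypothesis, which only bounds $d_U(z,A\cap W)$; in the intended application the centre may even lie on $A$ (the wedges based at the point $o\in\gamma'$).

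The intended argument avoids all of this by never leaving the single piece containing $z$. The point $z$ lies in a lift $Y$ of (the closure of) one component of $X$ minus the attaching curves; since that component is a compact hyperbolic surface with totally geodesic boundary and non-abelian free fundamental group, $Y$ is a convex subset of $\HH^2$ whose interior is disjoint from $A$ and which is quasi-isometric, with uniform constants, to a $3$-regular tree. Using $d_U(z,A\cap W)\geq 2C_v$ and Lemma~\ref{lem:hyp-visual-angle}, each lift $\beta\subset A$ meeting $W$ subtends visual angle at most $\pi/10$ at $z$, so one can pick a ray $\gamma\in\mathcal R_z$ inside $Y$ with $\angle_z(\gamma,\gamma_W)\leq\pi/10$; the component $Y'$ of $Y\setminus B(z;K)$ containing its tail lies in $W$ for a uniform $K$, and $Y'$ is again uniformly quasi-isometric to a $3$-regular tree. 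The exponentially many rays of $A_R$, with the required pairwise divergence before radius $R$, are then read off from this tree structure inside the single piece --- no crossing of $A$, and hence no need for your multi-piece bookkeeping, occurs. Your angle and divergence estimates (the use of Lemma~\ref{lem:hyp-visual-angle}, the exponential count via branching, the $\CAT(-1)$ divergence giving $N_3(\gamma)\cap N_3(\gamma')\subseteq B(z;R)$) are in the right spirit, but as written the proposal does not prove the lemma.
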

\begin{proof}
    The point $z$ lies in a lift $Y$ of the closure of one of the components of $X$ minus the set of attaching curves. As this component has a non-abelian free group as its fundamental group and hyperbolic metric, $Y$ is isometric to a convex subset of $\HH^2$ which is quasi-isometric (with constants that may be chosen independently of $z$) to a $3$-regular tree.  

	If $\beta \subset A$ is a lift of an attaching curve, then $d(z,\beta\cap W)\geq 2C_v$.  The maximal possible visible angle from $z$ of $\beta \cap W$ is $\frac{\pi}{10}$.  Indeed the worst case is when $\beta$ meets a boundary ray of $W$ orthogonally at distance $2C_v$ from $z$, so the claim follows from Lemma~\ref{lem:hyp-visual-angle}. 
	
	Using this fact, we may choose a ray $\gamma\in R_z$ with $\gamma\subseteq Y$ satisfying $\angle_z(\gamma,\gamma_W)\leq \frac{\pi}{10}$. For some uniform constant $K$ (not depending on $W$), the connected component of $Y\setminus B(z;K)$ containing $\gamma\setminus B(z;K)$ (which we denote by $Y'$) is also contained in $W$. Now, $Y'$ is itself uniformly quasi-isometric to a $3$-regular tree and the result follows.
\end{proof}

\subsection*{Step 2: finding a centre point and building a cut set}
Recall that $Z\subset V(N)$ has cardinality $r$. We mainly consider $Z$ as a subset of $U$.

\begin{lemma}[Centrepoint lemma] There is some $z\in U$ so that for any convex subset $C$ of $U$ which does not contain $z$, we have $|Z\cap C|\leq \frac23|Z|$.
\end{lemma}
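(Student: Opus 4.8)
The plan is to establish a two-dimensional centerpoint theorem for $U$, following the classical deduction of the Euclidean centerpoint theorem from Helly's theorem but exploiting the structure of $U=\widetilde X$ as a tree of convex hyperbolic pieces. Write $r=|Z|$. The first step is a purely combinatorial reduction: it is enough to find a point
\[
 z\in\bigcap\bigl\{\,\overline{\mathrm{conv}}(S)\ :\ S\subseteq Z,\ |S|>\tfrac{2}{3}r\,\bigr\},
\]
a \emph{finite} intersection of closed convex subsets of $U$. Indeed, if $z$ lies in this intersection and $C$ is a closed convex set with $z\notin C$, then, setting $S:=Z\cap C$, we would have $z\in\overline{\mathrm{conv}}(S)\subseteq C$ unless $|S|\le\tfrac23 r$; the case of a general (in particular open) convex $C$ follows with minor modifications, for instance by choosing $z$ in the interior of the above intersection. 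To get Helly's theorem started, observe that any three members of this family satisfy $\bigl|Z\setminus(S_1\cap S_2\cap S_3)\bigr|\le\sum_{i=1}^{3}|Z\setminus S_i|<3\cdot\tfrac13 r=r$, so $\overline{\mathrm{conv}}(S_1)\cap\overline{\mathrm{conv}}(S_2)\cap\overline{\mathrm{conv}}(S_3)\supseteq Z\cap S_1\cap S_2\cap S_3\ne\emptyset$. So the lemma reduces to a Helly-type property for $U$: finitely many closed convex hulls of subsets of a fixed finite set, every three of which meet, have a common point. Some hypothesis on $U$ is essential here --- a naive Helly statement fails in infinite-dimensional $\CAT(0)$ spaces --- and it is at this point that the two-dimensionality of $U$ must be used.

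To prove this Helly-type property I would use the decomposition of $U$ dual to a tree $\mathcal T$: one has $U=\bigcup_{v\in V(\mathcal T)}P_v$, where each piece $P_v$ is isometric to a convex subset of $\HH^2$, $P_v\cap P_w$ is a single bi-infinite geodesic $\ell_{vw}$ when $vw\in E(\mathcal T)$, and $P_v\cap P_w=\emptyset$ otherwise. The relevant facts are: each $P_v$ is convex in $U$, so for a closed convex $C$ the footprint $\mathcal T_C=\{v:C\cap P_v\ne\emptyset\}$ is a subtree of $\mathcal T$ and each $C\cap P_v$ is convex in $P_v$; subtrees of a tree have the Helly property with number $2$; hyperbolically convex subsets of a single $P_v$ satisfy Helly with number $3$, via the Klein model in which they become ordinary convex subsets of $\R^2$; and, crucially, a convex subset of $U$ that meets two pieces contains the geodesic joining any point of one of the intersections to any point of the other, and this geodesic passes through every edge-geodesic on the connecting path in $\mathcal T$. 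Given closed convex $C_1,\dots,C_k$ with every triple meeting, the subtrees $\mathcal T_{C_i}$ pairwise meet and so share a vertex $v_0$, and the idea is to push the problem into $P_{v_0}$. One replaces each $C_i$ by the subset $D_i'$ of $P_{v_0}$ formed from $C_i\cap P_{v_0}$ by adjoining, along each edge-geodesic $\ell$ at $v_0$ that $C_i$ crosses, the interval $\pi_\ell\bigl(C_i\cap(\text{subtree beyond }\ell)\bigr)$ (an interval because $\pi_\ell$ is $1$-Lipschitz). The geodesic fact is tailored to ensure that the $D_i'$ are convex, that their triple intersections survive the enlargement, and that a common point of all the $D_i'$ --- possibly lying on an edge-geodesic rather than genuinely in all the $C_i$ --- can be promoted to an honest common point, passing if necessary into a convex ``fibre'' over that edge-geodesic and recursing there.

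The step I expect to be the main obstacle is precisely this transfer into the central piece. One must check that the enlargements $D_i'$ are genuinely convex; that a convex set crossing two edge-geodesics at $v_0$ contains a segment joining them inside $P_{v_0}$, so that triple intersections are not lost; and that a common point of the $D_i'$ really descends to one of the $C_i$, carrying out the dimension bookkeeping as the recursion into fibres proceeds. One also needs the supporting structural claims --- footprints are subtrees, edge-geodesics at a common vertex are disjoint, the nearest-point projection of the region beyond $\ell_{vw}$ onto $P_v$ lands in $\ell_{vw}$, and projection onto a geodesic is monotone so that convex sets have interval images --- which are not hard individually but must be assembled carefully. Throughout, the genericity hypothesis that no point of $Z$ lies on an edge-geodesic, which we may arrange since we are free to deform the hyperbolic surfaces, removes a number of degenerate cases. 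Heuristically, one is choosing $z$ to be a centerpoint, computed in $P_{v_0}\cong\R^2$ via the Klein model, of the configuration obtained by projecting $Z$ into $P_{v_0}$ along the edge-geodesics: a convex $C$ avoiding $z$ cannot capture more than $\tfrac23 r$ points of $Z$, because reaching a heavy family of hanging subtrees would force $C\cap P_{v_0}$ to contain the convex hull of a point on each of the corresponding edge-geodesics, and were that hull heavy enough it would have to contain $z$. Turning this heuristic into the quantitative statement above is the heart of the proof.
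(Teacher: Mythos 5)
Your overall route is the same as the paper's: reduce, via the pigeonhole observation that any three convex sets each containing more than $\tfrac23|Z|$ of $Z$ share a point of $Z$, to a Helly-type statement for convex subsets of $U$, and then take $z$ in the resulting intersection. The difference is how the Helly step is handled. The paper disposes of it in one line by citing Kleiner's Helly theorem for $\CAT(0)$ spaces of compact topological dimension $2$ (\cite[Proposition 5.3]{Kleiner-length-space}): any family of convex subsets of $U$, every three of which intersect, has nonempty total intersection. You instead set out to prove this Helly property by hand from the tree-of-hyperbolic-pieces structure of $U$, and this is where the genuine gap lies: the entire transfer into the central piece $P_{v_0}$ --- convexity of the enlargements $D_i'$, the claim that triple intersections survive the enlargement, the promotion of a common point of the $D_i'$ to a common point of the $C_i$, and the recursion into fibres --- is only described as a plan, and you yourself label it ``the main obstacle'' and ``the heart of the proof''. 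As written, the Helly property for $U$ is not established, so the existence of the centrepoint is not proved. (If you do not want to reconstruct a dimension-two Helly theorem from scratch, the efficient fix is exactly the paper's: quote Kleiner.)

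Two smaller points. First, your treatment of non-closed convex $C$ by ``choosing $z$ in the interior of the intersection'' can fail outright, since that intersection may have empty interior (for instance when $Z$ lies on a single geodesic); the cleaner reduction is to intersect the ordinary convex hulls $\mathrm{conv}(S)$ for $|S|>\tfrac23|Z|$, since $\mathrm{conv}(Z\cap C)\subseteq C$ for any convex $C$, at the price of needing the Helly statement for arbitrary (not necessarily closed) convex sets --- which is how the paper applies Kleiner's result. Second, your structural description of $U$ is slightly off: since a gluing may identify $\geq 2$ boundary curves, three or more pieces $P_v$ can share the same lifted attaching geodesic, so adjacent pieces do not pairwise intersect in distinct geodesics in the way your dual-tree bookkeeping assumes; this does not doom your plan, but it is one more detail the hands-on Helly argument would have to absorb.
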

\begin{proof}
	By the pigeonhole principle, the intersection of any three convex subsets of $U$ which contain strictly more than $\frac23|Z|$ of the elements in $Z$ is non-empty. Therefore, as $U$ is $\CAT(0)$ and has compact topological dimension $2$, the intersection of all convex subsets of $U$ which contain strictly more than $\frac23|Z|$ of the elements in $Z$ is non-empty \cite[Proposition 5.3]{Kleiner-length-space}. Any $z$ in this set suffices.
\end{proof}

Now we construct the cut set for $Z$. Let $\gamma$ be a geodesic from $z$ to $A$ (the length of this geodesic may be bounded independent of $z$ as the connected components of $U\setminus A$ are uniformly quasi-isometric to $3$-regular trees), denote the end of $\gamma$ on $A$ by $o$ and let $\gamma'$ be the connected component of $A$ containing the end point of $\gamma$ on $A$. Let $\gamma^+$ be a short geodesic continuation of $\gamma$ starting at $z$ and moving away from $A$ (if $z\not\in A$) or a short geodesic orthogonal to the lift of the attaching curve if $z\in A$. Let $\gamma'_0,\gamma'_1$ be short geodesics starting at $z$ making angle $\pi/4$ anticlockwise and clockwise with $\gamma_+$ respectively. Let $W_0,W_1$ be open $\pi/8$ wedges centred at $z$ with directions $\gamma'_0,\gamma'_1$. As different lifts of attaching curves are $4C_v$-separated, and $W_0\cap \gamma'=W_1\cap \gamma'=\emptyset$ by construction, $d_U(z,A\cap W_i)\geq 2C_v$ for $i=0,1$.

Applying Lemma \ref{lem:expmany} with $R=\frac{1}{\lambda}\log(\frac{1+r}{\lambda})$ we obtain two collections of $\geq r+1$ rays $A_i\subset\mathcal R_z$ where each ray in $A_i$ is contained in $W_i$ for $i=0,1$.  By the lemma and the pigeonhole principle, there is some $\gamma_i\in A_i$ such that $N_3(\gamma_i)\cap Z\subset B(z,R)$. Set $C_0=\gamma\cup\gamma_0\cup\gamma_1$. We now repeat this argument in every other connected component of $U\setminus\gamma'$ (replacing $z$ with $o$) to obtain sets $C_1,\ldots,C_k$. Define $C=\bigcup C_i$. We note that, by construction, $N_3(C)\cap Z$ is contained in $B(z,R)$. Since $C$ is a union of at most $3k+3$ geodesics of length $\leq R$, $|N_3(C)\cap Z|\leq K'R \leq K\log(1+r)$ for some constant $K=K(X)$ which is independent of $r$ and $Z$.

\begin{figure}[h]
    \centering
    \begin{tikzpicture}[xscale=0.72,yscale=0.72] 
    \draw[thick, dotted] (-5,0) -- (5,0);
        \begin{scope}
            \clip (0,0) circle (5cm);
            \draw[thick, dotted] (-5,0) -- (5,0);
            \fill[black!20] (0,1) -- (2,5) -- (5,3.8) -- (0,1);
            \fill[black!20] (0,1) -- (-2,5) -- (-5,3.8) -- (0,1);
            \fill[black!20] (0,0) -- (-1.91,-4.62) -- (-4.62,-4.62) -- (-4.62,-1.91) -- (0,0);
            \fill[black!20] (0,0) -- (1.91,-4.62) -- (4.62,-4.62) -- (4.62,-1.91) -- (0,0);
            \draw[very thick] (0,0) -- (6,-5.6);
            \draw[very thick] (0,0) -- (-3,-4.8);
            \draw[very thick] (0,1) -- (2.3,5);
            \draw[very thick] (0,1) -- (-5,5);
            \draw[thick, dotted] (0,-5) circle (1.8cm);
            \draw[thick, dotted] (0,5) circle (1.8cm);
            \draw[thick, dotted] (4.33,2.5) circle (1cm);
            \draw[thick, dotted] (-4.33,2.5) circle (1cm);
            \draw[thick, dotted] (4.33,-2.5) circle (1cm);
            \draw[thick, dotted] (-4.33,-2.5) circle (1cm);
        \end{scope}
    \draw (0,0) circle (5cm);
    \fill (0,0) circle (2pt);
    \fill (0,1) circle (2pt);
    \draw[very thick] (0,0) -- (0,1);
    \node[left] at (-5,0) {$\gamma'$};
    \node[below] at (0,-0.1) {$o$};
    \node[right] at (0,1) {$z$};
    \node[left] at (0,0.3) {$\gamma$};
    \node[] at (-2.5,3.6) {$W_0$};
    \node[] at (2.5,3.6) {$W_1$};
    \node[] at (-0.9,2) {$\gamma_0$};
    \node[] at (1,2) {$\gamma_1$};

    \draw[dashed] (0,1) circle (5mm)
                    (9,1.5) ++(110:2.5) -- (0,1.5)
                    (9,1.5) ++(-110:2.5) -- (0,0.5)
                    (9,1.5) circle (2.5cm);

    \fill[black!20] (9,1.5) -- +(22.5:2.5) arc (22.5:67.5:2.5) -- (9,1.5)
                    (9,1.5) -- +(112.5:2.5) arc (112.5:157.5:2.5) -- (9,1.5);

    \fill (9,1.5) circle (2pt);
    \draw[] (9,-1) -- (9,2.75)
            (8.85,1.35) rectangle (9.15,1.5);
            (9,1.5) -- +(45:1.25)
            (9,1.5) -- +(135:1.25)
            (9.5,1.5) arc (0:180.5:0.5);
    \foreach \j in {0,1,2,3,4,5,6,7,8}{
    \draw[] (9,1.5) -- +(22.5*\j:0.7);
    }

\begin{scope}[xshift=9cm, yshift=1.5cm]
    \node[below right] at (0,0) {$z$};
    \node[] at (45:1.7) {$\gamma'_1$};
    \node[] at (90:1.7) {$\gamma^+$};
    \node[] at (135:1.7) {$\gamma'_0$};

    \foreach \i in {1,3,5,7,9,11,13,15}{
    \node[] at (11.25*\i:0.7) {$\ast$};
    }
\end{scope}    

    \node[] at (9,-1.5) {$\ast=\frac\pi8$};
                    
    \end{tikzpicture}
	\caption{Constructing $C$ (the bold lines) in two of the components of $U\setminus\gamma'$}
    \label{fig:logcutset}
\end{figure}
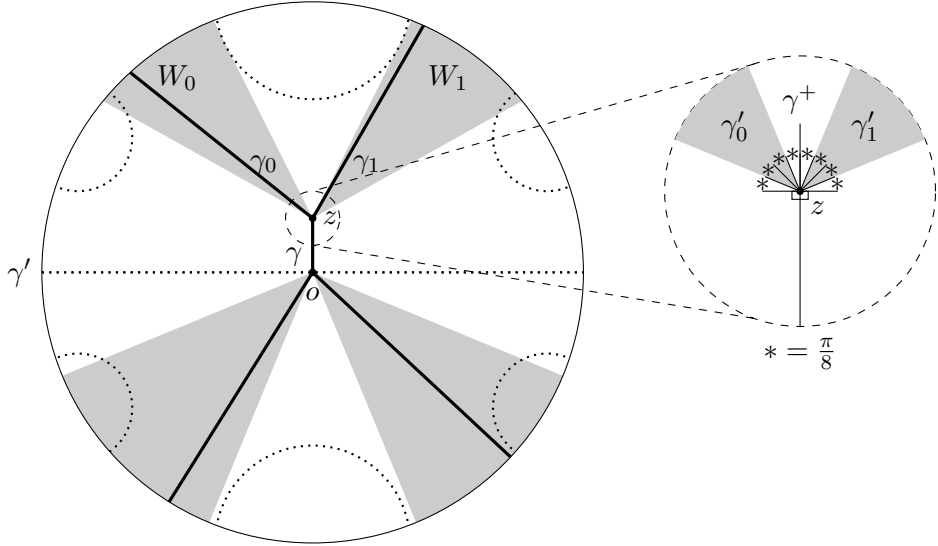

\subsection*{Step 3: components of $U\setminus C$ are convex}

Let $Y$ be a connected component of $U\setminus C$ and let $x,x'\in Y$. By considering the connected component(s) of $Y\setminus \gamma'$ containing $x$ and $x'$, we find a subset $Y'$ of $Y$ of one of the three forms shown in Figure \ref{fig:compsU-C}. 

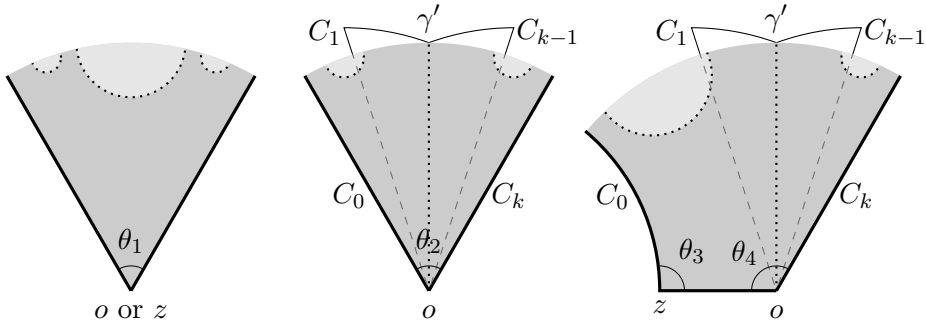
\begin{figure}[h]
    \centering
    \begin{tikzpicture}[xscale=0.66, yscale=0.66]
    \fill[black!20] (0,0) -- (120:5cm) arc (120:60:5cm) -- (0,0);
    \draw[very thick] (0,0) -- (120:5cm);
    \draw[very thick] (0,0) -- (60:5cm);
    \node[below] at (0,-0.1) {$o$ or $z$};
    \begin{scope}
    \clip (0,0) -- (120:5cm) arc (120:60:5cm) -- (0,0);
    \filldraw[dotted, thick, fill=black!10] (0,5) circle (1.1cm);
    \filldraw[dotted, thick, fill=black!10] (1.71,4.70) circle (0.3cm);
    \filldraw[dotted, thick, fill=black!10] (-1.71,4.70) circle (0.3cm);    
    \end{scope}
    \draw[very thin] (120:0.5) arc (120:60:0.5);
    \node[] at (0,1) {$\theta_1$};

    \begin{scope}[xshift=6cm]
    \draw[] (0,5) arc (70:90:5cm) -- (0,0);
    \draw[] (0,5) arc (110:90:5cm) -- (0,0);
    \fill[black!20] (0,0) -- (120:5cm) arc (120:60:5cm) -- (0,0);
    \draw[very thick] (0,0) -- (120:5cm);
    \draw[very thick] (0,0) -- (60:5cm);
    \node[below] at (0,-0.1) {$o$};
    \begin{scope}
    \clip (0,0) -- (120:5cm) arc (120:60:5cm) -- (0,0);
    \draw[dotted, thick] (0,0) -- (0,5);
    \filldraw[dotted, thick, fill=black!10] (1.71,4.70) circle (0.4cm);
    \filldraw[dotted, thick, fill=black!10] (-1.71,4.70) circle (0.4cm);
    \draw[dashed, very thin, black!60] (0,5) arc (70:90:5cm) -- (0,0);
    \draw[dashed, very thin, black!60] (0,5) arc (110:90:5cm) -- (0,0);
    \end{scope}
    \node[above] at (0,5) {$\gamma'$};
    \node[] at (130:2.5) {$C_0$};
    \node[] at (-2.1,5.2) {$C_1$};
    \node[] at (2.4,5.2) {$C_{k-1}$};
    \node[] at (50:2.5) {$C_k$};
    \draw[very thin] (120:0.5) arc (120:60:0.5);
    \node[] at (0,1) {$\theta_2$};
    \end{scope}

    \begin{scope}[xshift=13cm, xscale=-1]
    \draw[] (0,5) arc (70:90:5cm) -- (0,0);
    \draw[] (0,5) arc (110:90:5cm) -- (0,0);
    \fill[black!20] (0,0) -- (120:5cm) arc (120:40:5cm) arc (130:180:4.2cm) -- (0,0);
    \draw[very thick] (0,0) -- (120:5cm);
    \draw[very thick] (0,0) -- (2.35,0) arc (180:130:4.2cm);
    \node[below] at (0,-0.1) {$o$};
    \begin{scope}
    \clip (0,0) -- (120:5cm) arc (120:40:5cm) arc (130:180:4.2cm) -- (0,0);
    \draw[dotted, thick] (0,0) -- (0,5);
    \filldraw[dotted, thick, fill=black!10] (2.5,4.33) circle (1.2cm);
    \filldraw[dotted, thick, fill=black!10] (-1.71,4.70) circle (0.4cm);    
    \draw[dashed, very thin, black!60] (0,5) arc (70:90:5cm) -- (0,0);
    \draw[dashed, very thin, black!60] (0,5) arc (110:90:5cm) -- (0,0);
    \end{scope}
    \node[above] at (0,5) {$\gamma'$};
    \node[] at (3.3,1.9) {$C_0$};
    \node[] at (2.1,5.2) {$C_1$};
    \node[] at (-2.4,5.2) {$C_{k-1}$};
    \node[] at (130:2.5) {$C_k$};
    \node[below] at (2.35,0) {$z$};
    \draw[very thin] (120:0.5) arc (120:0:0.5);
    \node[] at (50:1) {$\theta_4$};
    \draw[very thin] (1.85,0) arc (180:90:0.5);
    \node[] at (1.7,0.8) {$\theta_3$};
    \end{scope}
    \end{tikzpicture}
    \caption{Three types of component of $U\setminus C$, with $Y'\setminus Y''$ highlighted in pale grey and $Y''$ in a darker grey}
    \label{fig:compsU-C}
\end{figure}

It suffices to show that each such set is convex. Let $x,x'\in Y'$ and define $Y''$ to be the connected component of $Y'\setminus (A\setminus\gamma')$ with $z$ and/or $o$ in its boundary. Note that $Y''$ is convex as it is isometric to an intersection of halfspaces in $\HH^2$, this relies on the fact that all the internal angles (marked as $\theta_1,\ldots,\theta_4$ in Figure \ref{fig:compsU-C}) are between $\frac\pi4$ and $\frac{7\pi}8$. If $[x,x']$ does not intersect $Y''$ then it is contained in $Y$ as $Y''$ separates $Y\setminus Y''$ from $U\setminus Y$. If it does intersect $Y''$, then as $Y''$ is convex, $[x,x']\setminus (Y'\setminus Y'')$ is contained in $Y''$. As $Y'\cup Y''\subset Y$, $[x,x']\subset Y$, so $Y$ is convex.

\subsection*{Step 4: completing the proof}

We claim that $N_3(C)\cap Z \subset V(N)$ is a $\frac23$-cutset of $Z$ (considered as a subgraph of $N$); that is, every connected component of $Z \setminus (N_3(C)\cap Z) \subset Z$ has $\leq \frac{2}{3}|Z|$ points in each connected component.

If $z',z''\in Z$ are in different connected components of $U\setminus N_3(C)$, then any path $z'=z_1,\ldots,z_m=z''$ must contain a point in $N_3(C)$, as $d_U(z_i,z_{i+1})\leq 3$ for every edge $z_iz_{i+1}$ in $N$. Therefore, each connected component of $Z\setminus N_3(C)$ is contained in a connected component of $U\setminus C$, so by Helly's lemma, contains at most $\frac23|Z|$ vertices. We also have $|N_3(C)\cap Z|\leq K\log(1+r)$ by Step 2. As these bounds are independent of $Z$ and $r$, we see that
\[
	\sep^{\frac23}_N(r) \leq K\log(1+r)
\]
as required. \qed

\bibliographystyle{alpha}
\bibliography{biblio}

@Preamble{
"\def\cprime{$'$} "
}

@Article{BenSchTim-12-separation-graphs,
  Title                    = {On the separation profile of infinite graphs},
  Author                   = {Benjamini, Itai and Schramm, Oded and Tim{\'a}r, {\'A}dam},
  Journal                  = {Groups Geom. Dyn.},
  Year                     = {2012},
  Number                   = {4},
  Pages                    = {639--658},
  Volume                   = {6},

  Fjournal                 = {Groups, Geometry, and Dynamics}
}

@Article{BK-05-cdim-cannon,
  Title                    = {Conformal dimension and {G}romov hyperbolic groups with
 2-sphere boundary},
  Author                   = {Bonk, M. and Kleiner, B.},
  Journal                  = {Geom. Topol.},
  Year                     = {2005},
  Pages                    = {219--246 (electronic)},
  Volume                   = {9},

  Fjournal                 = {Geometry and Topology},
  ISSN                     = {1465-3060},
  Mrclass                  = {20F67 (30C65)}
}

@Article{BonkSchramm,
  Title                    = {Embeddings of {G}romov hyperbolic spaces},
  Author                   = {Bonk, M. and Schramm, O.},
  Journal                  = {Geom. Funct. Anal.},
  Year                     = {2000},
  Number                   = {2},
  Pages                    = {266--306},
  Volume                   = {10},

  Fjournal                 = {Geometric and Functional Analysis},
  ISSN                     = {1016-443X}
}

@InCollection{Gromov93,
  Title                    = {Asymptotic invariants of infinite groups},
  Author                   = {Gromov, M.},
  Booktitle                = {Geometric group theory, {V}ol.\ 2 ({S}ussex, 1991)},
  Publisher                = {Cambridge Univ. Press},
  Year                     = {1993},

  Address                  = {Cambridge},
  Pages                    = {1--295},
  Series                   = {London Math. Soc. Lecture Note Ser.},
  Volume                   = {182},

  Mrclass                  = {20F32 (57M07)},
  Mrnumber                 = {MR1253544 (95m:20041)}
}

@Article{HumeMackTess-Pprof,
    AUTHOR = {Hume, David and Mackay, John M. and Tessera, Romain},
     TITLE = {Poincar\'{e} profiles of groups and spaces},
   JOURNAL = {Rev. Mat. Iberoam.},
  FJOURNAL = {Revista Matem\'{a}tica Iberoamericana},
    VOLUME = {36},
      YEAR = {2020},
    NUMBER = {6},
     PAGES = {1835--1886},
      ISSN = {0213-2230},
DOI = {10.4171/RMI/1184},
}

@Article{HumeMackTess-PprofLie,
	author  = {Hume, David and Mackay, John~M. and Tessera, Romain},
	title   = {Poincar\'e profiles of {L}ie groups and a coarse geometric dichotomy},
	JOURNAL = {Geom. Funct. Anal.},
	FJOURNAL = {Geometric and Functional Analysis},
VOLUME = {32},
YEAR = {2022},
PAGES = {1063--1133},
URL = {https://doi.org/10.1007/s00039-022-00617-4},
}

@Article{Kleiner-length-space,
    Title       =   {The local structure of length spaces with curvature bounded above}, 
    Author      =   {Kleiner, B.},
    Journal     =   {Math Z.},
    Year        =   {1999},
    Volume      =   {231},
    Pages       =   {409–456}
}

@Article{Mac-12-random-cdim,
  Title                    = {Conformal dimension and random groups},
  Author                   = {Mackay, J. M.},
  Journal                  = {Geom. Funct. Anal.},
  Year                     = {2012},
  Number                   = {1},
  Pages                    = {213--239},
  Volume                   = {22},

  Coden                    = {GFANFB},
  Doi                      = {10.1007/s00039-012-0153-z},
  Fjournal                 = {Geometric and Functional Analysis},
  ISSN                     = {1016-443X},
  Mrclass                  = {20F65 (20F67 57Mxx)},
  Mrnumber                 = {2899687},
  Url                      = {http://dx.doi.org/10.1007/s00039-012-0153-z}
}

@Article{Mac-10-confdim,
  Title                    = {Spaces and groups with conformal dimension greater than one},
  Author                   = {Mackay, J. M.},
  Journal                  = {Duke. Math. J.},
  Year                     = {2010},
  Number                   = {2},
  Pages                    = {211--227},
  Volume                   = {153},

  Fjournal                 = {Duke Mathematical Journal},
  ISSN                     = {0012-7094}
}

@Book{Mac-Tys-cdimexpo,
  Title                    = {Conformal dimension: theory and application},
  Author                   = {Mackay, J. M. and Tyson, J. T.},
  Publisher                = {American Mathematical Society},
  Year                     = {2010},

  Address                  = {Providence, RI},
  Series                   = {University Lecture Series},
  Volume                   = {54},

  Pages                    = {143}
}

@Book{Oll-05-rand-grp-survey,
  Title                    = {A {J}anuary 2005 invitation to random groups},
  Author                   = {Ollivier, Y.},
  Publisher                = {Sociedade Brasileira de Matem\'atica},
  Year                     = {2005},

  Address                  = {Rio de Janeiro},
  Series                   = {Ensaios Matem\'aticos [Mathematical Surveys]},
  Volume                   = {10},

  ISBN                     = {85-85818-30-1},
  Mrclass                  = {20F65 (20-02 20F06 20F67 20P05 60C05)},
  Pages                    = {ii+100}
}

@Article{Pan-89-cdim,
  Title                    = {Dimension conforme et sph\`ere \`a l'infini des vari\'et\'es
 \`a courbure n\'egative},
  Author                   = {Pansu, P.},
  Journal                  = {Ann. Acad. Sci. Fenn. Ser. A I Math.},
  Year                     = {1989},
  Number                   = {2},
  Pages                    = {177--212},
  Volume                   = {14},

  Coden                    = {AAFMAT},
  Fjournal                 = {Annales Academiae Scientiarum Fennicae. Series A I.
 Mathematica},
  ISSN                     = {0066-1953},
  Mrclass                  = {53C20 (30C60)}
}

@Unpublished{GladShum,
  author = {Valeriia Gladkova and Verna Shum},
  title  = {Separation profiles of graphs of fractals},
  note   = {To appear in J. Top. Anal.},
  year   = {2018},
}

@article {Mac-16-confdim-subcplxs,
    AUTHOR = {Mackay, John M.},
     TITLE = {Conformal dimension via subcomplexes for small cancellation
              and random groups},
   JOURNAL = {Math. Ann.},
  FJOURNAL = {Mathematische Annalen},
    VOLUME = {364},
      YEAR = {2016},
    NUMBER = {3-4},
     PAGES = {937--982},
      ISSN = {0025-5831,1432-1807},
   MRCLASS = {20F65 (20F06 20F67 20P05 30F45 30L10 57M20)},
       DOI = {10.1007/s00208-015-1234-8},
       URL = {https://doi.org/10.1007/s00208-015-1234-8},
}

@article{Frost-22-confdim-random,
      title={Round Trees and Conformal Dimension in Random Groups: low density to high density}, 
      author={Jordan Frost},
      year={2022},
	  howpublished={arXiv preprint},
	  note={arXiv:2204.05165}
}

@misc{Field-Gupta-Lyman-Stark-25-coxeter-confdim,
Author = {Elizabeth Field and Radhika Gupta and Robert Alonzo Lyman and Emily Stark},
Title = {Conformal dimension bounds for certain {C}oxeter group {B}owditch boundaries},
Year = {2025},
howpublished = {arXiv preprint},
note = {arXiv:2504.12404},
}

@misc{Stark-25-round-tree-survey,
Author = {Emily Stark},
Title = {Visual metrics on boundaries of hyperbolic spaces},
Year = {2025},
howpublished={arXiv preprint},
note = {arXiv:2506.10108},
}

@misc{Cashen-Dani-Schreve-Stark-25-RACG-confdim,
      title={Conformal dimension bounds, Pontryagin sphere boundaries, and algebraic fibering of right-angled Coxeter groups}, 
      author={Christopher H. Cashen and Pallavi Dani and Kevin Schreve and Emily Stark},
      year={2025},
	  howpublished={arXiv preprint},
	  note={arXiv:2510.03430}
}

@misc{BenGenTes,
      title={Coarse separation in hyperbolic groups and right-angled Artin groups},
      howpublished={In preparation}, 
      author={Oussama Bensaid and Anthony Genevois and Romain Tessera},
      year={2025}
}

@article {Lazarovich-LeCoz-25,
    AUTHOR = {Lazarovich, Nir and Le Coz, Corentin},
     TITLE = {Hyperbolic groups with logarithmic separation profile},
   JOURNAL = {Algebr. Geom. Topol.},
  FJOURNAL = {Algebraic \& Geometric Topology},
    VOLUME = {25},
      YEAR = {2025},
    NUMBER = {1},
     PAGES = {39--54},
      ISSN = {1472-2747,1472-2739},
   MRCLASS = {20F67 (20E06 20F65 20F69 57K20)},
  MRNUMBER = {4877250},
       DOI = {10.2140/agt.2025.25.39},
       URL = {https://doi.org/10.2140/agt.2025.25.39},
}

@article {Oppenheim-23-FLp-rand-gromov-model,
    AUTHOR = {Oppenheim, Izhar},
     TITLE = {\.{Z}uk's criterion for {B}anach spaces and random groups},
   JOURNAL = {Forum Math. Sigma},
  FJOURNAL = {Forum of Mathematics. Sigma},
    VOLUME = {11},
      YEAR = {2023},
     PAGES = {Paper No. e79, 32},
      ISSN = {2050-5094},
   MRCLASS = {20F65 (20P05)},
  MRNUMBER = {4642015},
MRREVIEWER = {Daniele\ Ettore\ Otera},
       DOI = {10.1017/fms.2023.80},
       URL = {https://doi.org/10.1017/fms.2023.80},
}

@article {CarMac-22-confdim1,
    AUTHOR = {Carrasco, Matias and Mackay, John M.},
     TITLE = {Conformal dimension of hyperbolic groups that split over
              elementary subgroups},
   JOURNAL = {Invent. Math.},
  FJOURNAL = {Inventiones Mathematicae},
    VOLUME = {227},
      YEAR = {2022},
    NUMBER = {2},
     PAGES = {795--854},
      ISSN = {0020-9910,1432-1297},
   MRCLASS = {20F67 (30L10 51F99)},
  MRNUMBER = {4372224},
       DOI = {10.1007/s00222-021-01074-w},
       URL = {https://doi.org/10.1007/s00222-021-01074-w},
}

@article {Dymarz-10-diagonal-Tukia,
    AUTHOR = {Dymarz, Tullia},
     TITLE = {Large scale geometry of certain solvable groups},
   JOURNAL = {Geom. Funct. Anal.},
  FJOURNAL = {Geometric and Functional Analysis},
    VOLUME = {19},
      YEAR = {2010},
    NUMBER = {6},
     PAGES = {1650--1687},
      ISSN = {1016-443X,1420-8970},
   MRCLASS = {20F65 (22E40 30L10)},
  MRNUMBER = {2594617},
MRREVIEWER = {Martin\ Deraux},
       DOI = {10.1007/s00039-010-0046-y},
       URL = {https://doi.org/10.1007/s00039-010-0046-y},
}

@article {Mattila-Saaranen-09-existence-AR-cantor-subsets,
    AUTHOR = {Mattila, Pertti and Saaranen, Pirjo},
     TITLE = {Ahlfors-{D}avid regular sets and bilipschitz maps},
   JOURNAL = {Ann. Acad. Sci. Fenn. Math.},
  FJOURNAL = {Annales Academi\ae\ Scientiarum Fennic\ae. Mathematica},
    VOLUME = {34},
      YEAR = {2009},
    NUMBER = {2},
     PAGES = {487--502},
      ISSN = {1239-629X,1798-2383},
   MRCLASS = {28A75},
  MRNUMBER = {2553808},
MRREVIEWER = {Raanan\ Schul},
}

@article {GuirardelLevitt-17-JSJ,
    AUTHOR = {Guirardel, Vincent and Levitt, Gilbert},
     TITLE = {J{SJ} decompositions of groups},
   JOURNAL = {Ast\'erisque},
  FJOURNAL = {Ast\'erisque},
    NUMBER = {395},
      YEAR = {2017},
     PAGES = {vii+165},
      ISSN = {0303-1179,2492-5926},
      ISBN = {978-2-85629-870-1},
   MRCLASS = {20E08 (20E06 20E34 20F65 20F67 57M07)},
}

@article {ShepWood-22-QI-rigid-vfree-amalgams,
    AUTHOR = {Shepherd, Sam and Woodhouse, Daniel J.},
     TITLE = {Quasi-isometric rigidity for graphs of virtually free groups
              with two-ended edge groups},
   JOURNAL = {J. Reine Angew. Math.},
  FJOURNAL = {Journal f\"ur die Reine und Angewandte Mathematik. [Crelle's
              Journal]},
    VOLUME = {782},
      YEAR = {2022},
     PAGES = {121--173},
      ISSN = {0075-4102,1435-5345},
   MRCLASS = {20F65 (05E16 20E06 20F67)},
  MRNUMBER = {4360010},
       DOI = {10.1515/crelle-2021-0067},
       URL = {https://doi.org/10.1515/crelle-2021-0067},
}

@article {Carrasco14-WS-hyp-confdim1,
    AUTHOR = {Carrasco Piaggio, Matias},
     TITLE = {Conformal dimension and canonical splittings of hyperbolic
              groups},
   JOURNAL = {Geom. Funct. Anal.},
  FJOURNAL = {Geometric and Functional Analysis},
    VOLUME = {24},
      YEAR = {2014},
    NUMBER = {3},
     PAGES = {922--945},
      ISSN = {1016-443X,1420-8970},
   MRCLASS = {30L10 (20F67 28A78 30F40 57M07)},
  MRNUMBER = {3213834},
       DOI = {10.1007/s00039-014-0282-7},
       URL = {https://doi.org/10.1007/s00039-014-0282-7},
}

\end{document}